\numberwithin{equation}{section}
\newtheorem{thm}{Theorem}[section]
\newtheorem{prop}[thm]{Proposition}
\newtheorem{lem}[thm]{Lemma}
\newtheorem{cor}[thm]{Corollary}
\newtheorem{conj}[thm]{Conjecture}
\theoremstyle{definition}
\newtheorem{defn}[thm]{Definition}
\newtheorem{rem}[thm]{Remark}
\newcommand{\R}{\mathbb R}
\newcommand{\Z}{{\mathbb Z}}
\newcommand{\N}{{\mathbb N}}
\newcommand{\C}{{\mathbb C}}
\newcommand{\Q}{{\mathbb Q}}
\newcommand{\F}{{\mathbb F}}
\DeclareMathOperator{\wt}{wt}
\DeclareMathOperator{\len}{len}
\newcommand{\kk}{{\bf k}}
\newcommand{\el}{{\bf l}}
\newcommand{\qsh}{\shuffle_\hbar}
\newcommand{\MTWA}{\mathcal{Z}^{\mathcal{A},\omega}}
\newcommand{\MTWS}{\mathcal{Z}^{\mathcal{S},\omega}}
\newcommand{\MTWB}{\mathcal{Z}^{\bullet,\omega}}
\newcommand{\MTW}{\mathcal{Z}^{\omega}}
\title{Finite and symmetric Mordell-Tornheim \\
	multiple zeta values}
\author[H.~Bachmann]{Henrik Bachmann}
\address[H.~Bachmann]{Graduate School of Mathematics\\ Nagoya University\\
	Nagoya, Aichi 464-8602\\
	Japan}
\email{henrik.bachmann@math.nagoya-u.ac.jp}
\author[Y.~Takeyama]{Yoshihiro Takeyama}
\address[Y.~Takeyama]{Division of Mathematics\\
	Faculty of Pure and Applied Sciences\\
	University of Tsukuba\\
	Tsukuba, Ibaraki 305-8571\\
	Japan}
\email{takeyama@math.tsukuba.ac.jp}
\author[K.~Tasaka]{Koji Tasaka}
\address[K.~Tasaka]{Department of Information Science and Technology\\
	Aichi Prefectural University\\
	Nagakute-city, Aichi 480-1198\\
	Japan
}
\email{tasaka@ist.aichi-pu.ac.jp}
\subjclass{11M32, 11R18, 05A30}
\keywords{Kaneko--Zagier conjecture (finite multiple zeta value and symmetric multiple zeta value), Mordell-Tornheim multiple zeta values, Mordell-Tornheim-Witten ($q$-)multiple zeta values}
\begin{document}

	\maketitle

	\begin{abstract}
		We introduce finite and symmetric Mordell-Tornheim type of multiple zeta values and give a new approach to the Kaneko-Zagier conjecture stating that the finite and symmetric multiple zeta values satisfy the same relations.
	\end{abstract}


	\section{Introduction and Main results}


	In \cite{BTT18} the authors described a new approach to study the relationship between finite and symmetric multiple zeta values. This was done by viewing these values as an algebraic and analytic limit of certain $q$-series evaluated at roots of unity. The purpose of this note is to use this approach to introduce finite and symmetric Mordell-Tornheim multiple zeta values and to give an analogue of the Kaneko-Zagier conjecture, which gives a surprising relationship between these values.

	\subsection{Notations}
	In this work, we denote by $\mathbb{F}_p$ the finite field with $p$ elements and by $\N$ the set of positive integers.
	We call a tuple $\kk=(k_1,\ldots,k_r)$ of positive integers an \textit{index},
	$\wt(\kk)=k_1+\cdots+k_r$ the \textit{weight} and
	$\len(\kk)=r$ the \textit{length}.
	An index $\kk=(k_1,\ldots,k_r)$ satisfying $k_1\ge2$ is said to be \textit{admissible}.
	We regard the empty index $\emptyset$ as admissible, and let $\wt(\emptyset)=\len(\emptyset)=0$.
	We set $F(\emptyset)$ to be a unit element for any function $F$ on indices.

	\subsection{A review: the Kaneko-Zagier conjecture}
	Kaneko and Zagier \cite{KanekoZagier} introduced the finite multiple zeta value $\zeta_{\mathcal{A}}(\kk)$
	for an index $\kk=(k_1,\ldots,k_r)$ as an element in the ring
	$\mathcal{A}=\big(\prod_{p} \mathbb{F}_p\big) \big/ \big(\bigoplus_{p} \mathbb{F}_p\big) $
	with $p$ running over all prime (see Definition \ref{def:fmzv}).
	Since the ring $\mathcal{A}$ forms a $\Q$-algebra,
	one can consider the $\Q$-vector subspace $\mathcal{Z}_k^{\mathcal{A}}$ of $\mathcal{A}$
	spanned by all finite multiple zeta values of weight $k$. The identity
	$\dim_\Q \mathcal{Z}^{\mathcal{A}}_k\stackrel{?}{=} \dim_\Q \mathcal{Z}_k-\dim_\Q \mathcal{Z}_{k-2}$
	is numerically observed by Zagier,
	where $\mathcal{Z}_k$ denotes the $\Q$-vector space spanned by multiple zeta values of weight $k$.
	Here the multiple zeta value is defined for an admissible index $\kk=(k_1,\ldots,k_r)$ by
	\[\zeta(\kk) := \sum_{m_1>\cdots >m_r>0} \frac{1}{m_1^{k_1}\cdots m_r^{k_r}}.\]
	Inspired by Kontsevich's idea, Kaneko and Zagier are led into a real counterpart of the finite multiple zeta value, called the symmetric multiple zeta value and denoted by $\zeta_{\mathcal{S}}(\kk)$ (see Definition \ref{def:smzv}), in the quotient $\Q$-algebra $\mathcal{Z}/\zeta(2)\mathcal{Z}$, where $\mathcal{Z}=\sum_{k\ge0} \mathcal{Z}_k$.
	They observed that finite and symmetric multiple zeta values satisfy the same relations over $\Q$, and then proposed a one-to-one correspondence between them, which we call the Kaneko-Zagier conjecture (see Conjecture \ref{conj:Kaneko-Zagier}).

	\subsection{Finite Mordell-Tornheim multiple zeta value}
	In this paper, we wish to first mimic the above story replacing the multiple harmonic sum $H_n(\kk)$ defined in \eqref{eq:harmonic_sum} with the rational number $\omega_n(\kk)\in \Q$ defined for an index $\kk=(k_1,\ldots,k_r)$ and $n\in \N$ by
	\[\omega_n(\kk) := \sum_{\substack{m_1+\cdots+m_r=n\\m_1,\ldots,m_r>0}} \prod_{a=1}^r \frac{1}{m_a^{k_a}} ,\]
	and to then propose an alternative approach to the Kaneko-Zagier conjecture.
	Here the notion of $\omega_n(\kk)$ is originated from a certain truncation of generalized Mordell-Tornheim-Witten sums
\begin{align*}
\omega(\kk| \el) = \sum_{\substack{m_1+\cdots+m_r=n_1+\cdots+n_s\\ m_1,\ldots,m_r>0\\n_1,\ldots,n_s>0}}
\prod_{a=1}^r \frac{1}{m_a^{k_a}}\prod_{b=1}^s \frac{1}{n_b^{l_b}}
\end{align*}
studied by \cite{BaileyBorweinBorwein,BaileyBorwein1,BaileyBorweinCrandall,BaileyBorwein2}.
We start by defining a finite analogue of these values.

	\begin{defn}
		For an index $\kk=(k_1,\ldots,k_r)$ with $r\ge2$, we define the \emph{finite multiple omega value} $\omega_{\mathcal{A}}(\kk)$ by
		\[ \omega_{\mathcal{A}}(\kk) := \left(\omega_p(\kk)  \mod p \right)_p \in \mathcal{A}.\]
	\end{defn}

	Denote by $\MTWA_k$ the $\Q$-vector space spanned by all finite multiple omega values of weight $k$.
	Using PARI-GP \cite{PARI}, we numerically computed the dimension of $\MTWA_k$.
	For comparison, it is listed together with the conjectural dimension of $\mathcal{Z}_k$ as follows.
	\begin{center}
		\begin{tabular}{c|ccccccccccccccccccccccccc}
			$ k$&1&2&3&4&5&6&7&8 &9&10&11&12\\ \hline
			$\dim \MTWA_k$ & 0 & 0 & 1 & 0 & 1 & 1 & 1 & 2 & 2 & 3 & 4 & 5\\ \hline
			$\dim \mathcal{Z}_{k}$ & 0 & 1 & 1 & 1 & 2 & 2 & 3 & 4 & 5 & 7 & 9 & 12\\
		\end{tabular}
	\end{center}
	Similarly to the finite multiple zeta value, the equality $\dim_\Q \MTWA_k\stackrel{?}{=} \dim_\Q \mathcal{Z}_k-\dim_\Q \mathcal{Z}_{k-2}$ up to $k= 12$ is observed.
	Accordingly, we may expect a real counterpart $\omega_{\mathcal{S}}(\kk)$ of $\omega_{\mathcal{A}}(\kk)$ in $\mathcal{Z}/\zeta(2)\mathcal{Z}$.

	\subsection{Symmetric Mordell-Tornheim multiple zeta value}

	In order to find a proper definition of $\omega_{\mathcal{S}}(\kk)$, we use the same framework with as used in our previous work \cite{BTT18}.
	A crucial feature of this work was that finite and symmetric multiple zeta values are obtained from an algebraic and analytic operation for multiple harmonic $q$-series $z(\kk;q)$ defined in  \eqref{eq:def_znq}, which is a $q$-analogue of $H_n(\kk)$, at primitive roots of unity (see Theorem \ref{thm:BTT}).
	The Kaneko-Zagier conjecture then turns out to be the statement of a relationship between `values' of $z(\kk;q)$ at $q=\zeta_n$ with $\zeta_n$ being a primitive $n$-th root of unity.

	Let us begin with the $q$-analogue of $\omega_n(\kk)$.
	For an index $\kk=(k_1,\ldots,k_r)$, let
	\begin{equation}\label{eq:def_qMTW}
	\omega_n(\kk;q):= \sum_{\substack{m_1+\cdots+m_r=n\\m_1,\ldots,m_r>0}} \prod_{a=1}^r \frac{q^{(k_a-1)m_a}}{[m_a]^{k_a}} ,
	\end{equation}
	where $[m]=1+q+\cdots+q^{m-1}=\frac{1-q^m}{1-q}$ denotes the usual $q$-integer.
	By definition, it follows that $\omega_n(k_{\sigma(1)},\ldots,k_{\sigma (r)};q)= \omega_n(k_1,\ldots,k_r;q)$ for any permutation $\sigma\in \mathfrak{S}_r$.
	If $\len(\kk)\ge2$, one can consider the value $\omega_n(\kk;\zeta_n)$ in the cyclotomic field $\Q(\zeta_n)$ at $q=\zeta_n$ a primitive $n$-th root of unity (note that $\omega_n(\kk;\zeta_n)$ is not well-defined if $\zeta_n$ is not primitive).
	Its connection with the finite multiple omega value is as follows.

	\begin{thm}\label{thm:FMZV}
		Let $\zeta_p$ be a primitive $p$-th root of unity.
		Under the identification $\Z[\zeta_p]/(1-\zeta_p)\Z[\zeta_p]= \F_p$ with $p$ being prime, for any index $\kk$ with $\len(\kk)\ge2$ we have
		\[ \left(\omega_p(\kk;\zeta_p) \mod (1-\zeta_p)\Z[\zeta_p]\right)_p = \omega_{\mathcal{A}}(\kk) \in \mathcal{A}.\]
	\end{thm}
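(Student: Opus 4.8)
The plan is to prove the stated equality in $\mathcal{A}$ componentwise, i.e.\ to show for every prime $p$ the congruence
\[
\omega_p(\kk;\zeta_p) \equiv \omega_p(\kk) \pmod{(1-\zeta_p)\Z[\zeta_p]},
\]
where the right-hand side is read in $\Z[\zeta_p]/(1-\zeta_p)\Z[\zeta_p]=\F_p$. First I would record the two elementary reductions coming from the identification $\zeta_p\equiv 1$ modulo the prime $\mathfrak p:=(1-\zeta_p)$: on one hand $\zeta_p^{(k_a-1)m_a}\equiv 1$, so every factor $q^{(k_a-1)m_a}$ in \eqref{eq:def_qMTW} becomes trivial; on the other hand, for $1\le m\le p-1$ one has $[m]\big|_{q=\zeta_p}=1+\zeta_p+\cdots+\zeta_p^{m-1}\equiv m \pmod{\mathfrak p}$.

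The key point is the legitimacy of reducing the fractions $1/[m_a]^{k_a}$ modulo $\mathfrak p$. Here the hypothesis $\len(\kk)\ge2$ is essential: since $m_1+\cdots+m_r=p$ with $r\ge2$ and all $m_a>0$, each part satisfies $1\le m_a\le p-1$. For such $m$ I would show that $[m]\big|_{q=\zeta_p}=\frac{1-\zeta_p^{m}}{1-\zeta_p}$ is a \emph{unit} in $\Z[\zeta_p]$: the Galois automorphism $\zeta_p\mapsto\zeta_p^{m}$ carries $1-\zeta_p$ to $1-\zeta_p^{m}$, so these two generators of the unique prime above $p$ are associates and their quotient is a unit. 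Consequently $\omega_p(\kk;\zeta_p)$ lies in the localization $\Z[\zeta_p]_{\mathfrak p}$ and may be reduced modulo $\mathfrak p$; the reduction of the unit $1/[m_a]^{k_a}$ is exactly the inverse $m_a^{-k_a}$ in $\F_p$ (note $m_a$ is invertible mod $p$ precisely because $1\le m_a\le p-1$).

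Combining the two reductions, each summand of $\omega_p(\kk;\zeta_p)$ satisfies
\[
\prod_{a=1}^r \frac{\zeta_p^{(k_a-1)m_a}}{[m_a]^{k_a}}\equiv \prod_{a=1}^r\frac{1}{m_a^{k_a}}\pmod{\mathfrak p},
\]
and summing over all compositions $m_1+\cdots+m_r=p$ with $m_a>0$ gives $\omega_p(\kk;\zeta_p)\equiv\omega_p(\kk)\pmod{\mathfrak p}$, which is the desired componentwise identity; assembling over all primes $p$ yields the equality in $\mathcal{A}$.

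I do not foresee a serious obstacle beyond the bookkeeping of well-definedness: the one genuine content is the unit property of $[m]\big|_{q=\zeta_p}$ for $1\le m\le p-1$, which both guarantees that $\omega_p(\kk;\zeta_p)$ is $\mathfrak p$-integral and explains why the condition $\len(\kk)\ge2$ (equivalently $m_a<p$ for all $a$) cannot be dropped.
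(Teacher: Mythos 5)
Your proposal is correct and follows essentially the same route as the paper's proof: both reduce each factor $q^{(k_a-1)m_a}/[m_a]^{k_a}\big|_{q=\zeta_p}$ to $m_a^{-k_a}$ modulo $(1-\zeta_p)\Z[\zeta_p]$, the key point being that $[m]\big|_{q=\zeta_p}$ is a cyclotomic unit for $1\le m\le p-1$ (which forces $\mathfrak p$-integrality and makes the termwise reduction legitimate). Your write-up merely supplies details the paper leaves implicit, namely the Galois-conjugation justification of the unit property and the explicit remark that $\len(\kk)\ge 2$ guarantees $m_a\le p-1$.
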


	Theorem \ref{thm:FMZV} says that the finite multiple omega value is obtained from a certain algebraic substitution $q\rightarrow 1$ for $\omega_n (\kk;q)$.
	On the other hand, by an analytic limit $q\rightarrow 1$ along the unit circle, we are led into the definition of a real counterpart of finite multiple omega value.

	\begin{thm}\label{thm:SMZV}
		For any index $\kk=(k_1,\ldots,k_r)$ with $r\ge2$, the limit
		\[ \Omega(\kk) := \lim_{n\rightarrow \infty} \omega_n(\kk;e^{\frac{2\pi i}{n}})\]
		exists, and it is given by
		\begin{equation}\label{eq:omega_formula}
		\Omega(\kk) = \sum_{a=1}^r (-1)^{k_a} \zeta^{MT}(\underbrace{k_1,\ldots,k_{a-1}}_{a-1},\underbrace{k_{a+1},\ldots,k_r}_{r-a};k_a),
		\end{equation}
		where $\zeta^{MT}(k_1,\ldots,k_r;l)$ is the Mordell-Tornheim multiple zeta value defined in \eqref{eq:def_MT}.
	\end{thm}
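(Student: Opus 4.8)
The plan is to show that, once we set $q=\zeta_n=e^{2\pi i/n}$, the lattice sum defining $\omega_n(\kk;\zeta_n)$ is governed entirely by those terms in which a single summation variable $m_{a_0}$ is of size $\approx n$ while the remaining $r-1$ variables stay bounded: each such family of terms converges to one summand of \eqref{eq:omega_formula}, while the ``interior'' terms, in which two or more variables grow with $n$, contribute nothing in the limit. (Note that $r\ge 2$ guarantees every $m_a<n$, so each $[m_a]\neq 0$ and the value is well defined.) First I would record the two facts about $q$-integers at a primitive $n$-th root of unity that drive everything. The algebraic one is that, since $q^n=1$, for $0<S<n$ one has the exact identity
\[ [\,n-S\,]=\frac{1-q^{n-S}}{1-q}=\frac{1-q^{-S}}{1-q}=-q^{-S}[S]. \]
The analytic one is the magnitude $|[m]|=\sin(\pi m/n)/\sin(\pi/n)$ for $0<m<n$, together with the elementary lower bound $|[m]|\ge \tfrac{2}{\pi}\min(m,n-m)$, which follows from $\sin x\ge \tfrac{2}{\pi}x$ on $[0,\pi/2]$.

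Next I would partition the simplex $\{m_1+\cdots+m_r=n,\ m_a>0\}$ according to which coordinate $a_0$ is maximal (breaking ties by smallest index), and use $(m_a)_{a\neq a_0}$ as free coordinates with $m_{a_0}=n-|\tilde m|$, where $|\tilde m|:=\sum_{a\neq a_0}m_a$. Applying the identity above to the $a_0$-factor and using $q^{(k_{a_0}-1)n}=1$, the summand collapses to
\[ t_n(\tilde m)=(-1)^{k_{a_0}}\,\frac{q^{|\tilde m|}}{[\,|\tilde m|\,]^{k_{a_0}}}\prod_{a\neq a_0}\frac{q^{(k_a-1)m_a}}{[m_a]^{k_a}}, \]
which for each fixed $\tilde m$ tends, as $n\to\infty$ and $q\to 1$, to $(-1)^{k_{a_0}}\,|\tilde m|^{-k_{a_0}}\prod_{a\neq a_0}m_a^{-k_a}$. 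Summed over $\tilde m\in\N^{r-1}$ this is exactly $(-1)^{k_{a_0}}\zeta^{MT}\big((k_a)_{a\neq a_0};k_{a_0}\big)$, the $a_0$-th term of \eqref{eq:omega_formula}; and the chosen region exhausts $\N^{r-1}$ because for fixed $\tilde m$ the coordinate $m_{a_0}=n-|\tilde m|$ is eventually the strict maximum.

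The main work, and the step I expect to be the genuine obstacle, is justifying the interchange of the limit with the (growing) sum uniformly in $n$. Here the maximality of $m_{a_0}$ is precisely what makes the domination go through: on this region every non-maximal variable satisfies $2m_a\le m_a+m_{a_0}\le n$, hence $m_a\le n/2$ and $|[m_a]|\ge \tfrac{2}{\pi}m_a$, while $\min(|\tilde m|,m_{a_0})\ge |\tilde m|/(r-1)$ gives $|[\,|\tilde m|\,]|\ge \tfrac{2}{\pi(r-1)}|\tilde m|$. These combine into an $n$-independent bound $|t_n(\tilde m)|\le C\,|\tilde m|^{-k_{a_0}}\prod_{a\neq a_0}m_a^{-k_a}$, whose sum over $\N^{r-1}$ equals $C\,\zeta^{MT}\big((k_a)_{a\neq a_0};k_{a_0}\big)<\infty$. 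Finiteness holds because every $k_a\ge 1$ forces each Mordell-Tornheim convergence condition $\sum_{a\in I}k_a+k_{a_0}>|I|$ for nonempty $I\subseteq\{1,\dots,r\}\setminus\{a_0\}$.

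With this domination in hand, Tannery's theorem (dominated convergence for series) applies on the fixed index set $\N^{r-1}$ for each $a_0$, yielding
\[ \lim_{n\to\infty}\sum_{\tilde m\in R_n^{(a_0)}}t_n(\tilde m)=(-1)^{k_{a_0}}\zeta^{MT}\big((k_a)_{a\neq a_0};k_{a_0}\big), \]
where $R_n^{(a_0)}$ denotes the projection of the $a_0$-block of the partition. Summing these $r$ limits over $a_0$ shows both that $\Omega(\kk)=\lim_n\omega_n(\kk;\zeta_n)$ exists and that it equals the right-hand side of \eqref{eq:omega_formula}. The only delicate points to get right in the write-up are the tie-breaking bookkeeping (so that the blocks genuinely partition the simplex and no lattice point is double counted) and the verification that the bounds on $|[m_a]|$ and $|[\,|\tilde m|\,]|$ hold simultaneously on the whole of each block; everything else is routine.
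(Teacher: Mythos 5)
Your proposal is correct, and it takes a genuinely different route from the paper's. The paper works trigonometrically: it rewrites $1/[m]$ at $q=e^{2\pi i/n}$ via sines, extracts the global factor $\bigl(e^{\pi i/n}\tfrac{n}{\pi}\sin\tfrac{\pi}{n}\bigr)^{\wt(\kk)}$, and studies the normalized sum $G_n(\kk)$ of products $g_{k_a}(m_a/n)$; it then splits the simplex at the threshold $n/2$ into a region $I_0$ (all $m_a\le n/2$), whose contribution $A_n$ must be shown separately to vanish, and regions $I_j$ ($m_j>n/2$), whose contributions $B_n$ are compared to truncated Mordell--Tornheim sums through an explicit telescoping estimate with logarithmic error terms (estimates \eqref{eq:estimate1}, \eqref{eq:estimate2}). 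You instead use the exact reflection identity $[n-S]=-q^{-S}[S]$ at a primitive $n$-th root of unity --- the algebraic counterpart of the paper's reflection $h_l(x)=(-1)^l g_l(1-x)$ --- partition the simplex by the maximal coordinate so that the $r$ blocks exhaust it with no leftover region, and conclude via Tannery's theorem against the $n$-independent dominating series $C\,\zeta^{MT}\bigl((k_a)_{a\neq a_0};k_{a_0}\bigr)$, whose convergence for exponents $\ge 1$ is standard. Your key inequalities ($m_a\le n/2$ on each block, $\lvert[m]\rvert\ge\tfrac{2}{\pi}\min(m,n-m)$, and $\min(\lvert\tilde m\rvert,m_{a_0})\ge\lvert\tilde m\rvert/(r-1)$) are verified correctly, and you handle the two genuine subtleties --- that the $n$-dependent blocks eventually contain any fixed $\tilde m$, and that ties do not break the partition or the bounds. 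What your route buys is a cleaner limit interchange with no $\varepsilon$-management: the ``interior'' of the simplex, which the paper must kill separately as $A_n\to 0$, is simply absorbed into the tail of the convergent dominating series. What the paper's route buys is quantitative control (explicit $O\bigl((\log n)^{r-1}/n\bigr)$-type rates of convergence), which a soft dominated-convergence argument does not provide.
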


	Since every Mordell-Tornheim multiple zeta value can be written as $\Q$-linear combinations of multiple zeta values (see \cite[Theorem 1.1]{BradleyZhou}), the following definition makes sense.

	\begin{defn}
		For an index $\kk=(k_1,\ldots,k_r)$ with $r\ge2$, we define the \emph{symmetric multiple omega value} $\omega_{\mathcal{S}}(\kk)$ by
		\[ \omega_{\mathcal{S}}(\kk) := \Omega(\kk) \mod \zeta(2)\mathcal{Z}.\]
	\end{defn}

	Using Mathematica \cite{Mathematica}, we also numerically checked that the identity $\dim_\Q \MTWS_k \stackrel{?}{=} \dim_\Q \mathcal{Z}_k - \dim_\Q \mathcal{Z}_{k-2}$ holds up to weight 12, where $\MTWS_k$ denotes the $\Q$-vector subspace of $\mathcal{Z}/\zeta(2)\mathcal{Z}$ spanned by all symmetric multiple omega values of weight $k$.
	Consequently, for all $k\in \N$, the space $\MTWS_k$ is expected to be isomorphic to $\MTWA_k$ as a $\Q$-vector space.

	\subsection{Kaneko-Zagier conjecture revisited}

	We compared all $\Q$-linear relations among finite and symmetric multiple omega values up to weight 12, and it is observed that they satisfy the same relations over $\Q$.
	This further observation will support the following analogous statement to the Kaneko-Zagier conjecture.

	\begin{conj}\label{conj:main}
		For any $k\in \N$, the $\Q$-linear map $\varphi_k: \MTWA_k\rightarrow \MTWS_k$ given by
		\[ \varphi_k (\omega_{\mathcal{A}}(\kk) ) = \omega_{\mathcal{S}}(\kk)\]
		is a well-defined isomorphism.
	\end{conj}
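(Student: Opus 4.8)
The plan is to adopt the $q$-series strategy of \cite{BTT18} and reduce the conjecture to a single statement: the $\Q$-vector space of linear relations among the finite multiple omega values of weight $k$ coincides with the space of linear relations among the symmetric multiple omega values of weight $k$. Since $\MTWS_k$ is by definition spanned by the $\omega_{\mathcal{S}}(\kk)$, the map $\varphi_k$ is automatically surjective once it is well defined, so the whole content of the conjecture is the assertion that
\[
\sum_{\wt(\kk)=k} c_{\kk}\,\omega_{\mathcal{A}}(\kk)=0 \ \text{in}\ \mathcal{A}
\quad\Longleftrightarrow\quad
\sum_{\wt(\kk)=k} c_{\kk}\,\omega_{\mathcal{S}}(\kk)=0 \ \text{in}\ \mathcal{Z}/\zeta(2)\mathcal{Z}.
\]
Here the implication ``$\Rightarrow$'' is the well-definedness of $\varphi_k$ and ``$\Leftarrow$'' is its injectivity; together they give the isomorphism.

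The mechanism that should produce relations on both sides at once is the common origin of the two values in the single $q$-series $\omega_n(\kk;q)$ of \eqref{eq:def_qMTW}: by Theorem \ref{thm:FMZV} the finite value is its algebraic reduction at $q=\zeta_p$ modulo $(1-\zeta_p)\Z[\zeta_p]$, while by Theorem \ref{thm:SMZV} the symmetric value is its analytic limit along the unit circle. I would therefore look for a distinguished family of identities among the $q$-series themselves, of the shape $\sum_{\wt(\kk)=k} c_{\kk}\,\omega_n(\kk;q)=R_n(q)$, whose error term is simultaneously negligible for both specializations, i.e.\ $R_p(\zeta_p)\equiv 0 \pmod{(1-\zeta_p)\Z[\zeta_p]}$ for every prime $p$ and $\lim_{n\to\infty}R_n(e^{2\pi i/n})\in\zeta(2)\mathcal{Z}$. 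Each such identity then descends through the two maps to a relation with the same coefficients $c_{\kk}$ on the finite side and on the symmetric side. A concrete source is the factorization $\sum_{n\ge1}\omega_n(\kk)\,x^n=\prod_{a=1}^r\mathrm{Li}_{k_a}(x)$ of the generating series and its $q$-deformation: expanding the product by the shuffle rule rewrites it as a $\Z$-linear combination of one-variable (multiple) polylogarithms, and extracting the coefficient of $x^n$ yields exactly the kind of $q$-identities above. Combined with the permutation symmetry recorded after \eqref{eq:def_qMTW}, with the explicit formula \eqref{eq:omega_formula}, and with the reduction of Mordell-Tornheim values to multiple zeta values in \cite[Theorem 1.1]{BradleyZhou}, these should recover the relations visible in the numerical tables and furnish the upper bound $\dim_\Q\MTWA_k,\ \dim_\Q\MTWS_k\le d_k$, where $d_k=\dim_\Q\mathcal{Z}_k-\dim_\Q\mathcal{Z}_{k-2}$.

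To convert a matching family of relations into the stated isomorphism in every weight, one must pair the upper bounds above with the matching lower bounds $\dim_\Q\MTWA_k\ge d_k$ and $\dim_\Q\MTWS_k\ge d_k$. The lower bounds are the decisive obstacle, and I expect them to be out of reach unconditionally. On the symmetric side, bounding $\dim_\Q\MTWS_k$ from below requires transcendence or motivic information about multiple zeta values of the same depth that keeps the original Kaneko-Zagier conjecture open; on the finite side it requires structural control of the ring $\mathcal{A}$ and of the congruences satisfied by $\omega_p(\kk)\bmod p$. Consequently, I anticipate that what can realistically be proved unconditionally is the well-definedness direction ``$\Rightarrow$'', by exhibiting enough common $q$-series identities, together with the full isomorphism in each fixed low weight where the numerical dimensions are known; the bijectivity in arbitrary weight seems to remain conditional on the standard freeness and dimension conjectures for the space $\mathcal{Z}$.
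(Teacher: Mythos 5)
The statement you were asked to prove is a \emph{conjecture} in the paper: the authors do not prove it, and neither do you --- your proposal is candid about this, and that assessment is correct. What can be compared is the evidence each side assembles. Your reduction of the conjecture to the coincidence of the two relation spaces (well-definedness $=$ one inclusion of kernels, injectivity $=$ the other, surjectivity automatic) is exactly right, and your proposed mechanism --- identities $\sum_{\kk} c_{\kk}\,\omega_n(\kk;q)=R_n(q)$ among the $q$-series whose error term dies under both specializations --- is precisely what the paper carries out in Section 5: there the authors work in $\mathcal{R}=\prod_{n}\Q(\zeta_n)$ with the cyclotomic values $\omega(\kk)=(\omega_n(\kk;e^{2\pi i/n}))_n$, prove a family of such relations (Theorem \ref{thm:cycomegarel}), and observe that every such relation descends through Theorems \ref{thm:FMZV} and \ref{thm:SMZV} to a relation with the same coefficients on both the finite and the symmetric side (Corollary \ref{cor:valance_formula}). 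So that part of your plan matches the paper's Section 5 in both spirit and substance.

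Where you fall short of the paper is in the endgame. The paper's principal piece of evidence is Theorem \ref{thm:omega_fsmzv}: both values are evaluations of \emph{one and the same} element of $\mathfrak{h}^1$, namely
\begin{align*}
\omega_{\mathcal{A}}(\kk) = (-1)^{k_r}\,\zeta_{\mathcal{A}}\bigl(x_0^{k_r}(y_{k_1}\shuffle\cdots\shuffle y_{k_{r-1}})\bigr),
\qquad
\omega_{\mathcal{S}}(\kk) = (-1)^{k_r}\,\zeta_{\mathcal{S}}\bigl(x_0^{k_r}(y_{k_1}\shuffle\cdots\shuffle y_{k_{r-1}})\bigr),
\end{align*}
so the Kaneko--Zagier conjecture $\ker\zeta_{\mathcal{A}}\stackrel{?}{=}\ker\zeta_{\mathcal{S}}$ immediately implies Conjecture \ref{conj:main}, with no dimension input whatsoever. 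Your generating-series observation (that $\omega_n(\kk;q)$ is the coefficient of $t^n$ in $\prod_a L_{k_a}(t)$, to be expanded by a shuffle rule) is exactly the germ of the paper's proof of this theorem (Theorem \ref{thm:q-Kamano} via the $\qsh$-product on $\widehat{\mathfrak{H}}^1$, plus the word identity of Theorem \ref{thm:identity_words} on the symmetric side), but you stop before extracting the word-level identity and instead route the conclusion through dimension bounds $\dim_\Q\MTWA_k,\dim_\Q\MTWS_k \ge d_k$ with $d_k=\dim_\Q\mathcal{Z}_k-\dim_\Q\mathcal{Z}_{k-2}$. That makes your conditional statement strictly stronger than necessary: it would simultaneously prove the (independent) dimension conjecture, and it imports transcendence obstructions that Conjecture \ref{conj:main} itself does not require. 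The paper's reduction shows the honest state of affairs: the conjecture is open, but it is implied by Kaneko--Zagier alone, which is both a cleaner and a weaker hypothesis than the one your plan would need.
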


	Another evidence of Conjecture \ref{conj:main} will be provided as follows.
	Let $\mathfrak{h}=\Q\langle x_0,x_1\rangle$ and $\mathfrak{h}^1=\Q+\mathfrak{h}x_1$.
	We write $y_k=x_0^{k-1}x_1$ for $k\in \N$ and $y_{\kk}=y_{k_1}\cdots y_{k_r}$ for an index $\kk=(k_1,\ldots,k_r)$.
	Then, the set $\{y_\kk \mid \kk :\mbox{index}\}$ is a linear basis of $\mathfrak{h}^1$.
	For $\bullet \in \{\mathcal{A},\mathcal{S}\}$, we write $\zeta_{\mathcal{\bullet}}(y_{\kk})=\zeta_{\mathcal{\bullet}}(\kk)$, and extend these notations to $\mathfrak{h}^1$ by $\Q$-linearly. 
	Denote by $\shuffle: \mathfrak{h}\times \mathfrak{h}\rightarrow \mathfrak{h}$ the standard shuffle product (see \eqref{eq:def_shuffle}).

	\begin{thm}\label{thm:omega_fsmzv}
		For any index $\kk=(k_1,\ldots,k_r)$ with $r\ge2$, we have
		\begin{align}\label{eq:omega_fmzv}
		\omega_{\mathcal{A}}({\kk}) = (-1)^{k_r} \zeta_{\mathcal{A}}(x_0^{k_r} (y_{k_1}\shuffle \cdots \shuffle y_{k_{r-1}})),\\
		\label{eq:omega_smzv}
		\omega_{\mathcal{S}}({\kk}) = (-1)^{k_r} \zeta_{\mathcal{S}}(x_0^{k_r} (y_{k_1}\shuffle \cdots \shuffle y_{k_{r-1}})).
		\end{align}
	\end{thm}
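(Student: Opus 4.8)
The plan is to deduce both \eqref{eq:omega_fmzv} and \eqref{eq:omega_smzv} from one and the same combinatorial expansion of the convolution sum $\omega_n$, and then to transport that expansion to the two settings through the bridges already in hand: the definition of $\omega_{\mathcal{A}}$ (equivalently the algebraic substitution of Theorem~\ref{thm:FMZV}) on the finite side, and the closed formula of Theorem~\ref{thm:SMZV} on the symmetric side. In both cases the word $x_0^{k_r}(y_{k_1}\shuffle\cdots\shuffle y_{k_{r-1}})$ will appear because rewriting the inner convolution $\omega_M(k_1,\ldots,k_{r-1})$ as a nested multiple harmonic sum is governed precisely by the shuffle product. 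Concretely, splitting off the last variable by $m_r=n-M$ with $M=m_1+\cdots+m_{r-1}$ leaves $\omega_M(k_1,\ldots,k_{r-1})$ weighted by a factor attached to $M$; and $\omega_M(k_1,\ldots,k_{r-1})$ is the coefficient of $t^M$ in $\prod_{a=1}^{r-1}\mathrm{Li}_{k_a}(t)$, where the shuffle relation for single-variable multiple polylogarithms gives
\[
\prod_{a=1}^{r-1}\mathrm{Li}_{k_a}(t)=\sum_{w\in\, y_{k_1}\shuffle\cdots\shuffle y_{k_{r-1}}}\mathrm{Li}_w(t).
\]
Thus $\omega_M(k_1,\ldots,k_{r-1})$ equals the nested sum over $w$ with largest index fixed equal to $M$, and prepending the factor from $m_r$ replaces the leading letter $y_{a_1}$ of each shuffle word $w$ by $x_0^{k_r}y_{a_1}$, which is exactly the effect of the prefix $x_0^{k_r}$.

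For \eqref{eq:omega_fmzv} I would set $n=p$ and reduce modulo $p$. Since $m_r=p-M\equiv-M$, we get $m_r^{-k_r}\equiv(-1)^{k_r}M^{-k_r}$, which produces the global sign $(-1)^{k_r}$ and turns $\omega_p(\kk)$ into $(-1)^{k_r}\sum_{0<M<p}\omega_M(k_1,\ldots,k_{r-1})/M^{k_r}$ modulo $p$. Applying the combinatorial core and expanding, each shuffle word $w=y_{a_1}\cdots y_{a_j}$ contributes a finite multiple harmonic sum $\sum_{p>l_1>\cdots>l_j>0}l_1^{-a_1}\cdots l_j^{-a_j}$ whose class modulo $p$ is, by definition of the finite multiple zeta value, $\zeta_{\mathcal{A}}(x_0^{k_r}w)$; summing over $w$ gives $(-1)^{k_r}\zeta_{\mathcal{A}}(x_0^{k_r}(y_{k_1}\shuffle\cdots\shuffle y_{k_{r-1}}))$, as claimed.

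For \eqref{eq:omega_smzv} I would start from the closed form $\Omega(\kk)=\sum_{a=1}^{r}(-1)^{k_a}\zeta^{MT}(\ldots;k_a)$ of \eqref{eq:omega_formula}. Because $\zeta^{MT}(k_1,\ldots,k_s;l)=\sum_{M>0}\omega_M(k_1,\ldots,k_s)/M^{l}$ is the untruncated analogue of the finite sum above, the combinatorial core rewrites each Mordell--Tornheim value as the convergent multiple zeta value $\zeta(x_0^{l}(y_{k_1}\shuffle\cdots\shuffle y_{k_s}))$, the word $x_0^{l}(\cdots)$ being admissible; this is the content of \cite[Theorem~1.1]{BradleyZhou}. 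It then remains to show, modulo $\zeta(2)\mathcal{Z}$, that the position-symmetric sum $\sum_{a}(-1)^{k_a}\zeta(x_0^{k_a}(y_{k_1}\shuffle\cdots\widehat{y_{k_a}}\cdots\shuffle y_{k_r}))$ collapses to the single term $(-1)^{k_r}\zeta_{\mathcal{S}}(x_0^{k_r}(y_{k_1}\shuffle\cdots\shuffle y_{k_{r-1}}))$, which I would verify by unfolding $\zeta_{\mathcal{S}}$ into its defining deconcatenation sum and matching term by term.

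This last collapse is the main obstacle: one must reconcile the manifestly symmetric sum over all positions $a$ coming from Theorem~\ref{thm:SMZV} with the position-asymmetric expression on the right of \eqref{eq:omega_smzv}. The reconciliation should be forced by the antipode (reversal) structure built into the definition of $\zeta_{\mathcal{S}}$ together with the passage to the quotient modulo $\zeta(2)\mathcal{Z}$, and checking that the required cancellations actually occur is the delicate point. A uniform $q$-series proof of both identities at once is possible in principle, starting from the evaluation of $\omega_n(\kk;q)$ at $q=\zeta_n$; but there the same obstacle resurfaces analytically, as the non-standard weight factor $q^{(k_r-1)M}$ that appears after the substitution $m_r=n-M$ does not tend to $1$ uniformly in $M$ as $n\to\infty$, so one must argue that it does not affect the limit computed in Theorem~\ref{thm:SMZV}.
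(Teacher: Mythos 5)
Your argument for \eqref{eq:omega_fmzv} is correct, but it is a genuinely different route from the paper's: it is essentially Kamano's original proof \cite{Kamano}, which the paper explicitly notes is equivalent to \eqref{eq:omega_fmzv} while stressing that its own proof is ``completely different''. You substitute $m_r=p-M$, use $(p-M)^{-k_r}\equiv(-1)^{k_r}M^{-k_r}\pmod{p}$, and expand $\omega_M(k_1,\ldots,k_{r-1})=[t^M]\prod_{a=1}^{r-1}\mathrm{Li}_{k_a}(t)$ by the classical shuffle relation for one-variable multiple polylogarithms, so that summing over $0<M<p$ yields $H_p$ applied to the word $x_0^{k_r}(y_{k_1}\shuffle\cdots\shuffle y_{k_{r-1}})$; all of this is sound. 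The paper instead proves the stronger Theorem \ref{thm:q-Kamano}, an identity for $\omega_n(\kk;\zeta_n)$ at every primitive root of unity, using the $q$-shuffle product $\qsh$ and then the homomorphism $\rho$ satisfying $\rho(w_1\qsh w_2)=\rho(w_1)\shuffle\rho(w_2)$. What that buys is a single statement in the $q$-world which specializes to the finite side and is a lift of Kamano's result; your route is shorter but lives entirely mod $p$.

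For \eqref{eq:omega_smzv}, however, there is a genuine gap, and you have flagged it yourself. Via Theorem \ref{thm:SMZV} and formula \eqref{eq:MT_mz} (which, note, is Yamamoto's \cite{Yamamoto}; Bradley--Zhou prove only the qualitative statement that Mordell--Tornheim values are $\Q$-linear combinations of multiple zeta values), the claim reduces to the identity
\begin{align*}
\sum_{a=1}^{r}(-1)^{k_a}\,\zeta^{\shuffle}\bigl(x_0^{k_a}(y_{k_1}\shuffle\cdots\shuffle y_{k_{a-1}}\shuffle y_{k_{a+1}}\shuffle\cdots\shuffle y_{k_r})\bigr)
=(-1)^{k_r}\,\zeta^{\shuffle}_{\mathcal{S}}\bigl(x_0^{k_r}(y_{k_1}\shuffle\cdots\shuffle y_{k_{r-1}})\bigr),
\end{align*}
and everything hinges on this collapse. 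Your plan to ``unfold $\zeta_{\mathcal{S}}$ into its defining deconcatenation sum and match term by term'' is not a proof: the word $x_0^{k_r}(y_{k_1}\shuffle\cdots\shuffle y_{k_{r-1}})$ is a large nonnegative-integer combination of monomials $y_{\el}$, applying \eqref{eq:def_zeta_s} to each monomial produces reversed-and-signed shuffle products, and organizing the resulting cancellations against the manifestly position-symmetric left-hand side is exactly the content of the paper's Theorem \ref{thm:identity_words}. The paper needs a full generating-function apparatus for this word identity --- Lemma \ref{lem:zt-sh}, the permutation-sum formula of Proposition \ref{prop:zt-sh-formula1}, and the two expansion formulas, Propositions \ref{prop:zt-phi-formula2} and \ref{prop:zt-phi-formula1}, with carefully arranged pairwise cancellations indexed by transpositions. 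Saying the collapse ``should be forced by the antipode (reversal) structure'' names a heuristic, not an argument; as submitted, the symmetric half of the theorem is reduced to precisely the identity that constitutes the paper's main technical work, and that identity is left unproven. (Your closing remark about the factor $q^{(k_r-1)M}$ correctly explains why a direct analytic limit argument is also nontrivial, but it does not repair this gap.)
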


	By Theorem \ref{thm:omega_fsmzv}, we see that the Kaneko-Zagier conjecture, which insists the equality $\ker \zeta_{\mathcal{A}} \stackrel{?}{=} \ker \zeta_{\mathcal{S}}$, implies Conjecture \ref{conj:main}.
	Numerically, we also observed that all finite and symmetric multiple zeta values can be written as $\Q$-linear combinations of finite and symmetric multiple omega values, respectively.
	If this is true, then Conjecture \ref{conj:main} would imply the map $\varphi_{KZ}$ defined in Conjecture \ref{conj:Kaneko-Zagier} (the Kaneko-Zagier conjecture) induces a well-defined isomorphism of $\Q$-vector spaces.
	Thus, Conjecture \ref{conj:main} could be a new approach to the Kaneko-Zagier conjecture.
	An advantage of this point of view is that we do not need to regularize our values, but a weak point is that we can not describe the algebraic structure of the $\omega_{\bullet}$.

	We remark that Kamano \cite{Kamano} introduces the finite Mordell-Tornheim multiple zeta value $\zeta^{MT}_{\mathcal{A}}(\kk)$ for each index $\kk$.
	By definition, it follows that $\omega_{\mathcal{A}}(\kk)=(-1)^{k_r} \zeta^{MT}_{\mathcal{A}}(\kk)$ for any index $\kk$.
	Then, Theorem 1.2 of \cite{Kamano} is equivalent to \eqref{eq:omega_fmzv},
	but our proof is completely different (see Remark \ref{rem:Kamano}).

	\subsection{Contents}

	The organization of this paper is as follows.
	In Section 2, we recall some basics on the Kaneko-Zagier conjecture and Mordell-Tornheim multiple zeta values, thereby also fixing some of our notation.
	Section 3 is devoted to proving Theorems \ref{thm:FMZV} and \ref{thm:SMZV} and Section 4 gives a proof of Theorem \ref{thm:omega_fsmzv}.
	Our proof of \eqref{eq:omega_smzv} provides another expression of the limit value $\Omega(\kk)$ in terms of $\zeta_{\mathcal{S}}^\shuffle (\kk)$'s, which is mentioned in the end of Section 4.
	Sections 5 considers relations among the values $\omega_n(\kk;\zeta_n)$, which is applied to the study of relations of finite and symmetric multiple omega values together with special values of them.\\

	\noindent\textbf{Acknowledgments.}
	The authors would like to thank Masataka Ono, Shin-ichiro Seki and Shuji Yamamoto for kindly sharing their results on symmetric Mordell-Tornheim multiple zeta values \cite{OSY} and for pointing out some mistakes. 
	This work was partially supported by JSPS KAKENHI Grant Numbers 18K13393, 18K03233, 19K14499.

	\section{Preliminaries}

	\subsection{Statement of the Kaneko-Zagier conjecture}\label{subsection:KZconj}
	We briefly review the Kaneko-Zagier conjecture.

	We define the finite multiple zeta value.
	It is an element in the ring $\mathcal{A}$, which is introduced by Kontsevich \cite[\S2.2]{Kontsevich}, defined by
	\begin{align*}
	\mathcal{A} = \left( \prod_{p:{\rm prime}} \F_p \right) \big/ \left( \bigoplus_{p:{\rm prime}} \F_p \right)\,.
	\end{align*}
	Its element is denoted by $(a_p)_p$, where $p$ runs over all primes and $a_p\in \F_p$.
	Two elements $(a_p)_p$ and $(b_p)_p$ are identified if and only if $a_p=b_p$ for all but finitely many primes $p$.
	Rational numbers $\Q$ can be embedded into $\mathcal{A}$ as follows.
	For $a\in \Q$, set $a_p=0$ if $p$ divides the denominator of $a$ and $a_p=a\in \F_p$ otherwise.
	Then $(a_p)_p \in \mathcal{A}$.
	In this way, the ring $\mathcal{A}$ forms a commutative algebra over $\Q$.

	Let us define the multiple harmonic sum $ H_n(\kk)\in\Q$ for an index $\kk=(k_1,\ldots,k_r)$ and $n\in\N$ by
	\begin{equation}\label{eq:harmonic_sum}
	H_n(\kk)=\sum_{n>m_1>\cdots >m_r>0} \frac{1}{m_1^{k_1}\cdots m_r^{k_r}}.
	\end{equation}

	\begin{defn}\label{def:fmzv}
		For an index $\kk=(k_1,\ldots,k_r)$, we define the finite multiple zeta value $\zeta_{\mathcal{A}} (\kk)$ by
		\begin{align*}
		\zeta_{\mathcal{A}} (\kk)=
		\left(H_p(\kk) \mod p \right)_p \in \mathcal{A} .
		\end{align*}
	\end{defn}
	Denote by $\mathcal{Z}^{\mathcal{A}}_k$ the $\Q$-vector subspace of $\mathcal{A}$ spanned by all finite multiple zeta values of weight $k$ and set $\mathcal{Z}^{\mathcal{A}}=\sum_{k\ge0}\mathcal{Z}^{\mathcal{A}}_k$, which forms a $\Q$-algebra (see \cite{Kaneko,KanekoZagier}).

	\

	Let us turn to the symmetric multiple zeta values.
	We first recall the algebraic setup of multiple zeta values by Hoffman \cite{Hoffman1}.
	Let
	\[\mathfrak{h}=\mathbb{Q}\langle x_0,x_1 \rangle\]
	be the non-commutative polynomial ring
	with indeterminates $x_0$ and $x_1$, and set $y_{k}=x_0^{k-1}x_1$ $(k \ge 1)$.
	For a non-empty index $\kk=(k_{1}, \ldots , k_{r})$ we set $y_{\kk}=y_{k_{1}}\cdots y_{k_{r}}$.
	We define $y_{\emptyset}=1$ for the empty index.
	We also let
	\[\mathfrak{h}^{1}=\Q+\mathfrak{h}x_1,\quad \mathfrak{h}^{0}=\Q+x_0\mathfrak{h}x_1.\]
	The $\Q$-vector subspace $\mathfrak{h}^{1}$ is the $\Q$-subalgebra freely generated by $\{y_{k}\}_{k \ge 1}$, and the set of monomials $y_{\kk}$ with admissible index $\kk$ is a linear basis of $\mathfrak{h}^0$.

	Using the iterated integral expression of the multiple zeta value due to Kontsevich, one can prove that the map $\zeta:\mathfrak{h}^0\rightarrow \R$, defined by $\zeta(y_{\kk})=\zeta(\kk)$ for an admissible index $\kk$, is an algebra homomorphism with respect to the shuffle product $\shuffle : \mathfrak{h} \times \mathfrak{h} \to \mathfrak{h}$ given inductively by
	\begin{equation}\label{eq:def_shuffle}
	uw\shuffle vw'=u(w \shuffle vw')+v(uw \shuffle w')
	\end{equation}
	for $w,w' \in \mathfrak{h}$ and $u, v \in \{x_0, x_1\}$, with the initial condition $w \shuffle 1=w=1\shuffle w$.
	Namely, $\zeta(w_{1})\zeta(w_{2})=\zeta(w_{1} \shuffle w_{2})  $ holds for any $w_{1}, w_{2} \in \mathfrak{h}^{0}$.
	Equipped with the shuffle product, the vector space $\mathfrak{h}$ forms a commutative $\Q$-algebra and $\mathfrak{h}^1,\mathfrak{h}^0$ are $\Q$-subalgebras.
	We write $\mathfrak{h}_\shuffle,\mathfrak{h}^1_\shuffle,\mathfrak{h}^0_\shuffle$ for commutative $\Q$-algebras with the shuffle product.
	It follows that $\zeta(\mathfrak{h}^0)=\mathcal{Z}$, which is hence a $\Q$-subalgebra of $\R$.

	Let us define the shuffle regularized multiple zeta values, following \cite{IKZ}.
	Recall that the algebra $\mathfrak{h}^1_\shuffle$ is freely generated by $x_{1}$
	over $\mathfrak{h}^0_\shuffle$ (see \cite{Reutenauer}):
	\[ \mathfrak{h}^1_\shuffle \cong \mathfrak{h}^0_\shuffle[x_1].\]
	Namely, for any word $w\in \mathfrak{h}^1$, there exist $w_i\in \mathfrak{h}^0$ such that
	\[ w= w_0+w_1\shuffle x_1+w_2\shuffle x_1^{\shuffle 2}+\cdots+ w_n \shuffle x_1^{\shuffle n}.\]
	Hence, there is a unique algebra homomorphism $\zeta^\shuffle: \mathfrak{h}^1_\shuffle\rightarrow \R[T]$ such that the map $\zeta^\shuffle$ extend $\zeta:\mathfrak{h}^0_\shuffle\rightarrow \R$ and send $\zeta^\shuffle(x_1)=T$.
	Applying $\zeta^\shuffle$ to the above word, we get
	\[ \zeta^\shuffle(w)=\sum_{a=0}^n \zeta(w_a) T^a,\]
	so by definition it follows that $ \zeta^\shuffle(\mathfrak{h}^1)=\mathcal{Z}[T]$.
	For an index $\kk =(k_1,\ldots,k_r) $ we write $\zeta^\shuffle(y_\kk)\big|_{T=0}=\zeta^\shuffle(\kk)\in \mathcal{Z}$, which we call the shuffle regularized multiple zeta value.
	To define the symmetric multiple zeta value, for an index $\kk=(k_1,\ldots,k_r)$, let
	\begin{equation}\label{eq:def_zeta_s}
	\zeta^\shuffle_{\mathcal{S}}(\kk)= \sum_{a=0}^r (-1)^{k_1+\cdots+k_a} \zeta^\shuffle(k_a,k_{a-1},\ldots,k_1)\zeta^\shuffle(k_{a+1},k_{a+2},\ldots,k_r) .
	\end{equation}

	\begin{defn}\label{def:smzv}
		For an index $\kk=(k_1,\ldots,k_r)$, we define the symmetric multiple zeta value $ \zeta_{\mathcal{S}}(\kk)$ by
		\[ \zeta_{\mathcal{S}}(\kk):=\zeta^\shuffle_{\mathcal{S}}(\kk)\mod \zeta(2)\mathcal{Z} ,\]
		which is an element in the quotient $\Q$-algebra $\mathcal{Z}/\zeta(2)\mathcal{Z}$.
	\end{defn}

	We remark that Yasuda \cite[Theorem 6.1]{Yasuda} proved that the symmetric multiple zeta value $\zeta_{\mathcal{S}}(\kk)$ spans the whole space $\mathcal{Z}/\zeta(2)\mathcal{Z}$.
	Remark that there is another variant of $\zeta_{\mathcal{S}}^\shuffle(\kk)$ replacing $\zeta^\shuffle$ with $\zeta^\ast$ the harmonic regularized multiple zeta value in the definition, which is however known to be congruent to $\zeta_{\mathcal{S}}(\kk)$ modulo $\zeta(2)$ (see \cite[Proposition 9.1]{Kaneko} and \cite{KanekoZagier}).

	\

	The Kaneko-Zagier conjecture is stated as follows.

	\begin{conj}\label{conj:Kaneko-Zagier}
		The $\Q$-linear map $\varphi_{KZ}: \mathcal{Z}^{\mathcal{A}}\rightarrow \mathcal{Z}/\zeta(2)\mathcal{Z}$ given by
		\[ \varphi_{KZ}(\zeta_{\mathcal{A}}(\kk)) = \zeta_{\mathcal{S}}(\kk) \]
		is a well-defined isomorphism of $\Q$-algebras.
	\end{conj}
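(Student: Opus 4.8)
The final statement is the Kaneko--Zagier conjecture, which is a major open problem; what I can realistically offer is the shape a proof would have to take and where the essential difficulty sits. The first move is to reformulate the assertion intrinsically on $\mathfrak{h}^1$. Extending $\zeta_{\mathcal A}$ and $\zeta_{\mathcal S}$ $\Q$-linearly to maps $\mathfrak{h}^1\to\mathcal{Z}^{\mathcal A}$ and $\mathfrak{h}^1\to\mathcal Z/\zeta(2)\mathcal Z$ via $y_{\kk}\mapsto\zeta_\bullet(\kk)$, the map $\varphi_{KZ}$ is well-defined precisely when $\ker\zeta_{\mathcal A}\subseteq\ker\zeta_{\mathcal S}$, and it is injective precisely when the reverse inclusion holds; surjectivity is already available, since finite multiple zeta values span $\mathcal{Z}^{\mathcal A}$ by definition and symmetric multiple zeta values span $\mathcal Z/\zeta(2)\mathcal Z$ by Yasuda's theorem cited above. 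Thus everything reduces to the single equality of kernels
\[
\ker\zeta_{\mathcal A}=\ker\zeta_{\mathcal S}\subseteq\mathfrak{h}^1,
\]
after which the algebra-homomorphism property of $\varphi_{KZ}$ follows automatically, because both $\zeta_{\mathcal A}$ and $\zeta_{\mathcal S}$ are already homomorphisms for the harmonic product on $\mathfrak{h}^1$.

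To attack $\ker\zeta_{\mathcal A}=\ker\zeta_{\mathcal S}$ I would first collect the families of relations known to lie in both kernels: the finite and symmetric double shuffle relations, duality, and their consequences. If one could exhibit a fixed, explicitly describable family $R\subseteq\mathfrak{h}^1$ generating each kernel, so that $\ker\zeta_{\mathcal A}=(R)=\ker\zeta_{\mathcal S}$, the conjecture would follow. The natural vehicle for proving that the two kernels are cut out by the same $R$ is the $q$-series bridge underlying this paper: both $\zeta_{\mathcal A}(\kk)$ and a representative of $\zeta_{\mathcal S}(\kk)$ are produced from one multiple harmonic $q$-series $z(\kk;q)$ by an algebraic, respectively analytic, limit as $q\to 1$ through primitive roots of unity. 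A relation holding among the values $z(\kk;\zeta_n)$ at the level of $\Q(\zeta_n)$ specializes under the algebraic limit to a relation among finite values and under the analytic limit to one among symmetric values. Hence the plan is to prove that every $\Q$-linear relation surviving either limit already holds among the $q$-values themselves, which would force both kernels to coincide with the space of these ``$q$-relations''.

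The main obstacle is exactly this last claim, and it is where the problem is genuinely hard: we possess no classification of all $\Q$-linear relations on either side. On the analytic side the dimension of $\mathcal Z_k$ is governed by Zagier's still-conjectural formula, and on the finite side by the parallel prediction $\dim_\Q\mathcal{Z}^{\mathcal A}_k=\dim_\Q\mathcal Z_k-\dim_\Q\mathcal Z_{k-2}$; proving that an explicit $R$ exhausts the relations is equivalent to these dimension statements together with the requisite transcendence input, none of which is currently available. Concretely, I expect the hardest single step to be transferring a relation from one limit to the other---showing, say, that $\ker\zeta_{\mathcal S}\subseteq\ker\zeta_{\mathcal A}$---since there is no known mechanism that lifts an individual symmetric relation to a $q$-relation without first knowing a complete generating set. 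For this reason the statement is recorded as a conjecture rather than proved; the realistic contribution of the present framework is not a full proof but the reduction, via Theorem \ref{thm:omega_fsmzv}, of the parallel Conjecture \ref{conj:main} to it, so that any future progress on $\ker\zeta_{\mathcal A}=\ker\zeta_{\mathcal S}$ would immediately yield the Mordell--Tornheim analogue.
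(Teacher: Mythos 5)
The statement you were given is Conjecture \ref{conj:Kaneko-Zagier}, the Kaneko--Zagier conjecture itself: the paper offers no proof of it (it remains a major open problem), so there is nothing to compare at the level of proofs, and your decision to present a reduction and an obstruction analysis rather than a purported proof is exactly right. Your reduction is also sound and consistent with how the paper uses the conjecture: well-definedness and injectivity amount to $\ker\zeta_{\mathcal{A}}=\ker\zeta_{\mathcal{S}}$ on $\mathfrak{h}^1$, surjectivity follows from Yasuda's theorem cited in the paper, and the paper's own contribution is precisely the reduction (via Theorem \ref{thm:omega_fsmzv}) of Conjecture \ref{conj:main} to this kernel equality.
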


	\subsection{Mordell-Tornheim multiple zeta value}\label{sec:q}
	In this subsection, we recall the Mordell-Tornheim type of multiple zeta values.

	The Mordell-Tornheim multiple zeta value is defined for an index $\kk=(k_1,\ldots,k_r)$ and $l\in\N$ by
	\begin{equation}\label{eq:def_MT}
	\zeta^{MT}(\kk;l) = \sum_{m_1,\ldots,m_r>0} \frac{1}{m_1^{k_1}\cdots m_r^{k_r} (m_1+\cdots+m_r)^l}.
	\end{equation}
	This type of sum for the case $r=2$ was first studied by Tornheim \cite{Tornheim} in 1950, and independently by Mordell \cite{Mordell} in 1958 with $k_1=k_2=l$, and then, rediscovered by Witten \cite{Witten} in 1991 in his volume formula for certain moduli spaces related to theoretical physics (see also Zagier's number theoretical treatment \cite{Zagier}).
	The case $r\ge3$ was first introduced by Matsumoto \cite{Matsumoto} in 2002 as a function of several complex variables, and then investigated by many authors from many point of views (see e.g. \cite{MatsumotoNakamuraOchiaiTsumura,Nakamura,Onodera,Tsumura02,Tsumura05,Tsumura07}).

	As a study of special values, it was shown by Bradley-Zhou \cite[Theorem 1.1]{BradleyZhou} in 2010 that every Mordell-Tornheim multiple zeta value can be written as a $\Q$-linear combination of multiple zeta values.
	Moreover, for an index $\kk=(k_1,\ldots,k_r)$ Yamamoto \cite{Yamamoto} proved the following formula:
	\begin{equation}\label{eq:MT_mz}
	\zeta^{MT}(k_1,\ldots,k_{r-1};k_r) =\zeta(x_0^{k_r} (y_{k_1}\shuffle \cdots \shuffle y_{k_{r-1}})).
	\end{equation}

	It is worth mentioning that there seems no study on the opposite implication;
	``Every multiple zeta value can be written as a $\Q$-linear combination of Mordell-Tornheim multiple zeta values''.
	We numerically checked this implication up to weight 12 using Mathematica:
	\begin{conj}\label{conj:MT}
		The space $\mathcal{Z}$ is generated by Mordell-Tornheim multiple zeta values over $\Q$.
	\end{conj}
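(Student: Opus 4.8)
The plan is to convert the statement into a purely combinatorial spanning problem in the shuffle algebra $\mathfrak{h}^0_\shuffle$ by means of Yamamoto's formula \eqref{eq:MT_mz}. Let $W\subseteq\mathfrak{h}^0$ be the $\Q$-linear span of all elements $x_0^{l}\bigl(y_{k_1}\shuffle\cdots\shuffle y_{k_s}\bigr)$ with $s\ge1$, $l\ge1$ and $k_1,\ldots,k_s\ge1$. By \eqref{eq:MT_mz} the image $\zeta(W)$ is exactly the $\Q$-span of all Mordell-Tornheim multiple zeta values, and since $\zeta(\mathfrak{h}^0)=\mathcal{Z}$ it suffices to show $\zeta(W)=\mathcal{Z}$ in each positive weight, for which a clean sufficient condition is the algebraic identity $W=\mathfrak{h}^0$ (again in each positive weight). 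First I would note that both $\shuffle$ and left multiplication by $x_0$ preserve the number of occurrences of $x_1$, so $W$ is graded by weight and by this $x_1$-count (the depth); the problem thus decouples into showing, for each depth $s$, that the depth-$s$ part of $W$ spans all admissible words $y_\kk$ with $\len(\kk)=s$.

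The depth-$1$ case is immediate, since $x_0^{l}y_{k}=y_{l+k}$ produces every $y_n$ with $n\ge2$. For depth $2$ I would run a triangular induction based on the explicit expansion
\[ x_0^{l}\bigl(y_a\shuffle y_b\bigr)=\sum_{\substack{i+j=a+b\\ i,j\ge1}}\left[\binom{i-1}{a-1}+\binom{i-1}{b-1}\right]y_{l+i}\,y_j. \]
Ordering the admissible words $y_m y_n$ ($m\ge2$) by the last index $n$, the source $x_0^{\,m-1}(y_n\shuffle y_1)$ has leading term $y_m y_n$ with coefficient $1$ (coefficient $2$ when $n=1$) and all remaining terms of strictly smaller last index. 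A descending induction on $n$, subtracting the already-constructed lower words, then realises every $y_m y_n$ inside $W$, giving an invertible triangular system. The goal is to install the analogous ordering and triangular system in arbitrary depth, using the iterated-shuffle expansion of $y_{k_1}\shuffle\cdots\shuffle y_{k_s}$ as a $\Z$-combination of words with explicit binomial coefficients.

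The hard part will be exactly this last step for depth $s\ge3$: the multiple shuffle $y_{k_1}\shuffle\cdots\shuffle y_{k_s}$ spreads over a dense set of words whose coefficients are sums of products of binomials, and it is not clear that after prepending $x_0^{l}$ one can always isolate a single admissible leading word with invertible coefficient. Finding the correct monomial order and proving the nonvanishing of the relevant leading coefficients is the genuine obstruction, and is the reason the statement is left as a conjecture here. Moreover, the reduction $W=\mathfrak{h}^0$ may be stronger than what is needed: should it fail in some weight, one would only require $W+\ker\zeta=\mathfrak{h}^0$, which forces one to exploit the structure of $\ker\zeta$, i.e.\ the full set of $\Q$-linear relations among multiple zeta values, which is itself not understood. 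A realistic unconditional route would therefore combine the shuffle triangularity above with known families of relations (the sum formula, Ohno's relations, or a passage to Hoffman's basis $\{\zeta(\kk):k_i\in\{2,3\}\}$), proving that each basis element lies in $\zeta(W)$; this is the step I expect to consume most of the effort.
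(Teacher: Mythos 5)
The statement you have attempted is left as a conjecture in the paper: the authors give no proof at all, only a numerical verification up to weight 12 using Mathematica, together with a remark on what any verification must involve. So there is no proof of the paper's to compare yours against; the only question is whether your plan could settle the statement, and along its primary route it cannot.

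The step that fails is your ``clean sufficient condition'' $W=\mathfrak{h}^0$. The remark immediately following Conjecture \ref{conj:MT} states precisely that this fails: there exist monomials $y_\kk$ that cannot be written as $\Q$-linear combinations of the elements $x_0^{k_r}(y_{k_1}\shuffle\cdots\shuffle y_{k_{r-1}})$, so that $W\subsetneq\mathfrak{h}^0$ and any check of the conjecture must use relations among multiple zeta values. You can see the failure concretely using the depth grading you yourself set up: since $\shuffle$ and left multiplication by $x_0$ preserve the number of $x_1$'s, the depth-$3$, weight-$5$ component of $W$ is spanned by just the two elements $x_0(y_2\shuffle y_1\shuffle y_1)$ and $x_0^2(y_1\shuffle y_1\shuffle y_1)$, whereas the corresponding component of $\mathfrak{h}^0$ has dimension $3$ (basis $y_3y_1y_1$, $y_2y_2y_1$, $y_2y_1y_2$); a similar count gives $4<6$ in weight $6$. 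Hence no monomial order or triangularity argument can work in depth $\ge 3$: your depth-$1$ and depth-$2$ computations are correct (depth $2$ is Euler's decomposition), but the entire content of the conjecture lives in higher depth, where the deficit is unavoidable. Your fallback formulation $W+\ker\zeta=\mathfrak{h}^0$ is indeed the right one --- it is exactly what the paper's weight-by-weight numerics verify --- but, as you yourself observe, it requires controlling $\ker\zeta$, i.e.\ the full set of $\Q$-linear relations among multiple zeta values, which is not understood; invoking the sum formula, Ohno's relations, or a spanning set such as Hoffman's $\{\zeta(\kk): k_i\in\{2,3\}\}$ still leaves open the membership of each spanning element in $\zeta(W)$. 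This is why the statement remains a conjecture, and your proposal, while structurally sensible and honest about its gaps, is not a proof.
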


	Remark that to check Conjecture \ref{conj:MT}, it is necessary to use relations among multiple zeta values.
	In other words, one can check that there exist a monomial $y_\kk$ such that it cannot be written as $\Q$-linear combinations of $x_0^{k_r} (y_{k_1}\shuffle \cdots \shuffle y_{k_{r-1}})$'s.

	\


	\section{Proofs of Theorems \ref{thm:FMZV} and \ref{thm:SMZV}}
	\subsection{Proof of Theorem \ref{thm:FMZV}}
	We prove Theorem \ref{thm:FMZV}.

	\begin{proof}[Proof of Theorem \ref{thm:FMZV}]
		We first notice that the value $[m]\big|_{q={\zeta_p}}=(1-\zeta_p^m)/(1-\zeta_p)$ for any prime $p$ is a cyclotomic unit.
		Hence, the value $\omega_p (\kk;\zeta_p)$ lies in $\Z[\zeta_p]$, and taking modulo the ideal $(1-\zeta_p)\Z[\zeta_p]$ generated by $1-\zeta_p$ in $\Z[\zeta_p]$ makes sense.
		Since $(1-\zeta_p)\Z[\zeta_p]$ is a prime ideal,
		its residue field $\Z[\zeta_p]/(1-\zeta_p)\Z[\zeta_p]$ is $\mathbb{F}_p$.
		It follows that
		$q^{(k-1)m}/[m]^{k}|_{q=\zeta_p}\equiv m^{-k} \mod{(1-\zeta_p)\Z[\zeta_p]}$ for all $k\in \N$,
		and hence
		\[ \omega_p(\kk;\zeta_p) \equiv \omega_p(\kk) \mod{(1-\zeta_p)\Z[\zeta_p]} .\]
		Then, the result follows from the definition of the finite multiple omega value.
	\end{proof}

	\subsection{Proof of Theorem \ref{thm:SMZV}}
	We now prove Theorem \ref{thm:SMZV}.

	\begin{proof}[Proof of Theorem \ref{thm:SMZV}]
		We see that
		\begin{align*}
		\frac{1}{[m]}\bigg|_{q=e^{2\pi i/n}}=e^{-\frac{\pi i}{n}(m-1)}\frac{\sin{\frac{\pi}{n}}}{\sin{\frac{m \pi}{n}}} \qquad
		(n>m > 0).
		\end{align*}
		Therefore it holds that
		\begin{align*}
		\omega_{n}(\kk; e^{2\pi i/n})=\left(e^{\frac{\pi i}{n}}\frac{n}{\pi}\sin{\frac{\pi}{n}}\right)^{\wt(\kk)}\,
		G_{n}(\kk),
		\end{align*}
		where the function $G_{n}(\kk)$ is defined by
		\begin{align*}
		G_{n}(\kk)=\frac{1}{n^{\wt(\kk)}}
		\sum_{\substack{m_{1}, \ldots , m_{r}>0 \\ m_{1}+\cdots +m_{r}=n}}
		\prod_{a=1}^{r}g_{k_{a}}\left(\frac{m_{a}}{n}\right), \qquad
		g_{k}(x)=\left(\frac{\pi}{\sin{\pi x}}\right)^{k}e^{(k-2) \pi i x} \,\, (k \in \N).
		\end{align*}
		It suffices to show that $G_{n}(\kk)$ converges to the right side of \eqref{eq:omega_formula}
		in the limit $n \to \infty$.
		We decompose the range of the sum
		$I=\{ (m_{1}, \ldots , m_{r}) \in \N^{r} \, | \, \sum_{a=1}^{r}m_{a}=n \}$
		into the subsets
		\begin{align*}
		&
		I_{0}=\{
		(m_{1}, \ldots , m_{r}) \in I \, | \,  0<m_{a}\le n/2 \,\, (1\le \forall{a} \le r)
		\}, \\
		&
		I_{j}=\{ (m_{1}, \ldots , m_{r}) \in I \, | \,  m_{j}>n/2, \, 0<m_{a}\le n/2 \,\, (\forall{a}\not=j) \},
		\quad (1\le j \le r).
		\end{align*}
		Note that $I=\sqcup_{j=0}^{r}I_{j}$.
		Now we set
		\begin{align*}
		&
		A_{n}(l_{1}, \ldots , l_{r})=\frac{1}{n^{\sum_{a=1}^{r}l_{a}}}
		\sum_{\substack{n/2\ge m_{1}, \ldots , m_{r}>0 \\ m_{1}+\cdots +m_{r}=n}}
		\prod_{a=1}^{r}g_{l_{a}}\left(\frac{m_{a}}{n}\right), \\
		&
		B_{n}(l_{1}, \ldots , l_{r-1}; l_{r})=\frac{1}{n^{\sum_{a=1}^{r}l_{a}}}
		\sum_{\substack{m_{1}, \ldots , m_{r-1}>0 \\ m_{1}+\cdots +m_{r-1}<n/2}}
		\prod_{a=1}^{r-1}g_{l_{a}}\left(\frac{m_{a}}{n}\right)\,
		h_{l_{r}}\left(\frac{m_{1}+\cdots +m_{r-1}}{n}\right),
		\end{align*}
		where
		\begin{align*}
		h_{l}(x)=(-1)^{l}g_{l}(1-x)=\left(\frac{\pi}{\sin{\pi x}}\right)^{l}e^{-(l-2) \pi i x}.
		\end{align*}
		Then it holds that
		\begin{align*}
		&
		\frac{1}{n^{\wt(\kk)}}
		\sum_{(m_{1}, \ldots , m_{r}) \in I_{0}}\prod_{a=1}^{r}g_{k_{a}}\left(\frac{m_{a}}{n}\right)=
		A_{n}(k_{1}, \ldots , k_{r}), \\
		&
		\frac{1}{n^{\wt(\kk)}}
		\sum_{(m_{1}, \ldots , m_{r}) \in I_{j}}\prod_{a=1}^{r}g_{k_{a}}\left(\frac{m_{a}}{n}\right)=
		(-1)^{k_{j}}B_{n}(k_{1}, \ldots , k_{j-1},k_{j+1}, \ldots , k_{r}; k_{j}).
		\end{align*}
		Therefore it is enough to show that, for any index $\kk$ and $l \in \N$,
		\begin{align*}
		A_{n}(\kk) \to 0, \qquad \, B_{n}(\kk; l)\to \zeta^{MT}(\kk; l)
		\end{align*}
		as $n \to \infty$.

		First, we calculate the limit of $A_{n}(\kk)$.
		Set
		\begin{align*}
		\tilde{A}_{n}(\kk)=\sum_{\substack{n/2\ge m_{1}, \ldots , m_{r}>0 \\ m_{1}+\cdots +m_{r}=n}}
		\frac{1}{m_{1}^{k_{1}} \cdots m_{r}^{k_{r}}}.
		\end{align*}
		It holds that
		\begin{align}
		|g_{k}(x)|\le \left(\frac{\pi}{2x}\right)^{k} \qquad (0<x\le \frac{1}{2}).
		\label{eq:estimate1}
		\end{align}
		Hence $|A_{n}(\kk)| \le C_{\kk} \tilde{A}_{n}(\kk)$
		for a constant $C_{\kk}$ which does not depend on $n$.
		The limit of $\tilde{A}_{n}(\kk)$ as $n \to \infty$ is equal to zero because
		\begin{align*}
		0\le \tilde{A}_{n}(\kk) \le
		\sum_{\substack{n/2\ge m_{1}, \ldots , m_{r}>0 \\ m_{1}+\cdots +m_{r}=n}}
		\frac{1}{m_{1} \cdots m_{r}} \le
		\sum_{\substack{m_{1}, \ldots , m_{r}>0 \\ m_{1}+\cdots +m_{r}=n}}
		\left(\frac{2}{n}\right)^{r}=
		\left(\frac{2}{n}\right)^{r}\binom{n-1}{r-1} \to 0.
		\end{align*}
		Therefore $A_{n}(\kk) \to 0$ as $n \to 0$.

		Next we calculate the limit of $B_{n}(\kk; l)$
		for an index $\kk=(k_{1}, \ldots , k_{r-1})$ and a positive integer $l$.
		Set
		\begin{align*}
		\tilde{B}_{n}(\kk; l)=\sum_{\substack{m_{1}, \ldots , m_{r-1}>0 \\ m_{1}+\cdots +m_{r-1}<n/2}}
		\frac{1}{m_{1}^{k_{1}} \cdots m_{r-1}^{k_{r-1}} (m_{1}+\cdots +m_{r-1})^{l}}.
		\end{align*}
		Since the function $f(z)=z^{k-1}(g_{k}(z)-z^{-k})$ is regular in a neighborhood
		of the interval $[0, 1/2]$, there exists a positive constant $C$ such that
		\begin{align}
		\left| g_{k}(x)-\frac{1}{x^{k}} \right| \le \frac{C}{x^{k-1}} \qquad (0<x \le 1/2).
		\label{eq:estimate2}
		\end{align}
		Using \eqref{eq:estimate1}, \eqref{eq:estimate2} and the polynomial identity
		\begin{align*}
		\prod_{p=1}^{r}X_{p}-\prod_{p=1}^{r}Y_{p}=
		\sum_{j=1}^{r}(\prod_{p=1}^{j-1}X_{p}) (X_{j}-Y_{j}) (\prod_{p=j+1}^{r}Y_{p}),
		\end{align*}
		we obtain the following estimation:
		\begin{align*}
		&
		\left|
		\frac{1}{n^{\sum_{a=1}^{r-1}k_{a}+l}}
		\prod_{a=1}^{r-1}g_{k_{a}}\left(\frac{m_{a}}{n}\right)\,
		h_{l}\left(\frac{m_{1}+\cdots +m_{r-1}}{n}\right)-
		\prod_{a=1}^{r-1}\frac{1}{m_{a}^{k_{a}}}\,
		\frac{1}{(m_{1}+\cdots +m_{r-1})^{l}}
		\right| \\
		&\le
		\sum_{j=1}^{r-1}
		\prod_{a=1}^{j-1}\left|\frac{1}{n^{k_{a}}}g_{k_{a}}\left(\frac{m_{a}}{n}\right)\right| \,
		\left|\frac{1}{n^{k_{j}}}g_{k_{j}}\left(\frac{m_{j}}{n}\right)-\frac{1}{m_{j}^{k_{j}}} \right|
		\prod_{a=j+1}^{r-1}\frac{1}{m_{a}^{k_{a}}} \,
		\frac{1}{(m_{1}+\cdots +m_{r-1})^{l}} \\
		&\quad {}+\prod_{a=1}^{r-1}\left|\frac{1}{n^{k_{a}}}g_{k_{a}}\left(\frac{m_{a}}{n}\right)\right| \,
		\left|\frac{1}{n^{l}}h_{l}\left(\frac{m_{1}+\cdots +m_{r-1}}{n}\right)-\frac{1}{(m_{1}+\cdots +m_{r-1})^{l}} \right| \\
		&\le
		\frac{C_{\kk, l}'}{n}\sum_{j=1}^{r-1}
		\biggl\{
		\frac{1}{m_{1}^{k_{1}} \cdots m_{j}^{k_{j}-1} \cdots m_{r-1}^{k_{r-1}}} \,
		\frac{1}{(m_{1}+\cdots +m_{r-1})^{l}}  \\
		& \qquad \qquad \qquad {}+\frac{1}{m_{1}^{k_{1}} \cdots m_{r-1}^{k_{r-1}}}
		\frac{1}{(m_{1}+\cdots +m_{r-1})^{l-1}}\biggr\} \\
		&\le \frac{C_{\kk, l}'}{n}\,
		\frac{r}{m_{1} \cdots m_{r-1}}
		\end{align*}
		for some constant $C_{\kk, l}'$ which does not depend on $n$.
		Hence
		\begin{align*}
		\left|B_{n}(\kk)-\tilde{B}_{n}(\kk)\right| &\le
		\frac{C_{\kk, l}'}{n}
		\sum_{\substack{m_{1}, \ldots , m_{r-1}>0 \\ m_{1}+\cdots +m_{r-1}<n/2}}
		\frac{r}{m_{1} \cdots m_{r-1}}
		\le
		C_{\kk, l}' \, \frac{r}{n}
		\left(\sum_{n/2>m>0}\frac{1}{m}\right)^{r-1} \\
		&\le
		C_{\kk, l}' \, \frac{r}{n} \left(1+\log{\frac{n+1}{2}}\right)^{r-1} \to 0
		\end{align*}
		as $n \to \infty$.
		Therefore it holds that
		\begin{align*}
		\lim_{n \to \infty}B_{n}(\kk; l)=\lim_{n \to \infty}\tilde{B}_{n}(\kk; l)=\zeta^{MT}(\kk; l).
		\end{align*}
		This completes the proof.
	\end{proof}

	\section{Proof of Theorem \ref{thm:omega_fsmzv}}

	\subsection{$q$-multiple polylogarithm}
	In this subsection, we briefly recall the $q$-multiple polylogarithm, introduced by Zhao \cite{Zhao07}, of one variable $L_\kk(t)$ and its shuffle relations.

	In what follows, we fix a complex parameter $q$ such that $|q|<1$.
	Define $F_{k}(m)$ for positive integers $k$ and $m$ by
	\begin{align*}
	F_{k}(m)=\frac{q^{(k-1)m}}{[m]^{k}} \in \C.
	\end{align*}
	We introduce the letter $\hat{1}$ and set $\widehat{\N}=\N \sqcup \{\hat{1}\}$.
	Hereafter we call a tuple of the elements of $\widehat{\N}$ an extended index.
	In order to give a closed formula for the shuffle product for the $q$-multiple polylogarithm, we also need the term
	\begin{align*}
	F_{\hat{1}}(m)=\frac{q^{m}}{[m]}\in \C
	\end{align*}
	for $m \in \N$.
	For a non-empty extended index $\kk=(k_1,\ldots,k_r)\in \widehat{\N}^r$ we define the $q$-multiple polylogarithm of one variable $L_\kk(q;t)$ by
	\begin{equation*}\label{eq:def_mp}
	L_\kk(t)=\sum_{m_1>\cdots>m_r>0} t^{m_1} \prod_{a=1}^r F_{k_a}(m_a)
	\end{equation*}
	as a formal power series in $\C[[t]]$.
	We set $L_\emptyset(t)=1$.

	\

	Recall the algebraic setup (see \cite[\S2.2]{Takeyama19} and \cite{Zhao}).
	Let $\hbar$ be a formal variable and set $\mathcal{C}=\mathbb{Q}[\hbar, \hbar^{-1}]$.
	We denote the unital non-commutative polynomial ring over $\mathcal{C}$
	with two indeterminates $a$ and $b$ by
	\[\mathfrak{H}=\mathcal{C}\langle a,b\rangle.\]
	For $k \in \widehat{\N}$ we define $e_{k} \in \mathfrak{H}$ by
	\begin{align*}
	e_{\hat{1}}=ab, \qquad
	e_{k}=a^{k-1}(a+\hbar)b \quad (k \in \N).
	\end{align*}
	Let
	\[\widehat{\mathfrak{H}}^{1}=\mathcal{C}\langle e_{k}\mid k\in \widehat{\N}\rangle\]
	be
	the subalgebra freely generated by the set $\{e_{k}\}_{k \in \widehat{\mathbb{N}}}$.
	For a non-empty extended index $\kk=(k_{1}, \ldots , k_{r}) \in \widehat{\N}^{r}$
	we set $e_{\kk}=e_{k_{1}}\cdots e_{k_{r}}$.
	For the empty index we set $e_{\emptyset}=1$.
	Then, the set $\{e_\kk\mid \kk:\mbox{extended index}\}$ is a linear basis of $\widehat{\mathfrak{H}}^{1} $.

	We define the $\Q$-linear action of $\mathcal{C}$ on $\C[[t]]$ by $(\hbar f)(t)=(1-q)f(t)$ for $f\in \C[[t]]$ and
	then the $\mathcal{C}$-module homomorphism
	\[ \L:\widehat{\mathfrak{H}}^1\rightarrow \C[[t]],\ e_\kk\mapsto L_\kk(t).\]

	We recall that the map $\L:\widehat{\mathfrak{H}}^1\rightarrow \C[[t]]$ is viewed as an evaluation map.
	While the map $\zeta:\mathfrak{h}^0\rightarrow \R$ is given by an iterated integral of the integrands $\frac{dt}{t},\frac{dt}{1-t}$ corresponding to the indeterminates $x_0,x_1\in \mathfrak{h}$, we view the indeterminates $a,b,\hbar$ as operations on the formal power series ring $\C[[t]]$ (or the ring of holomorphic functions on $D=\{t\in \C: |t|<1\}$).
	They are defined for $f(t)=\sum_{n\ge0}c_n t^n\in \C[[t]]$ by
	\begin{align*}
	(af)(t)=(1-q)\sum_{n\ge0} f(q^n t)= (1-q)\sum_{n\ge0} c_n t^n \frac{q^n}{[n]},\\
	\quad (bf)(t)=\frac{t}{1-t}f(t),\quad (\hbar f)(t)=(1-q)f(t),
	\end{align*}
	where $a$ is only defined for $f$ being $f(0)=0$.
	By definition, $\hbar$ commutes with others.
	With this, for any extended index $\kk\in \widehat{\N}^r$ it can be shown that
	\[ L_\kk(t)=  e_\kk (1) .\]
	For example, one can compute
	\begin{align*}
	L_{\hat{1},1}(t)&=ab(a+\hbar)b(1)=ab(a+\hbar) \left( \sum_{n>0}t^n \right)= ab \left( \sum_{n>0}t^n \frac{1}{[n]}\right)\\
	&=a \left( \sum_{m>n>0}t^m \frac{1}{[n]}\right) = \sum_{m>n>0}t^m \frac{q^m}{[m][n]},
	\end{align*}
	where for the third equality we have used
	\begin{align*}
	\frac{q^n}{[n]}+(1-q)=\frac{q^n(1-q)+(1-q)(1-q^n)}{1-q^n}=\frac{1}{[n]}.
	\end{align*}
	Thus, the map $\L:\widehat{\mathfrak{H}}^1\rightarrow \C[[t]]$ is alternatively defined by $\L(e_\kk)=e_\kk(1)$.

	We define the shuffle product $\qsh$ as the $\mathcal{C}$-bilinear map
	$\qsh: \mathfrak{H} \times \mathfrak{H}\to \mathfrak{H}$ by
	\begin{align*}
	&
	a w\qsh a w'=a (a w \qsh w'+w\qsh a w'+\hbar\, w\qsh w'), \\
	&
	b w\qsh w'=w\qsh b w'=b (w \qsh w')
	\end{align*}
	for $w, w' \in \mathfrak{H}$, with the initial condition $1\qsh w=w \qsh 1=w$.
	For example, $e_1\qsh e_1=2abab+ab\hbar b+2\hbar bab + \hbar b\hbar b= e_1e_1+e_1e_{\hat{1}}$.
	The element $w \qsh w'$ for $w, w' \in \widehat{\mathfrak{H}}^{1}$ lies in $\widehat{\mathfrak{H}}$, but, as we see in the example, it turns out to be a $\Q$-linear combination of words in $\widehat{\mathfrak{H}}^1$:

	\begin{lem}\label{lem:shuffle-rel}
		For $w, w' \in \widehat{\mathfrak{H}}^{1}$ we have $w \qsh w'\in \widehat{\mathfrak{H}}^{1}$.
	\end{lem}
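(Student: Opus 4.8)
The plan is to reduce the lemma to a statement about the rightmost letter of words. First I would establish the characterization
\[\widehat{\mathfrak{H}}^{1}=\mathcal{C}+\mathfrak{H}b,\]
that is, $\widehat{\mathfrak{H}}^{1}$ consists precisely of the $\mathcal{C}$-linear combinations of the empty word and of words ending in the letter $b$. The inclusion $\subseteq$ is immediate: every generator $e_{k}$ $(k\in\widehat{\N})$ ends in $b$, hence so does every nonempty monomial occurring in a product $e_{k_{1}}\cdots e_{k_{r}}$, and the empty word contributes the $\mathcal{C}$ summand. For the reverse inclusion $\supseteq$, I observe that $b=\hbar^{-1}(e_{1}-e_{\hat{1}})\in\widehat{\mathfrak{H}}^{1}$ and that $a^{n}b=e_{n}-\hbar\,a^{n-1}b$ for $n\ge1$, so that $a^{n}b\in\widehat{\mathfrak{H}}^{1}$ for all $n\ge0$ by induction. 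Since $\widehat{\mathfrak{H}}^{1}$ is closed under concatenation, every word $a^{n_{1}}b\,a^{n_{2}}b\cdots a^{n_{s}}b$ ending in $b$ then lies in $\widehat{\mathfrak{H}}^{1}$, which gives $\supseteq$.

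With this characterization, the lemma is equivalent to the following: calling a word \emph{good} if it is empty or ends in $b$, the product $u\qsh v$ of two good words is a $\mathcal{C}$-linear combination of good words. I would prove this by induction on the total number of letters $\ell(u)+\ell(v)$, splitting on the leading letters. If either factor is the empty word the claim is trivial. If $u$ (resp.\ $v$) begins with $b$, say $u=bw$, then the rule $bw\qsh v=b(w\qsh v)$ peels off the $b$; the word $w$ is again good, induction applies, and prepending $b$ sends good words to good words, so this case causes no trouble. The delicate case is when both $u=aw$ and $v=aw'$ begin with $a$, where the rule
\[aw\qsh aw'=a\bigl(aw\qsh w'+w\qsh aw'+\hbar\,w\qsh w'\bigr)\]
prepends an $a$. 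Here $w$ and $w'$ are both \emph{nonempty} good words (since $u,v$ begin with $a$ yet end in $b$), so all three inner shuffles fall under the induction hypothesis and return good words.

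The main obstacle is precisely the final step of this $a$--$a$ case: because of the length-lowering term $\hbar\,w\qsh w'$, the product $\qsh$ does not preserve word length, and prepending $a$ to the empty word would produce the bad word $a$. To exclude this I would carry along, in the same induction, the auxiliary fact that every monomial appearing in $u\qsh v$ has length at least $\max(\ell(u),\ell(v))$; in particular, the shuffle of two nonempty words never contains the empty word. In the $a$--$a$ case each of $aw,\,w,\,aw',\,w'$ is nonempty, so all three inner shuffles are free of the empty word, every resulting good word genuinely ends in $b$, and prepending $a$ keeps it ending in $b$. This closes the induction, and combining the good-words statement with the characterization $\widehat{\mathfrak{H}}^{1}=\mathcal{C}+\mathfrak{H}b$ yields $w\qsh w'\in\widehat{\mathfrak{H}}^{1}$ for all $w,w'\in\widehat{\mathfrak{H}}^{1}$.
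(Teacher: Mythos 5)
Your proof is correct, and it is necessarily a different route from the paper's: the paper gives no argument for this lemma at all, its proof being a single citation to Proposition 12.2.20 of \cite{Zhao}. The argument in Zhao's book (and the computations the paper itself performs later, in the proof of \eqref{eq:omega_fmzv}, when establishing $\rho(w_1\qsh w_2)=\rho(w_1)\shuffle\rho(w_2)$) proceeds by induction with a case analysis on the leading generators of the two words, explicitly rewriting $e_{k_1}w_1\qsh e_{k_2}w_2$ as a combination of words in the $e_k$'s, e.g. $e_{\hat{1}}w_1\qsh e_{\hat{1}}w_2=e_{\hat{1}}(w_1\qsh e_{\hat{1}}w_2+e_{\hat{1}}w_1\qsh w_2)+e_{\hat{1}}(e_1-e_{\hat{1}})(w_1\qsh w_2)$. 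Your route is structural instead: the identification $\widehat{\mathfrak{H}}^{1}=\mathcal{C}+\mathfrak{H}b$ is correct (both inclusions are exactly as you argue), and your length induction on words ending in $b$ isolates the one genuinely delicate point, the $a$--$a$ case, and handles it correctly; in fact, to exclude the empty word you do not need the full invariant $\ell\geq\max(\ell(u),\ell(v))$ --- it suffices to note that every recursion step prepends a letter $a$ or $b$, so the shuffle of two nonempty words is a combination of nonempty words --- though your stronger invariant also closes under the same induction. Two trade-offs are worth recording: your proof is short and self-contained, and the characterization $\widehat{\mathfrak{H}}^{1}=\mathcal{C}+\mathfrak{H}b$ is a useful structural fact in its own right; on the other hand it uses the invertibility of $\hbar$ essentially (via $b=\hbar^{-1}(e_1-e_{\hat{1}})$), so it would not survive replacing $\mathcal{C}$ by $\Q[\hbar]$, whereas the generator-based recursion proves closure already over $\Q[\hbar]$ and produces the explicit expansions in the $e_k$'s that the paper needs later anyway for the homomorphism property of $\rho$.
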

	\begin{proof}
		For the proof, see Proposition 12.2.20 in \cite{Zhao}.
	\end{proof}

	With Lemma \ref{lem:shuffle-rel}, we can state the shuffle relation for $q$-multiple polylogarithms.
	\begin{prop}\label{prop:shuffle-rel}
		For $w, w' \in \widehat{\mathfrak{H}}^{1}$ it holds that $\L(w \qsh w')=\L(w)\L(w')$.
	\end{prop}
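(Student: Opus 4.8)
The plan is to reduce the statement to a single algebraic ``$q$-Leibniz rule'' for the operator $a$, and then to run an induction on word length that mirrors the defining recursion of $\qsh$. First I would pass to the operator picture $\L(w)=w(1)$. Writing words in the letters $a,b$ with $\hbar\in\mathcal{C}$ central, the definitions give the compatibilities $\L(aw)=a\bigl(\L(w)\bigr)$, $\L(bw)=\frac{t}{1-t}\,\L(w)$ and $\L(\hbar w)=(1-q)\L(w)$. Every $e_{\kk}$, hence every element of $\widehat{\mathfrak H}^1$, is a $\mathcal{C}$-linear combination of words whose rightmost letter is $b$; since $b(1)=\frac{t}{1-t}$ vanishes at $t=0$ and each of $a,b,\hbar$ preserves vanishing at $0$, the evaluation $w\mapsto w(1)$ is well defined on the $\mathcal{C}$-span $W$ of all such words, and it agrees with $\L$ on $\widehat{\mathfrak H}^1\subseteq W$. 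Because $\qsh$ is $\mathcal{C}$-bilinear and $\L$ is $\mathcal{C}$-linear, it then suffices to prove $\L(u\qsh v)=\L(u)\L(v)$ for words $u,v\in W$; the assertion for $w,w'\in\widehat{\mathfrak H}^1$ follows, noting by Lemma \ref{lem:shuffle-rel} that $w\qsh w'$ again lies in $\widehat{\mathfrak H}^1$.

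The key step, which I expect to be the main obstacle, is the identity
\[ a(f)\,a(g)=a\bigl(a(f)\,g+f\,a(g)+(1-q)fg\bigr) \qquad (f(0)=g(0)=0). \]
To establish it I would introduce the $q$-shift $(\sigma f)(t)=f(qt)$ and record, directly from the action of $a$ on $\C[[t]]$, the relation $\sigma^{-1}a(f)=a(f)+(1-q)f$, equivalently $a(f)(q^{-1}t)-a(f)(t)=(1-q)f(t)$. This shows that $\partial:=\frac{1}{1-q}(\sigma^{-1}-1)$ satisfies $\partial\,a=\mathrm{id}$, and that $a\,\partial=\mathrm{id}$ on power series vanishing at the origin. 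Writing $F=a(f)$, $G=a(g)$ and expanding
\[ \partial(FG)=\tfrac{1}{1-q}\bigl(\sigma^{-1}F\cdot\sigma^{-1}G-FG\bigr) \]
with $\sigma^{-1}F=F+(1-q)f$ and $\sigma^{-1}G=G+(1-q)g$ collapses to $\partial(FG)=a(f)\,g+f\,a(g)+(1-q)fg$; applying $a$ and using $a\,\partial=\mathrm{id}$ on $FG$ (which vanishes at $0$) yields the claimed identity. This is precisely the operator form of the recursion $aw\qsh aw'=a\bigl(aw\qsh w'+w\qsh aw'+\hbar\,w\qsh w'\bigr)$ under $\L$, with $\hbar\mapsto(1-q)$; matching the sign of the $(1-q)fg$ term against the $\hbar$-term is the delicate point, and it is exactly here that the $\hbar$-deformation of the shuffle is forced.

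With this in hand I would prove $\L(u\qsh v)=\L(u)\L(v)$ by induction on the total number of letters in $u$ and $v$, following the cases of the recursion. The base case $u=1$ (or $v=1$) is immediate from $\L(1)=1$ and $1\qsh v=v$. If $u=bw$ (or symmetrically $v=bw'$), then $u\qsh v=b(w\qsh v)$ gives
\[ \L(u\qsh v)=\frac{t}{1-t}\,\L(w\qsh v)=\frac{t}{1-t}\,\L(w)\L(v)=\L(bw)\L(v) \]
by the inductive hypothesis. If $u=aw$ and $v=aw'$, applying $\L$ to $u\qsh v=a\bigl(aw\qsh w'+w\qsh aw'+\hbar\,w\qsh w'\bigr)$ and invoking the inductive hypothesis on the three shorter shuffles reduces the claim to the $q$-Leibniz rule with $f=\L(w)$ and $g=\L(w')$. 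Throughout, every word appearing still ends in $b$, so all evaluations are defined, and the compatibilities $\L(a\,\cdot)=a\L(\cdot)$ and $\L(b\,\cdot)=\frac{t}{1-t}\L(\cdot)$ together with $\mathcal{C}$-linearity let me move the operators outside the products. This completes the argument, the only genuinely computational ingredient being the $q$-Leibniz rule above.
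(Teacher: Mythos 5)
Your proof is correct and takes essentially the same route as the paper: both arguments reduce the claim, by induction on word length following the defining recursion of $\qsh$, to the two operator identities $(af)\cdot(ag)=a\bigl(af\cdot g+f\cdot ag+\hbar\, f\cdot g\bigr)$ (for $f(0)=g(0)=0$) and $(bf)\cdot g=b(f\cdot g)=f\cdot(bg)$ on $\C[[t]]$. The only difference is that the paper dismisses these identities as ``easily checked,'' while you actually derive the $a$-identity via the $q$-difference operator $\partial=\frac{1}{1-q}(\sigma^{-1}-1)$ inverse to $a$ --- a worthwhile explicit verification, but not a different method.
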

	\begin{proof}
		By Lemma \ref{lem:shuffle-rel}, the expression $\L(w \qsh w')$ is well-defined.
		Then the desired result follows from easily checked identities $(af)\cdot (ag)=a(f\cdot ag)+a(af \cdot g)+\hbar a (f\cdot g)$ for $f,g\in \C[[t]]$ with $f(0)=g(0)=0$ and $(bf) \cdot g=b(f\cdot g)=f\cdot (bg)$ for $f,g\in \C[[t]]$, where $f\cdot g$ means the product on $\C[[t]]$.
	\end{proof}

	\subsection{Multiple harmonic $q$-series at primitive roots of unity}\label{sec:previous_result}

	In this subsection, we recall the main results of our previous work \cite{BTT18}.

	For a word $w\in \widehat{\mathfrak{H}}^1$, we write $u_m(w)$ the coefficient of $t^m$ in $\L(w)\in \C[[t]]$
	\begin{equation}\label{eq:def_u}
	\sum_{n\ge0} u_m (w) t^n := \L(w)
	\end{equation}
	and define
	\begin{equation}\label{eq:def_zn}
	z_n (w): = \sum_{m=1}^{n-1} u_m(w).
	\end{equation}
	These are viewed as $\mathcal{C}$-module homomorphisms from $\widehat{\mathfrak{H}}^1$ to $\C$ with the $\mathcal{C}$-linear action defined by $\hbar \alpha=(1-q)\alpha$ for $\alpha\in \C$.
	Namely, we have $z_n (\hbar e_\kk)=(1-q) z_n(e_\kk)$ for an extended index $\kk=(k_1,\ldots,k_r) \in \widehat{\N}^r$, where by definition
	\begin{equation*}\label{eq:prop_zn}
	z_n (e_\kk) = \sum_{n>m_1>\cdots>m_r>0}  \prod_{a=1}^r F_{k_a}(m_a).
	\end{equation*}

	In \cite{BTT18}, we studied the value $z_n (e_\kk)$ for an index $\kk$ (not extended index) at $q=\zeta_n$ a primitive $n$-th root of unity.
	Note that the substitution $q=\zeta_n$ does not make sense if $\zeta_n$ is not primitive.
	For an index $\kk=(k_1,\ldots,k_r)$, we write
	\begin{equation}\label{eq:def_znq}
	z_n(\kk ;q):=z_n(e_\kk) = \sum_{n>m_1>\cdots>m_r>0} \prod_{a=1}^r \frac{q^{(k_a-1)m_a}}{[m_a]^{k_a}}.
	\end{equation}

	\begin{thm}\cite[Theorems 1.1 and 1.2]{BTT18} \label{thm:BTT}
		\begin{enumerate}
			\item For any index $\kk\in \N^r$ we have
			\[ \left(z_p(\kk;\zeta_p) \mod (1-\zeta_p)\Z[\zeta_p]\right)_p = \zeta_{\mathcal{A}}(\kk) \in \mathcal{A}.\]
			\item For any index $\kk=(k_{1}, \ldots , k_{r})\in \N^r$ the limit
			\begin{align*}
			\xi(\kk)=\lim_{n \to \infty}z_{n}(\kk; e^{2\pi i/n})
			\end{align*}
			exists, and it holds that $\mathrm{Im}\, \xi(\kk) \in \pi \mathcal{Z}$ and
			\begin{align*}
			\mathrm{Re}\,\xi(\kk)\equiv \zeta_{\mathcal{S}}(\kk) \quad \mathrm{mod} \quad
			\zeta(2)\mathcal{Z}.
			\end{align*}
		\end{enumerate}
	\end{thm}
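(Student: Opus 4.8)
The plan is to prove the two assertions separately, the first by a purely algebraic reduction and the second by an analytic computation parallel to the proof of Theorem~\ref{thm:SMZV}.

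For assertion (1) I would repeat the reduction modulo $(1-\zeta_p)\Z[\zeta_p]$ already used for Theorem~\ref{thm:FMZV}. For $0<m<p$ the $q$-integer $[m]\big|_{q=\zeta_p}=(1-\zeta_p^m)/(1-\zeta_p)$ is a cyclotomic unit, so each summand of $z_p(\kk;\zeta_p)$ lies in $\Z[\zeta_p]$ and reduction modulo the prime ideal $(1-\zeta_p)$ is meaningful. Under the identification $\Z[\zeta_p]/(1-\zeta_p)\Z[\zeta_p]=\F_p$ one has $\zeta_p^m\equiv 1$, hence $[m]\equiv m$ and $\zeta_p^{(k-1)m}/[m]^{k}\equiv m^{-k}$; applying this factorwise to the nested sum \eqref{eq:def_znq} gives $z_p(\kk;\zeta_p)\equiv H_p(\kk)\pmod{(1-\zeta_p)\Z[\zeta_p]}$, and Definition~\ref{def:fmzv} identifies the family of these residues with $\zeta_{\mathcal{A}}(\kk)$. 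This part is routine.

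The substance lies in assertion (2). First I would rewrite the value at $q=e^{2\pi i/n}$ using the identity $1/[m]\big|_{q=e^{2\pi i/n}}=e^{-\pi i(m-1)/n}\sin(\pi/n)/\sin(m\pi/n)$ from the proof of Theorem~\ref{thm:SMZV}; computing the total phase of $\prod_a q^{(k_a-1)m_a}/[m_a]^{k_a}$ exactly as there yields
\[
z_n(\kk;e^{2\pi i/n})=\Big(\tfrac{n}{\pi}\sin\tfrac{\pi}{n}\,e^{\pi i/n}\Big)^{\wt(\kk)} G_n(\kk),\qquad
G_n(\kk)=\frac{1}{n^{\wt(\kk)}}\sum_{n>m_1>\cdots>m_r>0}\prod_{a=1}^{r}g_{k_a}\Big(\tfrac{m_a}{n}\Big),
\]
with the same $g_k$, $h_l$. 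Since the prefactor tends to $1$, it suffices to compute $\lim_{n\to\infty}G_n(\kk)$. I would then split the ordered range by the cut index $s$ for which $m_1>\cdots>m_s>n/2\ge m_{s+1}>\cdots>m_r$; the inequality $m_s>m_{s+1}$ is then automatic, so the two blocks decouple. Substituting $m_a\mapsto n-m_a$ in the large block and using $g_{k_a}(1-x)=(-1)^{k_a}h_{k_a}(x)$ reverses its order and extracts the sign $(-1)^{k_1+\cdots+k_s}$. With the local expansions $g_k(x)=x^{-k}+(k-2)\pi i\,x^{-(k-1)}+\cdots$ near $x=0$ (and similarly for $h_k$ with the opposite sign of the $\pi i$ term) together with the estimates \eqref{eq:estimate1}, \eqref{eq:estimate2}, the leading term of each block tends to a multiple zeta value, the large block to the reversed $\zeta(k_s,\ldots,k_1)$ and the small block to $\zeta(k_{s+1},\ldots,k_r)$. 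Summing over $s$ reproduces exactly the defining combination \eqref{eq:def_zeta_s} of $\zeta^\shuffle_{\mathcal{S}}(\kk)$.

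The hard part is to make this rigorous when a block has leading exponent $1$, where $\sum_{m<n/2}1/m\sim\log(n/2)$ diverges, and to separate real from imaginary contributions. I would carry the subleading terms of the expansions of $g_k$ and $h_k$ explicitly: their $\log n$ parts realize the shuffle regularization, so that a divergent block converges to $\zeta^\shuffle$ evaluated with the regularization variable $T$ set to the constant coming from $\log(n/2)$; the known $T$-independence of the symmetric combination \eqref{eq:def_zeta_s} then guarantees that these $\log n$ contributions cancel across the sum over $s$ and that the limit exists. The phases $e^{\pm(k-2)\pi i x}$ feeding these subleading terms produce the remaining refinements: odd powers of $\pi i$ give rational multiples of $\pi$ times lower-weight multiple zeta values, assembling into $\mathrm{Im}\,\xi(\kk)\in\pi\mathcal{Z}$, while even powers give real multiples of $\pi^{2j}=(6\zeta(2))^{j}$ with $j\ge1$ lying in $\zeta(2)\mathcal{Z}$, so that $\mathrm{Re}\,\xi(\kk)\equiv\zeta^\shuffle_{\mathcal{S}}(\kk)\equiv\zeta_{\mathcal{S}}(\kk)\pmod{\zeta(2)\mathcal{Z}}$ by Definition~\ref{def:smzv}. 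I expect this bookkeeping of the regularization and the parity of the phase contributions, rather than the decomposition itself, to be the principal obstacle.
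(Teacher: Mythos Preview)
The paper does not give a proof of this statement: Theorem~\ref{thm:BTT} is quoted verbatim from \cite[Theorems~1.1 and~1.2]{BTT18} and is used as input, not reproved. There is therefore nothing in the present paper to compare your proposal against.

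That said, your sketch for part~(1) is complete and matches the computation the paper does carry out for Theorem~\ref{thm:FMZV}. For part~(2), your decomposition by the cut index $s$ where the chain $m_1>\cdots>m_r$ crosses $n/2$, followed by the reflection $m_a\mapsto n-m_a$ on the large block, is the natural analogue of the $I_0,I_1,\ldots,I_r$ decomposition used in the proof of Theorem~\ref{thm:SMZV}, and you have correctly identified the genuine difficulty: when the leading entry of either block is $1$, the truncated sums diverge like $\log n$, and one must show that the shuffle-regularized polynomial $\zeta^\shuffle(\cdot)\in\mathcal{Z}[T]$ appears with $T$ replaced by an explicit $\log$-term, so that the known $T$-independence of the combination~\eqref{eq:def_zeta_s} forces cancellation. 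Your parity argument for separating $\mathrm{Re}$ and $\mathrm{Im}$ via the even/odd powers of $\pi i$ coming from the phase factors is also the right heuristic. What remains a gap in your outline is the uniform control of the error terms across both blocks simultaneously when \emph{both} blocks are non-admissible (each contributing a $\log n$), and the verification that the cross-terms between the $O(1/n)$ corrections and the $\log n$ divergences vanish in the limit; this is where the actual work in \cite{BTT18} lies, and it is not a triviality.
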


	\subsection{Proof of \eqref{eq:omega_fmzv}}

	We first prove the following theorem and then \eqref{eq:omega_fmzv}.

	\begin{thm}\label{thm:q-Kamano}
		For any index $\kk=(k_{1}, \ldots , k_{r}) \in \N^{r}$ with $r \ge 2$ and primitive $n$-th root of unity $\zeta_n$, it holds that
		\begin{align*}
		\omega_{n}(\kk; \zeta_{n})=(-1)^{k_{r}}
		\sum_{j=1}^{k_{r}}\binom{k_{r}-1}{j-1}(1-q)^{k_{r}-j}
		z_{n}(a^{j}(e_{k_{1}} \qsh \cdots \qsh e_{k_{r-1}}))\big|_{q=\zeta_n}.
		\end{align*}
	\end{thm}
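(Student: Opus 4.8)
The plan is to reduce both sides to a single sum $\sum_{m=1}^{n-1}$ weighted by the Taylor coefficients of $\L(W)$, where $W=e_{k_{1}}\qsh\cdots\qsh e_{k_{r-1}}$, and then to match the two reduced expressions through the substitution $m\mapsto n-m$, which becomes available only at a primitive root of unity. First I would note that, by \eqref{eq:def_qMTW}, $\omega_{n}(\kk;q)$ is the coefficient of $t^{n}$ in the product $\prod_{a=1}^{r}L_{k_{a}}(t)$ of single-variable $q$-polylogarithms: expanding the product and collecting the monomial $t^{m_{1}+\cdots+m_{r}}$ reproduces exactly the constraint $m_{1}+\cdots+m_{r}=n$ together with the weights $\prod_{a}F_{k_{a}}(m_{a})$. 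By Lemma \ref{lem:shuffle-rel} the element $W$ lies in $\widehat{\mathfrak H}^{1}$, and Proposition \ref{prop:shuffle-rel} gives $\L(W)=\prod_{a=1}^{r-1}L_{k_{a}}(t)$. Writing $u_{m}(W)$ for the coefficient of $t^{m}$ in $\L(W)$ as in \eqref{eq:def_u} (with $u_{0}(W)=0$, since $W$ is a combination of words of positive length), the coefficient extraction becomes the convolution
\begin{equation*}
\omega_{n}(\kk;q)=\sum_{m=1}^{n-1}F_{k_{r}}(m)\,u_{n-m}(W),
\end{equation*}
an identity of rational functions of $q$ valid wherever the $q$-integers involved are nonzero.

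Next I would evaluate the proposed right-hand side. Since $a$ multiplies the coefficient of $t^{m}$ by $q^{m}/[m]$, iterating gives $u_{m}(a^{j}W)=(q^{m}/[m])^{j}u_{m}(W)$, hence by \eqref{eq:def_zn} we get $z_{n}(a^{j}W)=\sum_{m=1}^{n-1}(q^{m}/[m])^{j}u_{m}(W)$. Substituting this into the formula and interchanging the two summations, the inner sum over $j$ collapses, after the shift $i=j-1$ and the binomial theorem, to
\begin{equation*}
\sum_{j=1}^{k_{r}}\binom{k_{r}-1}{j-1}(1-q)^{k_{r}-j}\Big(\frac{q^{m}}{[m]}\Big)^{j}=\frac{q^{m}}{[m]}\Big((1-q)+\frac{q^{m}}{[m]}\Big)^{k_{r}-1}=\frac{q^{m}}{[m]^{k_{r}}},
\end{equation*}
where the last equality uses the identity $(1-q)+q^{m}/[m]=1/[m]$ already exploited in the computation of $L_{\hat 1,1}$. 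Thus the right-hand side of the theorem equals $(-1)^{k_{r}}\sum_{m=1}^{n-1}\frac{q^{m}}{[m]^{k_{r}}}u_{m}(W)$, again as a rational-function identity.

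Finally it remains to match $\sum_{m=1}^{n-1}F_{k_{r}}(m)\,u_{n-m}(W)$ with $(-1)^{k_{r}}\sum_{m=1}^{n-1}\frac{q^{m}}{[m]^{k_{r}}}u_{m}(W)$, and this is where the primitive root of unity genuinely enters. Re-indexing the convolution by $m\mapsto n-m$ turns $F_{k_{r}}(n-m)=q^{(k_{r}-1)(n-m)}/[n-m]^{k_{r}}$ into a weight attached to $u_{m}(W)$, and at $q=\zeta_{n}$ the relations $q^{n}=1$ and $[n-m]=-q^{-m}[m]$ collapse this weight into precisely $(-1)^{k_{r}}q^{m}/[m]^{k_{r}}$, the sign $(-1)^{k_{r}}$ coming from $[n-m]^{k_{r}}=(-1)^{k_{r}}q^{-k_{r}m}[m]^{k_{r}}$. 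This yields the asserted equality. The main obstacle is exactly this reflection step: everything up to it holds for generic $q$, whereas identifying the two one-sided sums relies essentially on $q^{n}=1$ and on the primitivity of $\zeta_{n}$ (which guarantees that every $[m]$ with $0<m<n$ is a unit, so all terms are well-defined); carefully tracking the powers of $q$ and the sign through the substitution is the delicate part of the argument.
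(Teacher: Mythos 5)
Your proposal is correct and takes essentially the same route as the paper's proof: both factor $\prod_{a}L_{k_a}(t)$ via Proposition \ref{prop:shuffle-rel}, extract the coefficient of $t^n$ as a convolution with $u_0(W)=0$, use the reflection $F_{k_r}(n-m)\big|_{q=\zeta_n}=(-1)^{k_r}q^m/[m]^{k_r}$ coming from $q^n=1$ and $[n-m]=-q^{-m}[m]$, and invoke the binomial identity built on $(1-q)+q^m/[m]=1/[m]$. The only cosmetic difference is organizational: you reduce both sides to the common expression $(-1)^{k_r}\sum_{m=1}^{n-1}\frac{q^m}{[m]^{k_r}}u_m(W)$, justifying $u_m(a^jW)=(q^m/[m])^j u_m(W)$ by the operator action of $a$ on power series, whereas the paper rewrites the left side directly into the right side, justifying the same step algebraically via $a^je_l=e_{l+j}$ and $W\in\sum_l e_l\widehat{\mathfrak{H}}^1$.
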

	\begin{proof}
		It is easily seen that the value $\omega_{n}(\kk; q)$ is equal to
		the coefficient of $t^{n}$ in the product $\prod_{a=1}^{r}L_{k_{a}}(t)$ of $q$-single polylogarithms.
		From Proposition \ref{prop:shuffle-rel} it holds that
		\begin{align*}
		\prod_{a=1}^{r}L_{k_{a}}(t)=
		\L(e_{k_{r}})\L(e_{k_{1}} \qsh \cdots \qsh e_{k_{r-1}}).
		\end{align*}
		Hence by definition \eqref{eq:def_u} we find that
		\begin{align*}
		\omega_{n}(\kk;q)=\sum_{m=1}^{n-1}u_{n-m}(e_{k_{r}})
		u_{m}(e_{k_{1}} \qsh \cdots \qsh e_{k_{r-1}}).
		\end{align*}
		For $1 \le m <n$ and $k \in \N$, we compute
		\begin{align*}
		u_{n-m}(e_{k})\big|_{q= \zeta_{n}}&=F_{k}(n-m)\big|_{q= \zeta_{n}}=(-1)^{k}\frac{q^{m}}{[m]^{k}} \bigg|_{q=\zeta_n}\\
		&=(-1)^{k}\sum_{j=1}^{k}\binom{k-1}{j-1}(1-q)^{k-j}
		\frac{q^{jm}}{[m]}\bigg|_{q=\zeta_n}.
		\end{align*}
		Note that the element $e_{k_{1}} \qsh \cdots \qsh e_{k_{r-1}}$ belongs to the $\mathcal{C}$-submodule
		$\sum_{l \in \N}e_{l}\widehat{\mathfrak{H}}^{1}$.
		Using the equality
		\begin{align*}
		\frac{q^{jm}}{[m]^{j}}F_{l}(m)\big|_{q=\zeta_n} =F_{l+j}(m)\big|_{q=\zeta_n}
		\end{align*}
		and $a^{j}e_{l}=e_{l+j} \, (j, l \in \N)$, we see that
		\begin{align*}
		\frac{q^{jm}}{[m]^{j}}u_{m}(e_{k_{1}} \qsh \cdots \qsh e_{k_{r-1}})\big|_{q= \zeta_{n}}=
		u_{m}(a^{j}(e_{k_{1}} \qsh \cdots \qsh e_{k_{r-1}}))\big|_{q= \zeta_{n}}.
		\end{align*}
		Therefore by definition \eqref{eq:def_zn} we obtain
		\begin{align*}
		\omega_{n}(\kk; \zeta_{n})&=(-1)^{k_{r}}\sum_{j=1}^{k_{r}}\binom{k_{r}-1}{j-1}(1-q)^{k_{r}-j}
		\sum_{m=1}^{n-1}u_{m}(a^{j}(e_{k_{1}} \qsh \cdots \qsh e_{k_{r-1}}))\big|_{q= \zeta_{n}} \\
		&=(-1)^{k_{r}}\sum_{j=1}^{k_{r}}\binom{k_{r}-1}{j-1}(1-q)^{k_{r}-j}
		z_{n}(a^{j}(e_{k_{1}} \qsh \cdots \qsh e_{k_{r-1}}))\big|_{q=\zeta_n}.
		\end{align*}
		This completes the proof.
	\end{proof}

	We are now in a position to prove \eqref{eq:omega_fmzv}.

	\begin{proof}[Proof of \eqref{eq:omega_fmzv}]
		When $p$ is prime and $q=\zeta_{p}$, for $m<p$
		it holds that $F_{\hat{1}}(m)\equiv m^{-1} \mod{(1-\zeta_p)\Z[\zeta_p]}$ and $F_{k}(m)\equiv m^{-k} \mod{(1-\zeta_p)\Z[\zeta_p]}$ for any $k \in \N$.
		Hence, under the identification $\Z[\zeta_p]/(1-\zeta_p)\Z[\zeta_p] =\F_p$, for any $w \in \widehat{\mathfrak{H}}^{1}$ it holds that
		\begin{align*}
		( z_{p}(w)\big|_{q=\zeta_{p}} \mod{(1-\zeta_p)\Z[\zeta_p]})_{p}=
		\zeta_{\mathcal{A}}(\rho(w)),
		\end{align*}
		where $\rho: \, \widehat{\mathfrak{H}}^{1} \to \mathfrak{h}^{1}$ is the $\Q$-algebra homomorphism given by
		\begin{align*}
		\rho(\hbar)=0, \quad
		\rho(e_{\hat{1}})=y_{1}, \quad
		\rho(e_{k})=y_{k} \,\, (k \in \N).
		\end{align*}
		Note that $\widehat{\mathfrak{H}}^{1}$ (resp. $\mathfrak{h}^{1}$) is invariant under
		the left multiplication with $a$ (resp. $x_0$) because
		$ae_{\hat{1}}=e_{2}-\hbar e_{\hat{1}}, \, ae_{k}=e_{k+1}$ and $x_0 y_{k}=y_{k+1}$ for $k \in \N$.
		These relations imply that
		\begin{align}
		\rho(a e_{\kk})=x_0\rho(e_{\kk})
		\label{eq:left-mul-rho}
		\end{align}
		for any non-empty extended index $\kk \in \widehat{\N}^{r}$.
		Hence, by Theorem \ref{thm:q-Kamano}, for any index $\kk=(k_1,\ldots,k_r)\in \N^r$ we obtain
\begin{align*}
(\omega(\kk;\zeta_p) \mod (1-\zeta_p )\Z[\zeta_p] )_{p}=(-1)^{k_r}
\zeta_{\mathcal{A}}(x_0^{k_r}\rho(e_{k_{1}} \qsh \cdots \qsh e_{k_{r-1}})).
\end{align*}
		Because of Theorem \ref{thm:BTT} (i), it suffices to show that
		\[\rho(w_{1} \qsh w_{2})=\rho(w_{1}) \shuffle \rho(w_{2})\]
		for any $w_{1}, w_{2} \in \widehat{\mathfrak{H}}^{1}$.
		This is done by induction on the sum of weights (degrees) of $w_{1}$ and $w_{2}$.
		We may assume that $w_{1}$ and $w_{2}$ are monomials in $\{e_{k}\}_{k \in \widehat{\N}}$.
		If $w_{1}=1$ or $w_{2}=1$, it is trivial.
		Hence we consider the case where
		$w_{j}=e_{k_{j}}w_{j}' \, (j=1, 2)$ for some $k_{j}' \in \widehat{\N}$.

		First we consider the case $k_{1}=k_{2}=\hat{1}$.
		Then
		\begin{align*}
		w_{1}\qsh w_{2}&=abw_{1}'\qsh abw_{2}'=
		a(bw_{1}'\qsh abw_{2}'+abw_{1}'\qsh bw_{2}'+\hbar bw_{1}'\qsh bw_{2}') \\
		&=e_{\hat{1}}(w_{1}'\qsh e_{\hat{1}}w_{2}'+e_{\hat{1}}w_{1}'\qsh w_{2}')+
		e_{\hat{1}}(e_{1}-e_{\hat{1}})(w_{1}'\qsh w_{2}').
		\end{align*}
		Using the induction hypothesis and $\rho(e_{\hat{1}})=\rho(e_1)=y_1=x_1$ we see that
		\begin{align*}
		\rho(w_{1}\qsh w_{2})&=x_1(\rho(w_{1}')\shuffle x_1\rho(w_{2}')+x_1\rho(w_{1}')\shuffle \rho(w_{2}')) \\
		&=x_1\rho(w_{1}')\shuffle x_1\rho(w_{2}')=\rho(e_{\hat{1}}w_{1}')\shuffle \rho(e_{\hat{1}}w_{2}')=\rho(w_{1})\shuffle \rho(w_{2}).
		\end{align*}
		For the case $(k_{1}, k_{2})=(1, \hat{1})$ or $(1, 1)$
		we obtain the desired equality in a similar way by using
		\begin{align*}
		&
		(e_{1}w_{1}')\qsh (e_{1}w_{2}')=e_{1}(w_{1}'\qsh e_{\hat{1}}w_{2}'+e_{\hat{1}}w_{1}'\qsh w_{2}')+
		e_{1}(e_{1}-e_{\hat{1}})(w_{1}'\qsh w_{2}'), \\
		&
		(e_{1}w_{1}')\qsh (e_{\hat{1}}w_{2}')=e_{1}(w_{1}'\qsh e_{\hat{1}}w_{2}')+
		e_{\hat{1}}(e_{\hat{1}}w_{1}'\qsh w_{2}')+e_{\hat{1}}(e_{1}-e_{\hat{1}})(w_{1}'\qsh w_{2}').
		\end{align*}

		Next we consider the case where $k_{1}=\hat{1}$ and $k_{2} \ge 2$.
		In this case, one can write $w_{2}=a w_{2}''$ with $w_{2}'' \in \widehat{\mathfrak{H}}^{1}$.
		Hence we see that
		\begin{align*}
		w_{1} \qsh w_{2}&=abw_{1}' \qsh aw_{2}''=
		e_{\hat{1}}(w_{1}' \qsh w_{2})+a(w_{1} \qsh w_{2}'')+\hbar e_{\hat{1}}(w_{1}'\qsh w_{2}'').
		\end{align*}
		Hence the induction hypothesis and \eqref{eq:left-mul-rho} imply that
		\begin{align*}
		\rho(w_{1} \qsh w_{2})&=x_1 (\rho(w_{1}') \shuffle  \rho(w_{2}))+x_0 (\rho(w_{1}) \shuffle \rho(w_{2}'')) \\
		&=x_1 (\rho(w_{1}') \shuffle  x_0 \rho(w_{2}''))+x_0 (x_1 \rho(w_{1}') \shuffle \rho(w_{2}'')) \\
		&= x_1\rho(w_{1}') \shuffle  x_0 \rho(w_{2}'')=\rho(w_{1}) \shuffle \rho(w_{2}).
		\end{align*}

		Finally we consider the case where $k_{j} \ge 2 \, (j=1, 2)$.
		We write $w_{j}=a w_{j}''$ with $w_{j}'' \in \widehat{\mathfrak{H}}^{1}$.
		Then it holds that
		\begin{align*}
		w_{1} \qsh w_{2}=a(w_{1}'' \qsh w_{2}+w_{1} \qsh w_{2}''+\hbar \, w_{1}''\qsh w_{2}'').
		\end{align*}
		Using the induction hypothesis and \eqref{eq:left-mul-rho}, we see that
		\begin{align*}
		\rho(w_{1} \qsh w_{2})&=x_0 (\rho(w_{1}'') \shuffle x_0 \rho(w_{2}'')+x \rho(w_{1}'') \shuffle \rho(w_{2}''))=
		x_0 \rho(w_{1}'') \shuffle x_0 \rho(w_{2}'')\\
		&=\rho(w_{1})\shuffle \rho(w_{2}).
		\end{align*}
		This completes the proof.
	\end{proof}

	\begin{rem}\label{rem:Kamano}
		In \cite{Kamano}, Kamano introduced the finite Mordell-Tornheim multiple zeta value for an index $\kk=(k_1,\ldots,k_r)\in \N^r$ by
		\[ \zeta^{MT}_{\mathcal{A}}(\kk) = \left( \sum_{\substack{m_1,\ldots,m_{r-1}\ge1\\m_1+\cdots+m_{r-1}<p}} \frac{1}{m_1^{k_1}\cdots m_{r-1}^{k_{r-1}} (m_1+\cdots+m_{r-1})^{k_r}}\mod{p} \right)_p \in \mathcal{A}\]
		and, as a finite analogue of \eqref{eq:MT_mz}, proved that
		\begin{equation}\label{eq:Kamano} \zeta^{MT}_{\mathcal{A}}(\kk) = \zeta_{\mathcal{A}}(x_0^{k_r}(y_{k_1}\shuffle \cdots \shuffle y_{k_{r-1}})).
		\end{equation}
		Since $\omega_{\mathcal{A}}(\kk)=(-1)^{k_r} \zeta_{\mathcal{A}}^{MT}(\kk)$ for each index $\kk=(k_1,\ldots,k_r)\in \N^r$, Theorem \ref{thm:q-Kamano} can be viewed as a lift of Kamano's result \eqref{eq:Kamano} to the values $\omega_n(\kk;\zeta_n)$.
	\end{rem}

	\subsection{Proof of \eqref{eq:omega_smzv}}
	The proof of \eqref{eq:omega_smzv} is based on identities of words.
	Let $\phi: \mathfrak{h}^{1} \to \R$ be the $\Q$-linear map given by
	$\phi(1)=1$ and
	\begin{align}
	\phi(y_{k_{1}} \cdots y_{k_{r}})=\sum_{a=0}^{r}(-1)^{k_{1}+\cdots +k_{a}}
	(y_{k_{a}}y_{k_{a-1}}\cdots y_{k_{1}}) \shuffle
	(y_{k_{a+1}}y_{k_{a+2}}\cdots y_{k_{r}}).
	\label{eq:def-phi}
	\end{align}

	\begin{thm}\label{thm:identity_words}
		For any index $(k_{1}, \ldots , k_{r}) \in \N^{r}$ with $r \ge 2$, we have
		\begin{align*}
		&\phi((-1)^{k_{r}}x_0^{k_{r}}(y_{k_{1}} \shuffle \cdots \shuffle y_{k_{r-1}}))\\
		&=\sum_{a=1}^{r}(-1)^{k_{a}}x_0^{k_{a}}(y_{k_{1}} \shuffle \cdots \shuffle y_{k_{a-1}}\shuffle y_{k_{a+1}} \shuffle \cdots \shuffle y_{k_{r}}).
		\end{align*}
	\end{thm}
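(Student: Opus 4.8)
The plan is to treat Theorem \ref{thm:identity_words} as a pure identity in the shuffle algebra $\mathfrak{h}^1_\shuffle$, ignoring $\zeta^\shuffle$ entirely; once it is established, applying $\zeta^\shuffle$ and invoking Yamamoto's formula \eqref{eq:MT_mz} together with Theorem \ref{thm:SMZV} yields \eqref{eq:omega_smzv}, since the right-hand side lies in $\mathfrak{h}^0$ and $\zeta^\shuffle\circ\phi$ is exactly the symmetric assignment \eqref{eq:def_zeta_s}. Two structural observations organize the work. First, writing $\bar{R}(y_{l_1}\cdots y_{l_a})=(-1)^{l_1+\cdots+l_a}\,y_{l_a}\cdots y_{l_1}$ for the signed reversal in the alphabet $\{y_k\}$ and $\Delta$ for deconcatenation in that same alphabet, the map $\phi$ of \eqref{eq:def-phi} is precisely the convolution $\phi=m_\shuffle\circ(\bar{R}\otimes\mathrm{id})\circ\Delta$, where $m_\shuffle$ is shuffle multiplication. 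Second, the right-hand side is symmetric under all permutations of $(k_1,\ldots,k_r)$, and each summand is obtained from the argument $x_0^{k_r}(y_{k_1}\shuffle\cdots\shuffle y_{k_{r-1}})$ of $\phi$ by moving $k_a$ into the distinguished last slot; thus the content of the theorem is that $\phi$ restores the full $r$-fold symmetry of a word in which $k_r$ has been singled out.

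I would prove the identity by induction on $r$. The base case $r=2$ is a one-line computation: since $x_0^{k_2}y_{k_1}=y_{k_1+k_2}$ and $\phi(y_m)=(1+(-1)^m)y_m$, the left-hand side equals $(-1)^{k_2}(1+(-1)^{k_1+k_2})y_{k_1+k_2}=((-1)^{k_1}+(-1)^{k_2})y_{k_1+k_2}$, which is exactly $\sum_{a=1}^{2}(-1)^{k_a}x_0^{k_a}y_{k_b}$ with $b\ne a$. For the inductive step the engine should be a single lemma describing how $\phi$ behaves when one further shuffle factor $y_{k_{r-1}}$ is inserted, the distinguished block $x_0^{k_r}$ being present throughout. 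Concretely, I would first analyse $\Delta\bigl(x_0^{k_r}(y_{k_1}\shuffle\cdots\shuffle y_{k_{r-1}})\bigr)$: the deconcatenation term whose cut lies to the far left reproduces the word itself (the $a=0$ contribution to $\phi$), while every other term carries the augmented first letter $y_{k_r+j_1}$ inside the reversed prefix $\bar{R}(\cdot)$. Tracking these contributions and repeatedly applying the first-letter recursion \eqref{eq:def_shuffle} of the shuffle product should express $\phi$ of the $r$-index word through $\phi$ of $(r-1)$-index words, plus explicit ``seam'' corrections arising from the place where $x_0^{k_r}$ attaches to the reversed prefix.

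The main obstacle is precisely this seam. Because the signed reversal $\bar{R}$ inside $\phi$ does not commute with the shuffle product (already $\phi(y_2\shuffle y_1)\ne\phi(y_2)\shuffle\phi(y_1)$, the former being $-4y_2y_1+y_1y_2$ and the latter $0$), the deconcatenation terms do not pass through the inductive hypothesis directly, and the corrections produced when the prefix reversal swallows the block $x_0^{k_r}$ are genuinely nontrivial; even the base case leaves a residual term proportional to $(1-(-1)^{k_r})$. The crux will be to show that, after summing over all cut points and all shuffle interleavings, these corrections reassemble \emph{exactly} into the one missing symmetric summand, namely the term in which $k_r$ occupies the distinguished slot. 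I expect the cleanest way to force this cancellation is a sign-reversing involution on the set of (cut point, interleaving) pairs occurring in $\phi\bigl(x_0^{k_r}(y_{k_1}\shuffle\cdots\shuffle y_{k_{r-1}})\bigr)$, pairing and cancelling every term except those constituting the right-hand side. An equivalent and possibly smoother route is to replace the statement by a strengthened, manifestly symmetric auxiliary identity for general words $x_0^{k}(y_{k_1}\shuffle\cdots\shuffle y_{k_s})$ that is better adapted to the induction than the form as written.
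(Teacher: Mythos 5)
Your structural observations are sound (the description of $\phi$ as the convolution $m_\shuffle\circ(\bar{R}\otimes\mathrm{id})\circ\Delta$ in the alphabet $\{y_k\}$, the symmetry of the right-hand side) and your base case $r=2$ is correct, but there is a genuine gap: the inductive step is never executed. Everything after ``the main obstacle is precisely this seam'' is a statement of intent, not an argument. You correctly diagnose that the deconcatenation terms cannot be fed to the inductive hypothesis because the signed reversal does not respect $\shuffle$, and you then assert that after summing over cut points and interleavings the seam corrections ``reassemble exactly into the one missing symmetric summand'' --- but that reassembly \emph{is} the theorem. The sign-reversing involution that is supposed to force it is never defined, and no evidence is offered that one exists; likewise the ``strengthened, manifestly symmetric auxiliary identity'' is named but never formulated. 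A minor but telling slip: your test computation is wrong --- under the paper's definition \eqref{eq:def-phi} one has $\phi(y_2\shuffle y_1)=3y_2y_1$, not $-4y_2y_1+y_1y_2$ (the non-homomorphy point survives, since $\phi(y_2)\shuffle\phi(y_1)=0$), which illustrates how error-prone this word-by-word bookkeeping is.

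For comparison, the paper does precisely the work your plan leaves open, and does it along the second route you gesture at, in generating-series form. Setting $y(t)=\frac{1}{1-x_0t}x_1=\sum_{k\ge1}y_kt^{k-1}$, the first-letter recursion \eqref{eq:def_shuffle} collapses into the single identity $y(t_1)w\shuffle y(t_2)w'=y(t_1+t_2)\left(w\shuffle y(t_2)w'+y(t_1)w\shuffle w'\right)$ (Lemma \ref{lem:zt-sh}); this yields a closed permutation-sum formula for $y(t_1)\shuffle\cdots\shuffle y(t_r)$ (Proposition \ref{prop:zt-sh-formula1}), and the cancellations you hope an involution would produce are carried out explicitly in Propositions \ref{prop:zt-phi-formula2} and \ref{prop:zt-phi-formula1} by pairing permutations under transpositions $\sigma\mapsto\sigma\cdot(s-a+2,s-a+1)$ and decomposing $\mathfrak{S}_s$ by the value of $\sigma(s)$ --- in effect the involution you postulate, but constructed on generating series, where the telescoping actually closes. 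The final proof of Theorem \ref{thm:identity_words} then combines these two propositions with the partial-fraction identity for $\frac{x_0}{1+x_0t_r}y(\alpha_{r-1})$. Your proposal becomes a proof only once you supply something equivalent to these three statements; as written, the core cancellation is assumed rather than proved.
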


	First, we give a proof of \eqref{eq:omega_smzv}, and then prove Theorem \ref{thm:identity_words} below.

	\begin{proof}[Proof of \eqref{eq:omega_smzv}]
		Since $\zeta^{\shuffle}: \mathfrak{h}^{1}_{\shuffle} \to \R$ is a homomorphism,
		it follows from the definition \eqref{eq:def_zeta_s} that $\zeta_{\mathcal{S}}^{\shuffle}(w)=\zeta^{\shuffle}(\phi(w))$
		for any $w \in \mathfrak{h}^{1}$.
		By Theorem \ref{thm:identity_words} and \eqref{eq:MT_mz}, we see that
		\begin{align*}
		&(-1)^{k_{r}}\zeta_{\mathcal{S}}^{\shuffle}\left(x_0^{k_{r}}(y_{k_{1}} \shuffle \cdots \shuffle y_{k_{r-1}})\right)\\
		&=\zeta^{\shuffle}\left(\phi\left((-1)^{k_{r}}x_0^{k_{r}}(y_{k_{1}} \shuffle \cdots \shuffle y_{k_{r-1}})\right)\right)\\
		&=\sum_{a=1}^{r}(-1)^{k_{a}} \zeta^\shuffle \left(x_0^{k_{a}}(y_{k_{1}} \shuffle \cdots \shuffle y_{k_{a-1}}\shuffle y_{k_{a+1}} \shuffle \cdots \shuffle y_{k_{r}})\right)\\
		&=\sum_{a=1}^{r}(-1)^{k_{a}}\zeta^{MT}(k_1,\ldots,k_{a-1},k_{a+1},\ldots,k_r;k_a),
		\end{align*}
		which by Theorem \ref{thm:SMZV} is equal to $\Omega(k_1,\ldots,k_r)$.
		Taking modulo $\zeta(2)$, we get the desired result.
	\end{proof}

	In what follows, we will prove Theorem \ref{thm:identity_words} by calculating the generating series
	of both sides.
	For that purpose we introduce the element
	\begin{align*}
	y(t)=\sum_{k=1}^{\infty}y_{k}t^{k-1}=\frac{1}{1-x_0t}\,x_1 \, \in \, \mathfrak{h}^{1}[[t]]
	\end{align*}
	with the indeterminate $t$.
	We begin with the following lemma, which is used to prove Propositions \ref{prop:zt-sh-formula1}, \ref{prop:zt-phi-formula2} and \ref{prop:zt-phi-formula1} below, and then, give a proof of Theorem \ref{thm:identity_words}

	\begin{lem}\label{lem:zt-sh}
		For $w, w' \in \mathfrak{h}^{1}$ it holds that
		\begin{align*}
		y(t_{1}) w \shuffle y(t_{2}) w'=y(t_{1}+t_{2})
		\left(w\shuffle y(t_{2})w'+y(t_{1})w \shuffle w' \right).
		\end{align*}
	\end{lem}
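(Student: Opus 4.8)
The plan is to avoid induction entirely and instead exploit the functional equation satisfied by the generating series $y(t)$, turning the shuffle product into a self-referential equation that can be solved in closed form. First I would record that, since $y(t)=\frac{1}{1-x_0t}\,x_1$, we have $(1-x_0t)y(t)=x_1$, i.e.
\[
y(t)=x_1+t\,x_0\,y(t),
\]
an identity in $\mathfrak{h}^{1}[[t]]$. Setting $Y_1=y(t_1)w$ and $Y_2=y(t_2)w'$, this gives $Y_1=x_1w+t_1x_0Y_1$ and $Y_2=x_1w'+t_2x_0Y_2$, which are the only facts about $y$ I will use.

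Next I would expand $Z:=Y_1\shuffle Y_2$ by bilinearity, using these two recursions, into the four pieces $(x_1w)\shuffle(x_1w')$, $t_2\,(x_1w)\shuffle(x_0Y_2)$, $t_1\,(x_0Y_1)\shuffle(x_1w')$, and $t_1t_2\,(x_0Y_1)\shuffle(x_0Y_2)$, and apply the shuffle recursion \eqref{eq:def_shuffle} once to each. Sorting the resulting terms by their leading letter is the crux of the argument: the terms beginning with $x_1$ collapse, after folding $x_1w'+t_2x_0Y_2$ back into $Y_2$ and $x_1w+t_1x_0Y_1$ back into $Y_1$, to
\[
x_1\bigl(w\shuffle Y_2+Y_1\shuffle w'\bigr),
\]
while the terms beginning with $x_0$ reconstitute a copy of $Z$ itself, namely $(t_1+t_2)\,x_0\,(Y_1\shuffle Y_2)=(t_1+t_2)\,x_0Z$.

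This yields the fixed-point equation $Z=x_1G+(t_1+t_2)\,x_0Z$ with $G=w\shuffle Y_2+Y_1\shuffle w'$, whose unique solution (by the formal geometric series for $(1-x_0(t_1+t_2))^{-1}$ in $\mathfrak{h}[[t_1,t_2]]$) is $Z=\frac{1}{1-x_0(t_1+t_2)}\,x_1G=y(t_1+t_2)\,G$. Substituting $Y_1=y(t_1)w$ and $Y_2=y(t_2)w'$ back into $G$ recovers the stated identity.

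The main obstacle is purely the bookkeeping in the middle step: applying the shuffle recursion to the four bilinear pieces and grouping by leading letter so that the $x_0$-group is correctly recognized as $(t_1+t_2)Z$, rather than being left as lower-order terms that would force an induction. I would carry this out by factoring $x_1$ (respectively $x_0$) out of each group and repeatedly using the functional equation to reassemble $Y_1$ and $Y_2$. Once the fixed-point equation is in hand the conclusion is immediate, and no convergence issue arises beyond the standard formal invertibility of $1-x_0(t_1+t_2)$.
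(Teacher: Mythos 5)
Your proposal is correct and is essentially the paper's own proof: the authors likewise write $y(t)=x_1+tx_0y(t)$, expand the shuffle product into the same four bilinear pieces, apply the shuffle recursion once to each, regroup by leading letter so that the $x_1$-terms reassemble into $x_1\left(w\shuffle y(t_2)w'+y(t_1)w\shuffle w'\right)$ and the $x_0$-terms into $(t_1+t_2)x_0$ times the original expression, and then solve the resulting equation by inverting $1-(t_1+t_2)x_0$. There is no substantive difference between your argument and theirs.
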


	\begin{proof}
		We denote the left side by $I$.
		Noting $y(t)=x_1+tx_0y(t)$, one computes
		\begin{align*}
		I&=\left(x_1+t_1x_0y(t_1)\right)w \shuffle \left(x_1+t_2x_0y(t_2)\right)w' \\
		&=x_1w \shuffle  x_1w' + t_1x_0y(t_1)w\shuffle x_1w'+ x_1w \shuffle   t_2x_0y(t_2)w' + t_1x_0y(t_1)w\shuffle t_2x_0y(t_2)w' \\
		&= x_1(w\shuffle x_1w')+x_1(x_1w\shuffle w') \\
		&+t_1x_0 (y(t_1)w \shuffle x_1 w')+  t_1 x_1(x_0y(t_1)w\shuffle w')\\
		&+t_2x_1(w\shuffle x_0 y(t_2)w') + t_2 x_0 (x_1w\shuffle y(t_2)w')\\
		&+t_1t_2x_0(y(t_1)w\shuffle x_0 y(t_2)w' )+t_1t_2x_0( x_0y(t_1)w\shuffle y(t_2)w')\\
		&=x_0\left\{t_1y(t_1)w\shuffle x_1 w' + x_1w\shuffle t_2 y(t_2) w'+ t_1y(t_1)w\shuffle x_0t_2 y(t_2)w'+ t_1x_0y(t_1)w\shuffle t_2y(t_2)w'\right\}\\
		&+x_1\left\{ w\shuffle x_1w'+x_1w\shuffle w' + x_0t_1y(t_1)w\shuffle w'+ w\shuffle x_0 t_2y(t_2)w'\right\}\\
		&= x_0 \left( t_1y(t_1)w\shuffle y(t_2) w' +t_2 y(t_1)w\shuffle y(t_2)w' \right) \\
		&+x_1\left(
		w  \shuffle y(t_{2})w' +y(t_{1})w \shuffle w'  \right)\\
		&=(t_{1}+t_{2})x_0I+  x_1\left(
		w  \shuffle y(t_{2})w' +y(t_{1})w \shuffle w'  \right).
		\end{align*}
		Hence, $(1-(t_1+t_2)x_0)I=x_1 \left(
		w  \shuffle y(t_{2})w' +y(t_{1})w \shuffle w'  \right)$, which implies the desired formula.
	\end{proof}

	From Lemma \ref{lem:zt-sh} one can compute
	\[y(t_1)\shuffle y(t_2) = y(t_{1}+t_{2})
	\left( y(t_{2})+y(t_{1}) \right)\]
	and
	\begin{align*}
	(y(t_1)\shuffle y(t_2))\shuffle y(t_3)&=y(t_{1}+t_{2})
	\left( y(t_{2})+y(t_{1}) \right)\shuffle y(t_3)\\
	&=y(t_{1}+t_{2}) y(t_1) \shuffle y(t_3) + y(t_{1}+t_{2}) y(t_2) \shuffle y(t_3) \\
	&=y(t_1+t_2+t_3)(y(t_1)\shuffle y(t_3)+y(t_1+t_2)y(t_1))\\
	&+y(t_1+t_2+t_3)(y(t_2)\shuffle y(t_3)+y(t_1+t_2)y(t_2))\\
	&=y(t_1+t_2+t_3)\left( y(t_1+t_3)(y(t_1)+y(t_3))+y(t_1+t_2)y(t_1)\right)\\
	&+y(t_1+t_2+t_3)\left( y(t_2+t_3)(y(t_2)+y(t_3))+y(t_1+t_2)y(t_2)\right)\\
	&=y(t_1+t_2+t_3)y(t_1+t_2)y(t_1) + y(t_1+t_2+t_3)y(t_1+t_3)y(t_1) \\
	&+y(t_1+t_2+t_3)y(t_1+t_2)y(t_2) + y(t_1+t_2+t_3)y(t_2+t_3)y(t_2)\\
	&+y(t_1+t_2+t_3)y(t_2+t_3)y(t_3) + y(t_1+t_2+t_3)y(t_1+t_3)y(t_3).
	\end{align*}
	Using Lemma \ref{lem:zt-sh} repeatedly, we obtain the following formula
	for the shuffle product of $y(t)$:

	\begin{prop}\label{prop:zt-sh-formula1}
		It holds that
		\begin{align*}
		y(t_{1}) \shuffle \cdots \shuffle y(t_{r})=\sum_{\sigma \in \mathfrak{S}_{r}}
		\prod_{1\le j \le r}^{\curvearrowleft}y\left({\textstyle \sum_{i=1}^{j}t_{\sigma(i)}} \right),
		\end{align*}
		where $\prod_{m \le j \le n}^{\curvearrowleft}X_{j}$ denotes
		the ordered product $X_{n}X_{n-1}\cdots X_{m}$ for $m \le n$.
	\end{prop}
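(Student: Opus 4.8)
The plan is to prove Proposition \ref{prop:zt-sh-formula1} by induction on $r$, peeling off the last factor and using Lemma \ref{lem:zt-sh} as the only structural input, exactly in the spirit of the worked cases $r=2,3$ displayed above. For $r=1$ both sides equal $y(t_1)$. For the inductive step I would write $y(t_1)\shuffle\cdots\shuffle y(t_r)=y(t_r)\shuffle Q_{r-1}$ with $Q_{r-1}=y(t_1)\shuffle\cdots\shuffle y(t_{r-1})$, substitute the induction hypothesis $Q_{r-1}=\sum_{\rho\in\mathfrak{S}_{r-1}}\prod_{1\le j\le r-1}^{\curvearrowleft}y(\sum_{i=1}^{j}t_{\rho(i)})$, and thereby reduce the problem to computing $y(t_r)\shuffle M_\rho$ for a single monomial $M_\rho=y(u_1)\cdots y(u_{r-1})$, where $u_p=\sum_{i=1}^{r-p}t_{\rho(i)}$ (so that $u_1$ is the full sum $t_{\rho(1)}+\cdots+t_{\rho(r-1)}$ and $u_{r-1}=t_{\rho(1)}$).

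The core computation is an \emph{insertion identity}: for parameters $s,u_1,\dots,u_m$ one has
\[
y(s)\shuffle\bigl(y(u_1)\cdots y(u_m)\bigr)=\sum_{k=1}^{m}y(s+u_1)\cdots y(s+u_k)\,y(u_k)y(u_{k+1})\cdots y(u_m)+y(s+u_1)\cdots y(s+u_m)\,y(s).
\]
To prove it I would run a secondary induction on $m$: Lemma \ref{lem:zt-sh} with $w=1$ gives $y(s)\shuffle(y(u_1)w')=y(s+u_1)\bigl(y(u_1)w'+y(s)\shuffle w'\bigr)$, and unrolling this recursion through the tail $w'=y(u_2)\cdots y(u_m)$ produces precisely the $m+1$ displayed terms. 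Note that every term carries the common new leading factor $y(s+u_1)$, whose argument is the total $s+u_1$.

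Finally I would match each of these $m+1$ terms (with $s=t_r$, $m=r-1$) to a monomial $M_\sigma$ on the right-hand side of the Proposition. Writing $M_\sigma=y(v_1)\cdots y(v_r)$ with $v_p=\sum_{i=1}^{r-p+1}t_{\sigma(i)}$, the term indexed by $k$ corresponds to inserting the symbol $r$ into the word $(\rho(1),\dots,\rho(r-1))$ at position $r-k+1$, and the extra last term to inserting $r$ in front; one checks that $v_1=t_1+\cdots+t_r$ is always the total, that $v_k-v_{k+1}=t_r$ pins down $\sigma^{-1}(r)$, and that the remaining partial sums reproduce those coming from $\rho$. Since deleting $r$ from $\sigma\in\mathfrak{S}_r$ recovers a unique $\rho\in\mathfrak{S}_{r-1}$ together with the position of $r$, the standard bijection $\mathfrak{S}_r\cong\mathfrak{S}_{r-1}\times\{1,\dots,r\}$ shows that, as $\rho$ and $k$ vary, every $\sigma\in\mathfrak{S}_r$ arises exactly once; summing over $\rho$ then yields the claimed formula. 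I expect the main obstacle to be precisely this last bookkeeping step, namely verifying that depth-$k$ insertion of $t_r$ reproduces the exact partial-sum pattern $v_p=\sum_{i=1}^{r-p+1}t_{\sigma(i)}$ and that the insertion map is a clean bijection, rather than any difficulty with the algebra, which is entirely driven by Lemma \ref{lem:zt-sh}.
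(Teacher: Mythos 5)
Your proof is correct and follows essentially the same route as the paper: the paper simply computes the cases $r=2,3$ and then asserts the general formula ``using Lemma \ref{lem:zt-sh} repeatedly,'' which is exactly your induction of peeling off $y(t_r)$ and unrolling the lemma into the insertion identity. Your explicit bookkeeping via the bijection $\mathfrak{S}_r\cong\mathfrak{S}_{r-1}\times\{1,\ldots,r\}$ (recording the position of the value $r$) is precisely the detail the paper leaves implicit, and it checks out.
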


	We denote by $[r]$ the set $\{1, 2, \ldots , r\}$ for $r \ge 1$.
	For a subset $I=\{p_{1}, \ldots , p_{s}\}$ of $[r]$ with $|I|=s$,
	we define
	\begin{align*}
	S_{I}(t_{1}, \ldots , t_{r})&=y(t_{p_{1}}) \shuffle \cdots \shuffle y(t_{p_{s}})\\
	&=\sum_{\sigma\in \mathfrak{S}_{s}}\prod_{1\le j\le s}^{\curvearrowleft}y\left({\textstyle \sum_{i=1}^{j}t_{p_{\sigma(i)}}} \right).
	\end{align*}

	\begin{prop}\label{prop:zt-phi-formula2}
		For any subset $I=\{p_{1}, \ldots , p_{s}\}$ of $[r]$ with $|I|=s$, it holds that
		\begin{align*}
		\phi(y(u)S_{I}(t_{1}, \ldots , t_{r}))&=
		y(u)S_{I}(t_{1}, \ldots , t_{r})-y(-u)\shuffle S_{I}(t_{1}, \ldots , t_{r}) \\
		&+\sum_{b\in I}y(-t_{b})
		\left(y(-u)\shuffle S_{I\setminus\{b\}}(t_{1}, \ldots , t_{r}) \right).
		\end{align*}
	\end{prop}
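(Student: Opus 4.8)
The plan is to introduce an auxiliary family of $\Q$-linear operators that isolates the effect of prepending $y(u)$ in the definition \eqref{eq:def-phi} of $\phi$, to establish one recursion for these operators and one for $S_I$, and then to prove a \emph{generalized} version of the identity by induction on $s=|I|$. First I would record the behaviour of $\phi$ under prepending $y(u)$. For a $y$-word $w=y_{k_1}\cdots y_{k_m}$ and $v\in\mathfrak{h}^{1}$ set
\[
\Theta_v(w)=\sum_{w=w'w''}(-1)^{\wt(w')}\,\overleftarrow{w'}\,v\shuffle w'',
\]
the sum running over deconcatenations $w=w'w''$ into $y$-words, where $\overleftarrow{w'}=y_{k_b}\cdots y_{k_1}$ is the reversal of $w'=y_{k_1}\cdots y_{k_b}$. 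Applying \eqref{eq:def-phi} to $y_kw$ and summing against $u^{k-1}$, the term $a=0$ produces $y(u)w$, while in every term with $a\ge 1$ the prepended letter lands at the right end of the reversed initial block; since $\sum_{k\ge1}(-1)^ku^{k-1}y_k=-y(-u)$, this reorganizes into the master formula $\phi(y(u)w)=y(u)\,w-\Theta_{y(-u)}(w)$. Thus the problem reduces to computing $\Theta_{y(-u)}(S_I)$.

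Next I would prove the two recursions driving the induction. Summing the single-letter identity $\Theta_v(y_jw_1)=v\shuffle y_jw_1+(-1)^j\Theta_{y_jv}(w_1)$, read directly off the definition of $\Theta_v$, against $t^{j-1}$ and using $\sum_j(-1)^jt^{j-1}y_j=-y(-t)$ yields
\[
\Theta_v\bigl(y(t)w_1\bigr)=v\shuffle y(t)w_1-\Theta_{y(-t)v}(w_1),
\]
which is $\Q$-linear in both $v$ and $w_1$. On the $S_I$ side, Lemma \ref{lem:zt-sh}, used exactly as in the displayed $s=2,3$ computations preceding Proposition \ref{prop:zt-sh-formula1}, gives the leading-factor recursion
\[
S_I=y(T)\sum_{b\in I}S_{I\setminus\{b\}},\qquad T=\textstyle\sum_{p\in I}t_p,
\]
exhibiting $S_I$ as $y(T)$ concatenated with $\sum_{b\in I}S_{I\setminus\{b\}}$.

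The heart of the argument is an induction on $s=|I|$ of the identity $\Theta_v(S_I)=v\shuffle S_I-\sum_{b\in I}y(-t_b)(v\shuffle S_{I\setminus\{b\}})$ for \emph{arbitrary} $v\in\mathfrak{h}^{1}$; the Proposition is the case $v=y(-u)$ together with the master formula. Allowing a general $v$ is precisely what lets the induction close, since the $\Theta$-recursion replaces the tail $v$ by $y(-T)v$. For the inductive step, combining the two recursions gives $\Theta_v(S_I)=v\shuffle S_I-\sum_{b\in I}\Theta_{y(-T)v}(S_{I\setminus\{b\}})$; applying the induction hypothesis with tail $y(-T)v$ to each size-$(s-1)$ term expands this as
\[
v\shuffle S_I-\sum_{b\in I}y(-T)v\shuffle S_{I\setminus\{b\}}+\sum_{b\in I}\sum_{c\in I\setminus\{b\}}y(-t_c)\bigl(y(-T)v\shuffle S_{I\setminus\{b,c\}}\bigr),
\]
and Lemma \ref{lem:zt-sh}, applied to the leading factor $y(T-t_b)$ of $S_{I\setminus\{b\}}$, rewrites each summand $y(-T)v\shuffle S_{I\setminus\{b\}}$ as $y(-t_b)(v\shuffle S_{I\setminus\{b\}})+y(-t_b)\bigl(y(-T)v\shuffle\sum_{c\in I\setminus\{b\}}S_{I\setminus\{b,c\}}\bigr)$.

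I expect the main obstacle to be the bookkeeping in this final step. After the substitutions one is left, besides the wanted term $-\sum_{b\in I}y(-t_b)(v\shuffle S_{I\setminus\{b\}})$, with two double sums over ordered pairs $b\neq c$, namely $-\sum_{b}\sum_{c\neq b}y(-t_b)\bigl(y(-T)v\shuffle S_{I\setminus\{b,c\}}\bigr)$ and $+\sum_{b}\sum_{c\neq b}y(-t_c)\bigl(y(-T)v\shuffle S_{I\setminus\{b,c\}}\bigr)$. Since $S_{I\setminus\{b,c\}}$ is symmetric in $b$ and $c$, relabelling $b\leftrightarrow c$ identifies these two sums, so they cancel and leave exactly the claimed right-hand side. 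Verifying that all signs and index ranges in this cancellation are correct, and that the generalization to arbitrary $v$ is propagated consistently through both recursions, is the only genuinely delicate point; everything else is a direct application of \eqref{eq:def-phi}, Lemma \ref{lem:zt-sh}, and $\Q$-linearity.
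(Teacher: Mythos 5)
Your proposal is correct, but it takes a genuinely different route from the paper's own proof. The paper argues by direct expansion: it writes $\phi$ on generating series as $\phi(y(s_1)\cdots y(s_r))=\sum_{a=0}^r(-1)^a\,y(-s_a)\cdots y(-s_1)\shuffle y(s_{a+1})\cdots y(s_r)$, expands $\phi(y(u)S_I)$ via Proposition \ref{prop:zt-sh-formula1} into a triple sum over permutations $\sigma\in\mathfrak{S}_s$ and cut points $a$, and then cancels almost everything by pairing terms under transpositions $\sigma\to\sigma(s-a+2,s-a+1)$, with the surviving $a=2$ terms reassembled via the decomposition $\mathfrak{S}_s=\sqcup_{l}\mathfrak{S}_s^l$ into the claimed right-hand side. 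You instead package the combinatorics into the deconcatenation operator $\Theta_v$, reduce the proposition to the strengthened identity $\Theta_v(S_I)=v\shuffle S_I-\sum_{b\in I}y(-t_b)\left(v\shuffle S_{I\setminus\{b\}}\right)$ for arbitrary $v$ via the master formula $\phi(y(u)w)=y(u)w-\Theta_{y(-u)}(w)$, and prove that identity by induction on $|I|$ using the two recursions $\Theta_v(y(t)w_1)=v\shuffle y(t)w_1-\Theta_{y(-t)v}(w_1)$ and $S_I=y(T)\sum_{b\in I}S_{I\setminus\{b\}}$. I checked the master formula, both recursions, the base case, the application of Lemma \ref{lem:zt-sh} to $y(-T)v\shuffle S_{I\setminus\{b\}}$, and the final cancellation of the two double sums by the symmetry $S_{I\setminus\{b,c\}}=S_{I\setminus\{c,b\}}$: all are sound, and the key idea of generalizing the tail to arbitrary $v$ is exactly what lets the induction close when the tail becomes $y(-T)v$. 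Your approach buys a cleaner structure (two transparent recursions instead of permutation-level sign bookkeeping) and extra generality: since $\phi=\Theta_1$, your identity at $v=1$ immediately yields Proposition \ref{prop:zt-phi-formula1}, which the paper proves by re-running a similar computation; the paper's approach, in exchange, needs no auxiliary operator and stays within the objects already introduced. If you write yours up, do state the induction hypothesis for $v\in\mathfrak{h}^1$ and note that it, the recursions, and Lemma \ref{lem:zt-sh} extend coefficient-wise to series such as $y(-T)v$ — routine linearity, but it should be said.
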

	\begin{proof}
		{}From the definition \eqref{eq:def-phi} of the map $\phi$, we see that
		\begin{align*}
		\phi(y(s_{1})\cdots y(s_{r}))&=\sum_{k_1,\ldots,k_r\ge1} \phi(y_{k_1}\cdots y_{k_r})s_1^{k_1-1}\cdots s_r^{k_r-1}\\
		&=\sum_{a=0}^r (-1)^{k_1+\cdots+k_a} \sum_{k_1,\ldots,k_r\ge1} y_{k_a}\cdots y_{k_1}\shuffle y_{k_{a+1}}\cdots y_{k_r} s_1^{k_1-1}\cdots s_r^{k_r-1}\\
		&=\sum_{a=0}^r(-1)^a \underbrace{y(-s_a)\cdots y(-s_1)}_a\shuffle \underbrace{y(s_{a+1})\cdots y(s_r)}_{r-a}\\
		&=\sum_{a=0}^r(-1)^a \left(\prod_{1 \le j \le a}^{\curvearrowleft} y(-s_j) \right)  \shuffle
		\left(\prod_{a+1 \le j \le r}^{\curvearrowright} y(s_j) \right).
		\end{align*}
		For simplicity we set $\alpha_{j}^{\sigma}=\sum_{i=1}^{j}t_{p_{\sigma(i)}}$
		for $1 \le j \le s$ and $\sigma \in \mathfrak{S}_{s}$.
		Proposition \ref{prop:zt-sh-formula1} and the above formula imply that
		\begin{equation}\label{eq:step1}
		\begin{aligned}
		&\phi(y(u)S_{I}(t_{1}, \ldots , t_{r})) \\
		&=\sum_{\sigma\in \mathfrak{S}_s}
		 \phi (y(u)y(\alpha_s^{\sigma})\cdots y(\alpha_1^{\sigma}))\\
		&=\sum_{\sigma\in \mathfrak{S}_s} y(u)y(\alpha_s^{\sigma})\cdots y(\alpha_1^{\sigma}) \\
		&+(-1)^1
		 \sum_{\sigma\in \mathfrak{S}_s} y(-u)\shuffle y(\alpha_s^{\sigma})\cdots y(\alpha_1^{\sigma})\\
		&+\sum_{a=2}^{s+1}(-1)^a \sum_{\sigma\in \mathfrak{S}_s} \left( \left\{ \prod_{s-a+2 \le j\le s}^{\curvearrowright}y(-\alpha_{j}^\sigma)\right\} y(-u) \right) \shuffle \left( \prod_{1 \le j\le s-a+1}^{\curvearrowleft}y(\alpha_{j}^\sigma)\right) .
		\end{aligned}
		\end{equation}
		Applying Lemma \ref{lem:zt-sh} except for the first two terms and for the last term with $a=s+1$, we obtain
		\begin{align*}
		&=y(u)S_{I}(t_1,\ldots,t_r)- y(-u)\shuffle S_{I}(t_1,\ldots,t_r)\\
		&+\sum_{\sigma\in \mathfrak{S}_s} \sum_{a=2}^{s} (-1)^a y(-t_{p_{\sigma(s-a+2)}}) \left( \left\{ \prod_{s-a+2 \le j\le s}^{\curvearrowright}y(-\alpha_{j}^\sigma)\right\} y(-u) \right) \shuffle \left( \prod_{1 \le j\le s-a}^{\curvearrowleft}y(\alpha_{j}^\sigma)\right)\\
		&+\sum_{\sigma\in \mathfrak{S}_s} \sum_{a=2}^{s} (-1)^a y(-t_{p_{\sigma(s-a+2)}}) \left( \left\{ \prod_{s-a+3 \le j\le s}^{\curvearrowright}y(-\alpha_{j}^\sigma)\right\} y(-u) \right) \shuffle \left( \prod_{1 \le j\le s-a+1}^{\curvearrowleft}y(\alpha_{j}^\sigma)\right)\\
		&+(-1)^{s+1} y(-\alpha_1^{\sigma})\cdots y(-\alpha_s^\sigma)y(-u)
		\end{align*}
		In the third term with $3 \le a \le s$,
		change $a\to a+1$ and $\sigma \to \sigma'=\sigma (s-a+2,s-a+1)$,
		where $(s-a+2,s-a+1) \in \mathfrak{S}_{s}$ is the transposition.
		Then, since $\alpha_j^{\sigma'}=\alpha_j^\sigma \ (j\neq s-a+1)$, it cancels with the second term except for the term with $a=s$.
		We see that the second term with $a=s$ also cancels with the last term by changing $\sigma \to \sigma (1, 2)$.
		For the third term with $a=2$, decomposing the range $\mathfrak{S}_{s}$ as
		$\sqcup_{l=1}^{s}\mathfrak{S}_{s}^{l}$,
		where $\mathfrak{S}_{s}^{l}=\{\sigma \in \mathfrak{S}_{s} \, | \, \sigma(s)=l \}$, we get
		\begin{align*}
		&=y(u)S_{I}(t_1,\ldots,t_r)- y(-u)\shuffle S_{I}(t_1,\ldots,t_r)\\
		&+\sum_{l=1}^s y(-t_{p_l})(y(-u)\shuffle\sum_{\sigma\in \mathfrak{S}_s^l} y(\alpha_{s-1}^\sigma)\cdots y(\alpha_1^\sigma))\\
		&=y(u)S_{I}(t_1,\ldots,t_r)- y(-u)\shuffle S_{I}(t_1,\ldots,t_r)\\
		&+\sum_{b\in I} y(-t_b) ( y(-u) \shuffle S_{I\setminus\{b\}}(t_1,\ldots,t_r)),
		\end{align*}
		which completes the proof.
	\end{proof}

	In the same way as above, we obtain the following formula.
	\begin{prop}\label{prop:zt-phi-formula1}
		For any subset $I$ of $[r]$, it holds that
		\begin{align*}
		\phi(S_{I}(t_{1}, \ldots , t_{r}))&=
		S_{I}(t_{1}, \ldots , t_{r})-\sum_{b\in I}y(-t_{b})
		S_{I\setminus\{b\}}(t_{1}, \ldots , t_{r}) .
		\end{align*}
	\end{prop}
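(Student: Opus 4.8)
The plan is to reproduce, in the simpler setting without the extra factor $y(u)$, the computation carried out in the proof of Proposition \ref{prop:zt-phi-formula2}. First I would expand $S_{I}(t_{1},\ldots,t_{r})$ by Proposition \ref{prop:zt-sh-formula1} as $\sum_{\sigma\in\mathfrak{S}_{s}}y(\alpha_{s}^{\sigma})\cdots y(\alpha_{1}^{\sigma})$ with $\alpha_{j}^{\sigma}=\sum_{i=1}^{j}t_{p_{\sigma(i)}}$, and then apply $\phi$ term by term using the generating-series expression for $\phi$ established at the beginning of the proof of Proposition \ref{prop:zt-phi-formula2}. This yields
\[
\phi(S_{I})=\sum_{\sigma\in\mathfrak{S}_{s}}\sum_{a=0}^{s}(-1)^{a}\Big(\prod_{s-a+1\le j\le s}^{\curvearrowright}y(-\alpha_{j}^{\sigma})\Big)\shuffle\Big(\prod_{1\le j\le s-a}^{\curvearrowleft}y(\alpha_{j}^{\sigma})\Big),
\]
whose $a=0$ summand is exactly $S_{I}$.

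For each $a$ with $1\le a\le s-1$ both shuffle factors start with a letter $y(\cdot)$, so I would apply Lemma \ref{lem:zt-sh}; since $-\alpha_{s-a+1}^{\sigma}+\alpha_{s-a}^{\sigma}=-t_{p_{\sigma(s-a+1)}}$, each such summand becomes $(-1)^{a}\,y(-t_{p_{\sigma(s-a+1)}})\,(P_{a}^{\sigma}+Q_{a}^{\sigma})$, where $P_{a}^{\sigma}$ is the ``left-shortened'' shuffle (leading $y(-\alpha_{s-a+1}^{\sigma})$ removed) and $Q_{a}^{\sigma}$ the ``right-shortened'' one (leading $y(\alpha_{s-a}^{\sigma})$ removed). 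The summand $a=s$, whose right factor is empty, is kept aside; rewriting it with $\alpha_{1}^{\sigma}=t_{p_{\sigma(1)}}$ as $(-1)^{s}y(-t_{p_{\sigma(1)}})\,y(-\alpha_{2}^{\sigma})\cdots y(-\alpha_{s}^{\sigma})$ exhibits it as precisely the term one would call $P_{s}^{\sigma}$, so the list of pieces is uniformly $\{P_{a}^{\sigma}\}_{1\le a\le s}\cup\{Q_{a}^{\sigma}\}_{1\le a\le s-1}$.

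The heart of the argument is the telescoping cancellation $Q_{a}^{\sigma}\leftrightarrow P_{a+1}^{\sigma}$. I would check that the reindexing $\sigma\mapsto\sigma\cdot(s-a,\,s-a+1)$ sends $\sigma(s-a+1)$ to position $s-a$, hence matches the prefactor $y(-t_{p_{\sigma(s-a+1)}})$ against $y(-t_{p_{\sigma'(s-a)}})$, leaves unchanged every partial sum $\alpha_{j}^{\sigma}$ that actually appears in $Q_{a}^{\sigma}$ (the only altered one, $\alpha_{s-a}^{\sigma}$, being skipped in both bodies), and carries $Q_{a}^{\sigma}$ to $P_{a+1}^{\sigma}$ with the opposite sign. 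Summing over $\sigma\in\mathfrak{S}_{s}$ these pairs cancel for $1\le a\le s-1$, leaving $S_{I}$ together with the unique survivor $P_{1}^{\sigma}$, whose own left factor is empty. Its contribution is $-\sum_{\sigma}y(-t_{p_{\sigma(s)}})\,y(\alpha_{s-1}^{\sigma})\cdots y(\alpha_{1}^{\sigma})$; decomposing $\mathfrak{S}_{s}=\bigsqcup_{l}\mathfrak{S}_{s}^{l}$ with $\mathfrak{S}_{s}^{l}=\{\sigma:\sigma(s)=l\}$ and recognizing $\sum_{\sigma\in\mathfrak{S}_{s}^{l}}y(\alpha_{s-1}^{\sigma})\cdots y(\alpha_{1}^{\sigma})=S_{I\setminus\{p_{l}\}}$ then gives $-\sum_{b\in I}y(-t_{b})S_{I\setminus\{b\}}$, as claimed.

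I expect the main obstacle to be the bookkeeping of this telescoping rather than any substantial new idea: one must verify that the transposition reindexing genuinely fixes all partial sums entering $Q_{a}^{\sigma}$, and must handle the two boundary levels correctly, namely that the unsplit $a=s$ term plays the role of $P_{s}^{\sigma}$ and that $P_{1}^{\sigma}$ is the sole survivor. The $s=1$ case ($\phi(y(t_{p}))=y(t_{p})-y(-t_{p})$) can serve as a useful sanity check for the signs. Everything else is the mechanical application of Lemma \ref{lem:zt-sh} already rehearsed in Proposition \ref{prop:zt-phi-formula2}.
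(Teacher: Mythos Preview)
Your proposal is correct and follows essentially the same route as the paper's proof: expand $S_I$ via Proposition~\ref{prop:zt-sh-formula1}, apply the generating-series form of $\phi$, separate the $a=0$ and $a=s$ terms, use Lemma~\ref{lem:zt-sh} for $1\le a\le s-1$, and telescope the resulting two families via the transposition $(s-a,s-a+1)$, leaving only the $a=0$ term $S_I$ and the survivor $P_1^\sigma$. The paper's write-up is terser (it simply says ``in the same way as above'' and displays the computation), but your bookkeeping of $P_a^\sigma$, $Q_a^\sigma$, the identification of the $a=s$ term with $P_s^\sigma$, and the coset decomposition $\mathfrak{S}_s=\bigsqcup_l\mathfrak{S}_s^l$ all match what the paper does.
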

	\begin{proof}
		We use the same notation with the proof of Proposition \ref{prop:zt-phi-formula2}.
		Replacing $y(u)$ with 1 in \eqref{eq:step1} and doing the same game, we get
		\begin{align*}
		&\phi(S_{I}(t_{1}, \ldots , t_{r}))\\
		&=\sum_{\sigma \in \mathfrak{S}_{s}}
		\sum_{a=0}^{s}(-1)^{a}
		\left(\prod_{s-a+1 \le j \le s}^{\curvearrowright} y(-\alpha_{j}^{\sigma}) \right)  \shuffle
		\left(\prod_{1 \le j \le s-a}^{\curvearrowleft} y(\alpha_{j}^{\sigma}) \right)\\
		&= \sum_{\sigma \in \mathfrak{S}_{s}}
		\prod_{1 \le j \le s}^{\curvearrowleft} y(\alpha_{j}^{\sigma}) \\
		&+\sum_{\sigma \in \mathfrak{S}_{s}}\sum_{a=1}^{s-1}(-1)^{a}y(-t_{p_{\sigma(s-a+1)}})
		\left(\prod_{s-a+2 \le j \le s}^{\curvearrowright} y(-\alpha_{j}^{\sigma}) \right)  \shuffle
		\left(\prod_{1 \le j \le s-a}^{\curvearrowleft} y(\alpha_{j}^{\sigma}) \right) \\
		&+\sum_{\sigma \in \mathfrak{S}_{s}}\sum_{a=1}^{s-1}(-1)^{a}y(-t_{p_{\sigma(s-a+1)}})
		\left(\prod_{s-a+1 \le j \le s}^{\curvearrowright} y(-\alpha_{j}^{\sigma}) \right)  \shuffle
		\left(\prod_{1 \le j \le s-a-1}^{\curvearrowleft} y(\alpha_{j}^{\sigma}) \right) \\
		&+\sum_{\sigma \in \mathfrak{S}_{s}}(-1)^{s}
		\prod_{1 \le j \le s}^{\curvearrowright} y(-\alpha_{j}^{\sigma})\\
		&=\sum_{\sigma \in \mathfrak{S}_{s}}
		\prod_{1 \le j \le s}^{\curvearrowleft} y(\alpha_{j}^{\sigma})-
		\sum_{\sigma \in \mathfrak{S}_{s}}y(-t_{p_{\sigma(s)}})
		\prod_{1 \le j \le s-1}^{\curvearrowleft} y(\alpha_{j}^{\sigma})\\
		&=
		S_{I}(t_{1}, \ldots , t_{r})-\sum_{b\in I}y(-t_{b})
		S_{I\setminus\{b\}}(t_{1}, \ldots , t_{r}).
		\end{align*}
		We complete the proof.
	\end{proof}

	We are ready to prove Theorem \ref{thm:identity_words}.

	\begin{proof}[Proof of Theorem \ref{thm:identity_words}]
		Let
		\[
		J(t_1,\ldots,t_{r-1},t_r)
		:=\sum_{k_1,\ldots,k_r\ge1} (-1)^{k_r}x_0^{k_r} ( y_{k_1}\shuffle \cdots \shuffle y_{k_{r-1}})
		t_1^{k_1-1}\cdots t_{r-1}^{k_{r-1}-1}t_r^{k_r-1}.
		\]
		Then, we have
		\begin{align*}
		&
		\sum_{k_1,\ldots,k_r\ge1} \sum_{a=1}^{r}(-1)^{k_{a}}x_0^{k_{a}}
		(y_{k_{1}} \shuffle \cdots \shuffle y_{k_{a-1}}\shuffle y_{k_{a+1}} \shuffle \cdots \shuffle y_{k_{r}})
		t_1^{k_1-1}\cdots t_r^{k_r-1}\\
		&=\sum_{a=1}^r
		J(\underbrace{t_1,\ldots,t_{a-1}}_{a-1},\underbrace{t_{a+1},\ldots,t_r}_{r-a},t_a)
		\end{align*}
		Therefore, for $r\ge2$, it suffices to show that
		\begin{equation}\label{eq:gen_identity}
		\phi \big( J(t_1,\ldots,t_r)\big)=
		\sum_{a=1}^{r}
		J(\underbrace{t_1,\ldots,t_{a-1}}_{a-1},\underbrace{t_{a+1},\ldots,t_r}_{r-a},t_a).
		\end{equation}
		For simplicity we set
		\[ \alpha_{j}=\sum_{i=1}^{j}t_{i} \ (1 \le j \le r) \ \mbox{and}\ \alpha_{j}^{\sigma}=\sum_{i=1}^{j}t_{\sigma(i)} \ (1 \le j \le r-1, \sigma \in \mathfrak{S}_{r-1}).\]
		From Proposition \ref{prop:zt-sh-formula1}, the left-hand side of \eqref{eq:gen_identity} can be computed as follows:
		\begin{align*}
		\phi \big( J(t_1,\ldots,t_r)\big)&=-\phi
		\left(\frac{x_0}{1+x_0t_r}  (y(t_1)\shuffle \cdots \shuffle y(t_{r-1}))\right)\\
		&=-\phi\left(\frac{x_0}{1+x_0t_r} \sum_{\sigma\in \mathfrak{S}_{r-1}}  y(\alpha_{r-1}^\sigma)\cdots y(\alpha_{2}^\sigma)y(\alpha_{1}^\sigma)\right)\\
		&=-\sum_{\sigma\in \mathfrak{S}_{r-1}} \phi\left(\frac{x_0}{1+x_0t_r}   y(\alpha_{r-1}) y(\alpha_{r-2}^\sigma)\cdots y(\alpha_{2}^\sigma)y(\alpha_{1}^\sigma)\right).
		\end{align*}
		Since
		\begin{align*}
		\frac{x_0}{1+x_0t_r}   y(\alpha_{r-1})  &= -\frac{1}{\alpha_r} \left( \frac{1}{1+x_0t_r}x_1 - \frac{1}{1-x_0\alpha_{r-1}}x_1\right) \\
		&= -\frac{1}{\alpha_r} (y(-t_r)-y(\alpha_{r-1})),
		\end{align*}
		again by Proposition \ref{prop:zt-sh-formula1} it can be reduced to
		\begin{align*}
		&=\frac{1}{\alpha_r} \sum_{\sigma\in \mathfrak{S}_{r-1}} \phi\left(y(-t_r)  y(\alpha_{r-2}^\sigma)\cdots y(\alpha_{2}^\sigma)y(\alpha_{1}^\sigma)\right)\\
		&-\frac{1}{\alpha_r} \sum_{\sigma\in \mathfrak{S}_{r-1}} \phi\left(y(\alpha_{r-1}^\sigma)  y(\alpha_{r-2}^\sigma)\cdots y(\alpha_{2}^\sigma)y(\alpha_{1}^\sigma)\right)\\
		&=\frac{1}{\alpha_r} \sum_{a\in[r-1]}\phi\left( y(-t_r) S_{[r-1]\setminus\{a\}}(t_1,\ldots,t_{r-1})\right) -\frac{1}{\alpha_r} \phi\left( S_{[r-1]}(t_1,\ldots,t_{r-1})\right)\\
		&=\frac{1}{\alpha_r} \sum_{a\in[r-1]}\phi\left( y(-t_r) S_{[r]\setminus\{a,r\}}(t_1,\ldots,t_{r})\right) -\frac{1}{\alpha_r} \phi\left( S_{[r]\setminus\{r\}}(t_1,\ldots,t_{r})\right).
		\end{align*}
		Proposition \ref{prop:zt-phi-formula2} for the case $u=-t_r$ and $I=[r]\setminus\{a,r\}$ gives
		\begin{align*}
		&\phi\left(  y(-t_r) S_{[r]\setminus\{a,r\}}(t_1,\ldots,t_{r})\right)\\
		&= y(-t_r)S_{[r]\setminus\{a,r\}}(t_1,\ldots,t_r)-y(t_r)\shuffle S_{[r]\setminus\{a,r\}}(t_1,\ldots,t_r)\\
		&+\sum_{b\in [r-1]\setminus\{a\}}y(-t_b)\left( y(t_r) \shuffle S_{[r]\setminus\{a,b,r\}}(t_1,\ldots,t_r)\right)\\
		&=y(-t_r)S_{[r]\setminus\{a,r\}}(t_1,\ldots,t_r)- S_{[r]\setminus\{a\}}(t_1,\ldots,t_r)\\
		&+\sum_{b\in [r-1]\setminus\{a\}}y(-t_b) S_{[r]\setminus\{a,b\}}(t_1,\ldots,t_r)\\
		&=\sum_{b\in [r]\setminus\{a\}}y(-t_b) S_{[r]\setminus\{a,b\}}(t_1,\ldots,t_r) - S_{[r]\setminus\{a\}}(t_1,\ldots,t_r).
		\end{align*}
		Likewise, Proposition \ref{prop:zt-phi-formula1} for the case $I=[r]\setminus\{r\}$ shows
		\begin{align*}
		\phi\left( S_{[r]\setminus\{r\}}(t_1,\ldots,t_{r})\right)&= S_{[r]\setminus\{r\}}(t_1,\ldots,t_{r}) -\sum_{b\in [r]\setminus\{r\}} y(-t_b)S_{[r]\setminus\{b,r\}}(t_1,\ldots,t_r).
		\end{align*}
		Therefore we have
		\begin{align*}
		\phi \big( J(t_1,\ldots,t_r)\big)&=
		\frac{1}{\alpha_r} \sum_{a\in [r-1]} \left(\sum_{b\in [r]\setminus\{a\}}y(-t_b) S_{[r]\setminus\{a,b\}}(t_1,\ldots,t_r) - S_{[r]\setminus\{a\}}(t_1,\ldots,t_r) \right)\\
		&-\frac{1}{\alpha_r} \left(S_{[r]\setminus\{r\}}(t_1,\ldots,t_{r}) -\sum_{b\in [r]\setminus\{r\}} y(-t_b)S_{[r]\setminus\{b,r\}}(t_1,\ldots,t_r)\right)\\
		&=\frac{1}{\alpha_r} \sum_{a\in [r]} \left(\sum_{b\in [r]\setminus\{a\}}y(-t_b) S_{[r]\setminus\{a,b\}}(t_1,\ldots,t_r) -S_{[r]\setminus\{a\}}(t_1,\ldots,t_r)\right)\\
		&=\sum_{a=1}^{r} J(\underbrace{t_1,\ldots,t_{a-1}}_{a-1},\underbrace{t_{a+1},\ldots,t_r}_{r-a},t_a),
		\end{align*}
		which proves \eqref{eq:gen_identity}.
		We complete the proof.
	\end{proof}

	\subsection{Remark on the value $\Omega(\kk)$}
	In the proof of \eqref{eq:omega_smzv}, we actually proved the equality
	\begin{equation*}\label{eq:Omega_formula}
	\Omega(\kk) =(-1)^{k_r} \zeta^\shuffle_{\mathcal{S}}(x_0^{k_r}(y_{k_1}\shuffle \cdots \shuffle y_{k_{r-1}}))
	\end{equation*}
	for any index $\kk=(k_1,\ldots,k_r) \in \N^r$.
	Accordingly, relations of $\Omega(\kk)$ give rise to relations of $\zeta^\shuffle_{\mathcal{S}}(\kk)$ (without taking modulo $\zeta(2)$).
	For example, from \eqref{eq:omega_formula} and \cite[(5)]{Mordell}, we have
	\begin{equation}\label{eq:omega111}
	\Omega(\{1\}^k)= -k \zeta^{MT}(\{1\}^{k-1};1)=-k! \zeta(k),
	\end{equation}
	where $\{1\}^k$ means a sequence of $1$ repeated $k$ times.
	On the other hand, it follows that
	\[ \zeta^\shuffle_{\mathcal{S}}(x_0(\underbrace{y_{1}\shuffle \cdots \shuffle y_{1}}_{k-1})) = (k-1)!\zeta^\shuffle_{\mathcal{S}}(2,\{1\}^{k-2}) .\]
	Therefore, we obtain
	\begin{equation*}\label{eq:zeta_S211}
	\zeta_{\mathcal{S}}^\shuffle (2,\{1\}^{k-2})= k\zeta(k).
	\end{equation*}

	Similarly to Conjecture \ref{conj:MT}, we observed that all multiple zeta value up to weight 12 can be written as $\Q$-linear combinations of $\Omega(\kk)$'s.

	\begin{conj}
		The space $\mathcal{Z}$ is generated by the set $\{\Omega(\kk)\mid \kk :\mbox{index}\}$.
	\end{conj}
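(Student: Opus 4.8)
The plan is to turn the statement into a purely word-level problem and then reduce it to the already-formulated generation conjecture for Mordell--Tornheim values. First I would combine the closed formula \eqref{eq:omega_formula} of Theorem \ref{thm:SMZV} with Yamamoto's identity \eqref{eq:MT_mz} to write each limit value as the image under $\zeta$ of the explicit element
\[\Phi(\kk):=\sum_{a=1}^{r}(-1)^{k_a}x_0^{k_a}(y_{k_1}\shuffle\cdots\shuffle y_{k_{a-1}}\shuffle y_{k_{a+1}}\shuffle\cdots\shuffle y_{k_r})\ \in\ \mathfrak{h}^0,\]
so that $\Omega(\kk)=\zeta(\Phi(\kk))$. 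This recasts the conjecture as the assertion $\zeta\big(\sum_{r}\operatorname{span}_\Q\{\Phi(\kk)\}\big)=\mathcal{Z}$. Since every $\Omega(\kk)$ is manifestly a $\Q$-combination of Mordell--Tornheim values, one has the chain $\operatorname{span}_\Q\{\Omega(\kk)\}\subseteq\mathcal{Z}^{MT}\subseteq\mathcal{Z}$, where $\mathcal{Z}^{MT}$ is the span of all Mordell--Tornheim values, and the entire content lies in forcing the two end inclusions to be equalities.

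I would then split the task into (i) proving $\operatorname{span}_\Q\{\Omega(\kk)\}=\mathcal{Z}^{MT}$, and (ii) proving $\mathcal{Z}^{MT}=\mathcal{Z}$, which is exactly Conjecture \ref{conj:MT}. For (i) the idea is to invert the antisymmetrization $\kk\mapsto\Phi(\kk)$: fixing a weight and grouping the $\Phi(\kk)$ over all orderings of a given multiset of entries produces a linear system whose unknowns are the individual values $\zeta^{MT}(\,\cdot\,;l)$, and one hopes that after also inserting the standard Mordell--Tornheim relations this system becomes invertible. The depth-$2$ computation $\Omega(k_1,k_2)=\big((-1)^{k_1}+(-1)^{k_2}\big)\zeta(k_1+k_2)$ already illustrates both the phenomenon and the difficulty, since the odd-weight combinations collapse to zero and must be recovered from higher depth. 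For (ii) I would fix a conjectural basis of $\mathcal{Z}$ (for instance the Hoffman $\{2,3\}$ basis) and try to realize each basis element as a $\zeta$-image of some $x_0^{k_r}(y_{k_1}\shuffle\cdots\shuffle y_{k_{r-1}})$, using that Proposition \ref{prop:zt-sh-formula1} makes the reachable words completely explicit as sums over permutations of staircase products of $y(t)$.

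The hard part will be step (ii), and in truth the whole statement is only expected to be as tractable as Conjecture \ref{conj:MT}: the remark following Conjecture \ref{conj:MT} shows that the words $x_0^{k_r}(y_{k_1}\shuffle\cdots\shuffle y_{k_{r-1}})$ do \emph{not} span $\mathfrak{h}^0$ even after all shuffle relations are used, so any proof must genuinely exploit the (conjecturally understood but unproven) $\Q$-linear relations among multiple zeta values rather than algebraic identities of words alone. Consequently an unconditional proof appears out of reach with present methods; a realistic target is the conditional implication ``Conjecture \ref{conj:MT} $\Rightarrow$ the present conjecture'' obtained by completing step (i), supplemented by an unconditional check in low weights and for structured families such as $\Omega(\{1\}^k)=-k!\,\zeta(k)$ from \eqref{eq:omega111}, which already places every $\zeta(k)$ inside $\operatorname{span}_\Q\{\Omega(\kk)\}$.
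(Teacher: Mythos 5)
You should know at the outset that the statement you were asked to prove is stated as a conjecture in the paper, and the paper offers no proof of it: its only support is the sentence immediately preceding it, a Mathematica verification that every multiple zeta value up to weight $12$ lies in the $\Q$-span of the $\Omega(\kk)$'s. So there is no proof to measure your attempt against, and your judgment that an unconditional proof is out of reach with present methods is exactly the position the paper itself takes. Within that constraint, your proposal is sound where it is checkable: the identity $\Omega(\kk)=\zeta(\Phi(\kk))$ for your element $\Phi(\kk)$ does follow from \eqref{eq:omega_formula} together with Yamamoto's formula \eqref{eq:MT_mz}; the chain $\operatorname{span}_\Q\{\Omega(\kk)\}\subseteq\mathcal{Z}^{MT}\subseteq\mathcal{Z}$ is correct; the depth-two collapse $\Omega(k_1,k_2)=((-1)^{k_1}+(-1)^{k_2})\zeta(k_1+k_2)$ is right; and $\Omega(\{1\}^k)=-k!\,\zeta(k)$ from \eqref{eq:omega111} indeed places every Riemann zeta value in the span. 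What your proposal adds beyond the paper is structural: the paper presents Conjecture \ref{conj:MT} and the present conjecture as two parallel numerical observations and never relates them, whereas your step (i) --- that the $\Omega(\kk)$ span all of $\mathcal{Z}^{MT}$ --- would, if established, prove the two conjectures equivalent, which would be a genuine (and apparently open) contribution. Be aware, though, that step (i) is itself only a strategy in your write-up, not an argument: the ``linear system over orderings of a multiset'' whose invertibility you hope for is never exhibited, and your own depth-two example shows this system is genuinely degenerate, so recovering individual values $\zeta^{MT}(\,\cdot\,;l)$ from the antisymmetrized combinations must pass through higher depth in a way you have not made precise. In short, your assessment and all verifiable computations are correct and consistent with the paper, but neither you nor the paper proves the statement --- the paper because it does not attempt to, and you because both halves of your reduction (step (i) and Conjecture \ref{conj:MT}) remain unproven.
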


	We further remark that Ono, Seki and Yamamoto \cite{OSY} introduced the `$t$-adic' symmetric Mordell-Tornheim multiple zeta value as a counterpart of Kamano's finite Mordell-Tornheim multiple zeta value.
	For an index ${\bf k}=(k_1,\ldots,k_r,k_{r+1})$ it is defined by
	\[\zeta^{MT}_{\widehat{\mathcal{S}}}({\bf k};t) = \lim_{M\rightarrow \infty}\sum_{i=1}^{r+1}\zeta^{MT}_{\widehat{\mathcal{S}},M}(v_i;{\bf k};t),\]
	where for $1\le i\le r$ we set
	\[ \zeta^{MT}_{\widehat{\mathcal{S}},M}(v_i;{\bf k};t)=\sum_{\substack{m_1,\ldots,m_{i-1}>0\\m_{i+1},\ldots,m_{r+1}>0\\M_{i-1}+M_{i+1,r+1}<M}} \frac{1}{(t-M_{i-1}-M_{i+1,r+1})^{k_i}(t-m_{r+1})^{k_{r+1}}}\prod_{\substack{j=1\\j\neq i}}^{r} \frac{1}{m_j^{k_j}} \]
	with $M_0=0$, $M_i=m_1+\cdots+m_i $ for $1\le i\le r+1$, $M_{i,j}=m_i+m_{i+1}+\cdots+m_j$ for $i\le j$ and $M_{i,j}=0$ if $i>j$, and also let
	\[ \zeta^{MT}_{\widehat{\mathcal{S}},M}(v_{r+1};{\bf k};t)=\sum_{\substack{m_1,\ldots,m_r>0\\M_r<M}} \frac{1}{m_1^{k_1}\cdots m_r^{k_r}M_r^{k_{r+1}}}.\]
	Letting $t\rightarrow 0$ (or taking the constant term), we see that
	\begin{align*}
	\zeta^{MT}_{\widehat{\mathcal{S}}}({\bf k};0)=(-1)^{k_{r+1}} \sum_{a=1}^{r+1}(-1)^{k_a} \zeta^{MT}(k_1,\ldots,k_{a-1},k_{a+1},\ldots,k_{r+1};k_a).
	\end{align*}
	Combining this with \eqref{eq:omega_formula}, we obtain another expression of $\Omega(\kk)$ as follows:
	\[ \Omega(k_1,\ldots,k_r)= \lim_{M\rightarrow \infty}\sum_{i=1}^{r}(-1)^{k_r}\zeta^{MT}_{\widehat{\mathcal{S}},M}(v_i;k_1,\ldots,k_r;0).\]

	\newcommand{\Cyc}{\mathcal{R}}
	\section{Further remarks}

	\subsection{Relations of $\omega_n(\kk;\zeta_n)$}
	In this subsection, we consider relations among the values $\omega_n(\kk;\zeta_n)$ which are valid for all $n$, as well as its application to relations among finite and symmetric multiple omega values.

For that purpose we consider the $\Q$-algebra
	\[\Cyc=\prod_{n \geq 1} \Q(\zeta_n).\]
	In this $\Q$-algebra, we will count the number of linearly independent relations of the form
	\begin{equation}\label{eq:rel_w}
	\sum_{\substack{\kk:{\rm index}\\ m\ge0}} a_{\kk}^{(m)} (1-\zeta_n )^m \omega_n(\kk;\zeta_n) =0 \quad (a_{\kk}^{(m)} \in \Q)
	\end{equation}
	which holds for all $n$. It is apparent that the relation \eqref{eq:rel_w} induces relations
	\[ \sum_{\kk:{\rm index}} a_{\kk}^{(0)}  \omega_{\mathcal{A}} (\kk) =0 \quad \mbox{and} \quad \sum_{\kk:{\rm index}} a_{\kk}^{(0)}  \Omega (\kk) =0,\]
	by applying Theorems \ref{thm:FMZV} and \ref{thm:SMZV}.
	Hence, finding relations of the form \eqref{eq:rel_w} will be a first step toward to capture all relations among finite and symmetric multiple omega values (note that the coefficient $a_{\kk}^{(m)}$ could be a polynomial in $n$ over $\Q$, but we get rid of such relations below).
	For an index $\kk$, we then consider a universal object
	\begin{align*}
	\omega(\kk) = \big( \omega_n(\kk ; e^{\frac{2\pi i}{n}}) \big)_{n\geq 1} \in \Cyc\,,
	\end{align*}
	which we call a cyclotomic multiple omega value.

	We give a family of relations among the cyclotomic multiple omega values.
	Set $\zeta = \big( \zeta_n \big)_{n\geq 1} \in \Cyc$.

	\begin{thm}\label{thm:cycomegarel}
		Suppose that $k_{1}, \ldots , k_{r} \ge 2$.
		Then it holds that
		\begin{align*}
		\sum_{j=1}^{r}
		\sum_{\substack{l_{1}, \ldots , l_{j-1} \ge 2 \\ l_{j}, \ldots , l_{r} \ge 1}} &
		\left\{
		\prod_{p=1}^{j-1}\binom{k_{p}-2}{l_{p}-2}
		\right\}
		\binom{k_{j}-2}{l_{j}-1}\,
		\left\{
		\prod_{p=j+1}^{r}\binom{k_{p}-1}{l_{p}-1}
		\right\} \\
		&\qquad {}\times (1-\zeta)^{\sum_{p=1}^{r}(k_{p}-l_{p})-1}
		\omega(l_{1}, \ldots , l_{r})=0.
		\nonumber
	\end{align*}
	\end{thm}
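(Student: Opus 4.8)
The plan is to prove the identity componentwise in $\Cyc=\prod_{n\ge1}\Q(\zeta_n)$: for each fixed $n$ I fix $q=\zeta_n=e^{2\pi i/n}$ and show that the $n$-th component of the left-hand side vanishes. The structural feature I would exploit is that, for each fixed $j$, the coefficient of $\omega(l_1,\dots,l_r)$ factors as a product over the positions $p$, while the $n$-th component $\omega_n(\el;q)=\sum_{m_1+\cdots+m_r=n}\prod_pF_{l_p}(m_p)$ itself factors over $p$ once the outer constraint $m_1+\cdots+m_r=n$ is fixed. Pulling out the common factor $(1-q)^{-1}$, the $j$-th summand thus becomes $\sum_{m_1+\cdots+m_r=n}\prod_p\bigl(\sum_{l_p}c^{(p)}_{l_p}F_{l_p}(m_p)\bigr)$, which reduces everything to evaluating, for a single index, the three kinds of binomial sums occurring before, at, and after the distinguished position $j$.

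The key computation is to collapse each single-index sum by means of the elementary identity $(1-q)[m]=1-q^m$, equivalently $\tfrac{q^m}{[m]}+(1-q)=\tfrac1{[m]}$. Writing $F_l(m)=q^{(l-1)m}/[m]^l$ and applying the binomial theorem, I expect to obtain
\[ \sum_{l\ge2}\binom{k_p-2}{l-2}(1-q)^{k_p-l}F_l(m)=\frac{q^m}{[m]^{k_p}},\qquad \sum_{l\ge1}\binom{k_p-1}{l-1}(1-q)^{k_p-l}F_l(m)=\frac{1}{[m]^{k_p}} \]
for the positions $p<j$ and $p>j$ respectively, and $\sum_{l\ge1}\binom{k_j-2}{l-1}(1-q)^{k_j-l}F_l(m)=\tfrac{1-q}{[m]^{k_j-1}}$ for the distinguished position $p=j$. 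This is exactly where the hypothesis $k_p\ge2$ is used: it guarantees that the upper binomial indices $k_p-2$ are non-negative, so that these are genuine finite binomial expansions collapsing to a power of $\tfrac1{[m]}$. Cancelling the factor $1-q$ from the $p=j$ term against the common $(1-q)^{-1}$, the $j$-th summand should simplify to
\[ S_j=\sum_{m_1+\cdots+m_r=n}\Bigl(\prod_{p=1}^r\frac1{[m_p]^{k_p}}\Bigr)\,[m_j]\prod_{p<j}q^{m_p}. \]

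Finally I would sum over $j$ and telescope. Using $(1-q)[m_j]=1-q^{m_j}$,
\[ \sum_{j=1}^r[m_j]\prod_{p<j}q^{m_p}=\frac1{1-q}\sum_{j=1}^r\Bigl(\prod_{p<j}q^{m_p}-\prod_{p\le j}q^{m_p}\Bigr)=\frac{1-q^{m_1+\cdots+m_r}}{1-q}=\frac{1-q^n}{1-q}=[n], \]
which is independent of the summation variables. Hence $\sum_{j=1}^rS_j=[n]\cdot\sum_{m_1+\cdots+m_r=n}\prod_p[m_p]^{-k_p}$, and since $q=\zeta_n$ satisfies $q^n=1$ we have $[n]\big|_{q=\zeta_n}=0$, so the entire $n$-th component vanishes; as $n$ is arbitrary the identity holds in $\Cyc$. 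I expect the only real obstacle to be spotting this final mechanism: recognizing that the elaborate binomial coefficients are engineered precisely so that each $S_j$ collapses to the clean weight-shifted sum above, and that summing over $j$ telescopes to the factor $[n]$, which is exactly what vanishes at an $n$-th root of unity. Once this is seen, the remaining steps are routine binomial bookkeeping.
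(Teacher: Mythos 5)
Your proof is correct and is essentially the paper's own argument run in reverse: the paper starts from the telescoping identity $\sum_{j=1}^{r}q^{m_{1}+\cdots+m_{j-1}}[m_{j}]=[m_1+\cdots+m_r]=[n]=0$ at $q=\zeta_n$ and then expands $q^m/[m]^{k}$, $1/[m]^{k-1}$, $1/[m]^{k}$ in terms of the $F_l(m)$ via exactly your binomial identities, whereas you collapse the binomial sums first and telescope last. The two ingredients (the binomial collapse using $(1-q)+q^m/[m]=1/[m]$, and the vanishing of $[n]$ at a primitive $n$-th root of unity) are identical, so this is the same proof.
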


	\begin{proof} We show that  for all $n\geq 1$ and primitive roots of unity $\zeta_n$ we have
		\begin{align}
		\sum_{j=1}^{r}
		\sum_{\substack{l_{1}, \ldots , l_{j-1} \ge 2 \\ l_{j}, \ldots , l_{r} \ge 1}} &
		\left\{
		\prod_{p=1}^{j-1}\binom{k_{p}-2}{l_{p}-2}
		\right\}
		\binom{k_{j}-2}{l_{j}-1}\,
		\left\{
		\prod_{p=j+1}^{r}\binom{k_{p}-1}{l_{p}-1}
		\right\}
		\label{eq:sym-sum-rel} \\
		&\qquad {}\times (1-\zeta_{n})^{\sum_{p=1}^{r}(k_{p}-l_{p})-1}
		\omega_{n}(l_{1}, \ldots , l_{r}; \zeta_{n})=0.
		\nonumber
	\end{align}
		For $m_{1}, \ldots , m_{r} \ge 1$ it holds that
		\begin{align*}
		\sum_{j=1}^{r}q^{m_{1}+\cdots +m_{j-1}}[m_{j}]=[m_{1}+\cdots +m_{r}].
		\end{align*}
		Suppose that $\sum_{j=1}^{r}m_{j}=n$.
		Divide the both sides with $\prod_{p=1}^{r}[m_{p}]^{k_{p}}$ and
		set $q=\zeta_{n}$.
		Then we have
		\begin{align*}
		\sum_{j=1}^{r}\prod_{p=1}^{j-1}\frac{q^{m_{p}}}{[m_{p}]^{k_{p}}}
		\frac{1}{[m_{j}]^{k_{j}-1}}
		\prod_{p=j+1}^{r}\frac{1}{[m_{p}]^{k_{p}}}\biggl|_{q=\zeta_{n}}=0.
		\end{align*}
		Now the desired equality follows from
		\begin{align*}
		&
		\frac{q^{m}}{[m]^{k}}=\sum_{l \ge 2}\binom{k-2}{l-2}(1-q)^{k-l}F_{l}(m) \qquad (k \ge 2),  \\
		&
		\frac{1}{[m]^{k}}=\sum_{l \ge 1}\binom{k-1}{l-1}(1-q)^{k-l}F_{l}(m) \qquad (k \ge 1).
		\end{align*}
		We complete the proof.
	\end{proof}
	For example, taking $k_1=k_2=\dots=k_r=2$ we obtain for all $r\geq 2$
	\begin{align}\label{eq:222case}
	\sum_{j=1}^r \binom{r}{j} (1-\zeta)^{j-1} \omega(\{2\}^{r-j},\{1\}^j) = 0\,.
	\end{align}

	Using Theorems \ref{thm:FMZV} and \ref{thm:SMZV}, we obtain the following.

	\begin{cor}\label{cor:valance_formula}
		Suppose that $k_{1}, \ldots , k_{r} \ge 2$.
		Then we have
		\begin{align}
		\notag \sum_{j=1}^{r}\omega_{\mathcal{A}}(k_{1}, \ldots , k_{j}-1, \ldots , k_{r})=0  ,\\
		\label{eq:rel_Omega}\sum_{j=1}^{r}\Omega(k_{1}, \ldots , k_{j}-1, \ldots , k_{r})=0.
		\end{align}
	\end{cor}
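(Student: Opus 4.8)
The plan is to extract from the identity of Theorem \ref{thm:cycomegarel} the contribution of the lowest power of $(1-\zeta)$, and then feed the resulting relation among cyclotomic multiple omega values through the two specialization procedures furnished by Theorems \ref{thm:FMZV} and \ref{thm:SMZV}.

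First I would analyze the exponent $\sum_{p=1}^{r}(k_p-l_p)-1$ occurring in Theorem \ref{thm:cycomegarel}. Under the summation constraints the binomial $\binom{k_j-2}{l_j-1}$ is nonzero only for $1\le l_j\le k_j-1$, so that $k_j-l_j\ge 1$, while each remaining factor forces $k_p-l_p\ge 0$. Hence $\sum_{p=1}^{r}(k_p-l_p)\ge 1$, so $(1-\zeta)^0$ is the lowest power that appears, and it is attained for a given $j$ \emph{only} at the single tuple with $k_j-l_j=1$ and $l_p=k_p$ for every $p\ne j$. At this configuration all three products of binomials collapse to $1$ (each factor is of the form $\binom{N}{N}$), and the index $(l_1,\ldots,l_r)$ is exactly $(k_1,\ldots,k_j-1,\ldots,k_r)$. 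Thus the coefficient of $(1-\zeta)^0$ in the identity is precisely $\sum_{j=1}^{r}\omega(k_1,\ldots,k_j-1,\ldots,k_r)$.

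Next I would pass to the two settings. For the finite statement I would restrict the all-$n$ identity \eqref{eq:sym-sum-rel} to $n=p$ prime and reduce modulo the prime ideal $(1-\zeta_p)\Z[\zeta_p]$: every summand carrying a positive power of $(1-\zeta_p)$ vanishes, and by Theorem \ref{thm:FMZV} each surviving $\omega_p(\kk;\zeta_p)$ becomes $\omega_{\mathcal{A}}(\kk)$, which yields the first asserted identity. For the symmetric statement I would instead specialize $\zeta_n=e^{2\pi i/n}$ and let $n\to\infty$; since $1-\zeta_n\to 0$ while each $\omega_n(\kk;e^{2\pi i/n})$ converges to the finite limit $\Omega(\kk)$ by Theorem \ref{thm:SMZV}, all higher-order terms die in the limit and only the $(1-\zeta)^0$ contribution survives, giving \eqref{eq:rel_Omega}.

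The computation is essentially routine once the lowest-order extraction is arranged, so the only point requiring genuine care is the combinatorial claim that for each $j$ a \emph{single} tuple $(l_1,\ldots,l_r)$ contributes to the bottom degree and that the attached product of binomials equals $1$; it is this collapse that makes the limiting coefficient the clean sum over single decrements rather than a more intricate $\Q$-linear combination of multiple omega values.
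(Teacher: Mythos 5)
Your proposal is correct and follows essentially the same route as the paper: extract the $(1-\zeta)^0$ coefficient of the identity in Theorem \ref{thm:cycomegarel} (noting that for each $j$ only the tuple $l_j=k_j-1$, $l_p=k_p$ for $p\neq j$ contributes, with all binomials equal to $1$), then reduce modulo $(1-\zeta_p)$ via Theorem \ref{thm:FMZV} for the finite case and take the limit $n\to\infty$ via Theorem \ref{thm:SMZV} for the symmetric case. Your write-up actually supplies the combinatorial bookkeeping that the paper leaves implicit, and correctly states the limit as $n\to\infty$ (the paper's ``$n\to 0$'' is a typo).
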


	\begin{proof}
		The first equation is an immediate from  \eqref{eq:sym-sum-rel} and Theorem \ref{thm:FMZV}.
		For the second equation, set $\zeta_{n}=e^{2\pi i/n}$ in \eqref{eq:sym-sum-rel},
		and take the limit $n \to 0$.
		Then the terms which satisfy $l_{p}=k_{p} \, (p\not=j)$ and $l_{j}=k_{j}-1$ only remain,
		and by Theorem \ref{thm:SMZV} we get the desired formula.
	\end{proof}

	Remark that the relation \eqref{eq:rel_Omega} can be also deduced from the following relations of the Mordell-Tornheim multiple zeta values.
	For $k_1,\ldots,k_r,l\ge2$ we have
	\begin{align*}
	\zeta^{MT}(k_1,\ldots,k_r;l-1)&=\sum_{m_1,\ldots,m_r\ge1} \frac{m_1+\cdots+m_r}{m_1^{k_1}\cdots m_r^{k_r} (m_1+\cdots+m_r)^l}\\
	&=\sum_{a=1}^r \zeta^{MT}(k_1,\ldots,k_a-1,\ldots,k_r;l),
	\end{align*}
	which is found in the literature (see e.g. the proof of \cite[Proposition 4.16]{Onodera}).

	\subsection{Observation on cyclotomic multiple omega values}

	In this subsection, we work on the $\Q$-vector subspace $\MTW_{k}$ of $\Cyc$ defined for $k\ge1$ by
	\begin{align*}
	\MTW_{k} =\big\langle (1-\zeta)^m \omega(k_1,\dots,k_r) \mid 0 \leq m < k,\, 2\leq r \leq k,\, k_1+\dots + k_r + m = k \big\rangle_{\Q} .
	\end{align*}
	In the following we discuss the relations among the generators of $\MTW_k$ and its dimension. From Theorems \ref{thm:FMZV} and \ref{thm:SMZV}, one obtains two well-defined $\Q$-linear maps $\varphi^{\bullet}_k : \MTW_{k} \rightarrow \MTWB_k $ given for $\bullet \in \{\mathcal{A}, \mathcal{S} \}$ and each generator by
	\begin{align*}
	\varphi_k^\bullet\big((1-\zeta)^m \omega(\kk) \big)=\begin{cases} \omega_{\bullet}(\kk) & \mbox{if}\ m=0 \\
	0 & \mbox{if}\ m>0
	\end{cases}\,.
	\end{align*}
	By definition these maps are surjective, so we get
	\begin{center}
		\begin{tikzcd}
		& \MTW_{k} \arrow[twoheadrightarrow]{dr}{\varphi^{\mathcal{S}}_k} \arrow[twoheadrightarrow]{dl}[swap]{\varphi^{\mathcal{A}}_k}  \\
		\MTWA_{k} \arrow[dotted]{rr}{\varphi_k} && \MTWS_{k}
		\end{tikzcd},
	\end{center}
	where the map $\varphi_k: \MTWA_k\rightarrow \MTWS_k$ denotes the conjectured isomorphism of Conjecture \ref{conj:main}.
	This picture suggests that $\ker \varphi^{\mathcal{A}}_k  \overset{?}{=} \ker \varphi^{\mathcal{S}}_k$. Clearly, we have $(1-\zeta) \MTW_{k-1} \subset \ker \varphi^{\bullet}_k$ for $\bullet \in \{\mathcal{A}, \mathcal{S} \}$, but there are also non-trivial elements in $\ker \varphi^{\bullet}_k$, since Theorem \ref{thm:cycomegarel} gives relations among elements in $\MTW_{k}$. Numerically get the following relations.\\
	weight $3$:
	\begin{align*}
	\omega(2,1) &=-\frac{1}{2}(1-\zeta)\omega(1,1) \,.
	\end{align*}
	weight $4$:
	\begin{align*}
	\omega(3,1) &\stackrel{?}{=} \omega(2,1,1) + \frac{1}{2} (1-\zeta)\omega(1,1,1)+\frac{1}{4} (1-\zeta)^2\omega(1,1) \,,\\
	\omega(2,2) &\stackrel{?}{=} -\omega(2,1,1) - \frac{1}{2} (1-\zeta)\omega(1,1,1) +\frac{1}{4} (1-\zeta)^2\omega(1,1)\,.
	\end{align*}
	weight $5$:
	\begin{align*}
	\omega(4,1) &\stackrel{?}{=} -\frac{3}{2}(1-\zeta)\omega(2,1,1) - \frac{3}{4} (1-\zeta)^2\omega(1,1,1)-\frac{1}{8} (1-\zeta)^3\omega(1,1) \,,\\
	\omega(3,2) &\stackrel{?}{=} \frac{1}{2}(1-\zeta)\omega(2,1,1) + \frac{1}{4} (1-\zeta)^2\omega(1,1,1)-\frac{1}{8}(1-\zeta)^3 \omega(1,1)\,,\\
	\omega(2,2,1) &= -(1-\zeta)\omega(2,1,1) - \frac{1}{3} (1-\zeta)^2\omega(1,1,1) \,.
	\end{align*}
Notice that the first and the last relation are consequences of \eqref{eq:222case}.
	Remember that $\omega(k_{\sigma(1)},\ldots,k_{\sigma(r)})=\omega(k_1,\ldots,k_r)$ holds for any permutation $\sigma\in \mathfrak{S}_r$.
	The above relations conjecturally give all relations in each weight. Doing numerical calculation until weight $11$, we get the following table:
	\begin{figure}[h!]
		\begin{tabular}{|c|ccccccccccc|}
			\hline
			$k$ & 1 & 2 & 3 &  4&  5& 6 &7  &8  &9  & 10  & 11 \\ [2pt] \hline
			$\dim \MTW_{k}  \overset{?}{=}$ &0  & 1 & 2 & 4 & 7 & 12 & 19 & 30 & 45 & 68 & 99    \\[2pt] \hline
			$\dim \MTW_{k} \slash (1-\zeta)\MTW_{k-1}  \overset{?}{=}$       & 0 &1  & 1 & 2 &3  &  5&  7& 11 & 15 &23 & 31   \\[2pt] \hline
		\end{tabular}
	\end{figure}

	Notice that the numbers in the second row are given by the number of partitions of $k-2$ until weight $k=9$. Until this weight it seems that the elements of the form $	(1-\zeta)^m \omega(k_1,\dots,k_r,1,1) $ with $m\geq 0,\,r\geq 0,\, k_1\geq \ldots \geq k_r\geq 1$ and $k_1+\dots+k_r+m= k-2$ form a basis. But starting in weight $10$ there seem to be elements, which are not linear combinations of these.

%

	In contrast to the $z_n(\kk ;q)$ introduced in \cite{BTT18} (see \eqref{eq:def_znq}) the algebraic structure of the cyclotomic multiple omega values is not known yet. We are  expecting that $\MTW_{k_1} \MTW_{k_2} \subset \MTW_{k_1 + k_2}$ holds for $k_1,k_2\geq 2$. Namely, the space $\MTW=\sum_{k\ge0} \MTW_k$ forms a $\Q$-subalgebra of $\Cyc$, where $\MTW_0=\Q$. For example, one can check numerically that
	\begin{align}\begin{split}\label{eq:cycomegaproducts}
		\omega(1,1)^2 \stackrel{?}{=} &-5 \omega(2,1,1) - \frac{5}{2} (1-\zeta)\omega(1,1,1) + \frac{1}{4} (1-\zeta)^2 	\omega(1,1)\,,\\
		\omega(1,1) \omega(1,1,1) \stackrel{?}{=} &-2 \omega(2,1,1,1)-3 \omega(3,1,1) - (1-\zeta) \omega(1,1,1,1) \\
		&- 3 (1-\zeta)\omega(2,1,1) - \frac{1}{3} (1-\zeta)^2 \omega(1,1,1) \,.
\end{split}
	\end{align}

 Since $\varphi^\bullet_{\wt(\kk)+\wt({\bf l})}(\omega(\kk) \omega({\bf l}))= \omega_\bullet(\kk)\omega_\bullet({\bf l}) $ and $\omega_{\bullet}(1,1)=0$  for $\bullet\in\{\mathcal{A},\mathcal{S}\}$ (see Section \ref{subsec:specialval} below), the equations \eqref{eq:cycomegaproducts} would imply the relations
\begin{align*}
\omega_{\bullet}(2,1,1)\stackrel{?}{=}0,\qquad   2\omega_{\bullet}(2,1,1,1) + 3 \omega_{\bullet}(3,1,1)\stackrel{?}{=}0\,.
\end{align*}
	\subsection{Special values}\label{subsec:specialval}
	In this subsection, we make a list of known values of finite and symmetric multiple omega values.
	\begin{enumerate}
	\item
	From Theorem \ref{thm:omega_fsmzv}, for $\bullet\in\{\mathcal{A},\mathcal{S}\}$ and $k_1,k_2\ge1$ we obtain $ \omega_{\bullet}(k_1,k_2) =(-1)^{k_2} \zeta_{\bullet}(k_1+k_2)$. Since $\zeta_\bullet(k)=0$, we see that
	\[\omega_{\bullet}(k_1,k_2)=0.\]
	\item
	Letting $k_1=\cdots=k_r=2$ in Corollary \ref{cor:valance_formula}, we also have
	\[ \omega_{\bullet}(\{2\}^{r-1},1)=0.\]
	\item
	By \eqref{eq:omega111}, for $k\ge2$ we see that
	\[ \omega_{\mathcal{S}} (\{1\}^k)\equiv  -k! \zeta(k) \mod \zeta(2)\mathcal{Z}.\]
	This also follows from \eqref{eq:omega_smzv} and a special case of the sum formula for symmetric multiple zeta values by Murahara \cite[Theorem 1.2]{Murahara}.
	Likewise, from \eqref{eq:omega_fmzv} and a special case of the sum formula for finite multiple zeta values by Saito and Wakabayashi \cite{SaitoWakabayashi}, for $k\ge2$ we have
	\[ \omega_{\mathcal{A}} (\{1\}^k)=  -k! Z(k) ,\]
	where $Z(k)$ is the `true' analogue of $\zeta(k)$ in $\mathcal{A}$ defined for $k\ge1$ by
	\[ Z(k)=\left( \frac{B_{p-k}}{k} \mod p \right)_p \in \mathcal{A}\qquad (B_{p-k}:\mbox{Bernoulli number}).\]
	So far, we were not able to find explicit formulas for $\omega_{\bullet}(\{a\}^k)$ (note that $\zeta_{\bullet}(\{a\}^k)=0$ for any $a,k\in \N$ and $\bullet\in\{\mathcal{A},\mathcal{S}\}$).
	\end{enumerate}



\end{document}